\documentclass{article}

% ready for submission
\usepackage[final, nonatbib]{neurips_2023}

% packages from neurips template
\usepackage[utf8]{inputenc} % allow utf-8 input
\usepackage[T1]{fontenc}    % use 8-bit T1 fonts
\usepackage[colorlinks=true, citecolor=blue]{hyperref}  % hyperlinks
\usepackage{url}            % simple URL typesetting
\usepackage{booktabs}       % professional-quality tables
\usepackage{amsfonts}       % blackboard math symbols
\usepackage{nicefrac}       % compact symbols for 1/2, etc.
\usepackage{microtype}      % microtypography
\usepackage{xcolor}         % colors

% my packages
% Packages
%% Formatting
\usepackage{xspace} % handles spacing (in i.i.d. for example)
%% Writing
\usepackage{mathtools, amsthm, amssymb} % loads math packages
\usepackage{thmtools, thm-restate} % allows to restate a theorem later for proof
\usepackage{bbm} % allows writing the indicator function, use \mathbbm{1}
\usepackage{catchfilebetweentags} % allows to input only sections of a tex file, causes etex package warning (just ignore it)
\usepackage{mathrsfs} % cursive math font (different from /mathcal that comes from amssymb)
\usepackage[inline]{enumitem} % allows more customized enumeration
\usepackage[giveninits=true, style=alphabetic, natbib=true, maxbibnames=99, maxcitenames=99, maxalphanames=99]{biblatex}

% Statements
%% Plain
\theoremstyle{plain} % the styles are useful to distinguish the type of statement made
\newtheorem{lemma}{Lemma}%[section] % the argument [section] resets the counter of lemma at every new section.
\newtheorem{proposition}{Proposition}
\newtheorem{theorem}{Theorem} 
\newtheorem{corollary}{Corollary}
%% Definition
\theoremstyle{definition}
%[section]

%% Remark
\theoremstyle{remark}
%[section]

% Commands & Delimiters
%% General Delimiters
\let\brack\undefined % undefines brack so I can define it in next line
\DeclarePairedDelimiter{\brack}{\lbrack}{\rbrack} % PairedDelimiters, when called with a * (i.e. in this case, \brack*{}) automatically adjust the size of the delimiters to what is inside of them.
\let\brace\undefined % undefines brace so I can define it in next line
\DeclarePairedDelimiter{\brace}{\lbrace}{\rbrace}
\DeclarePairedDelimiter{\paren}{\lparen}{\rparen}
\DeclarePairedDelimiter{\abs}{\lvert}{\rvert}
%% General Commands
\newcommand{\defeq}{\vcentcolon=}
\newcommand{\eqdef}{=\vcentcolon}
\newcommand{\eps}{\varepsilon}
%% Sets
\newcommand{\N}{\mathbb{N}}
\newcommand{\R}{\mathbb{R}}

\renewcommand{\S}{\mathbb{S}} % sphere

 % the star in DeclareMathOperator* allows subscripts to go underneath the operator's name and not next to it
\DeclareMathOperator*{\argmin}{\arg\!\min}
%% Functions

%% Linear Algebra
\DeclarePairedDelimiter{\norm}{\lVert}{\rVert}
\DeclarePairedDelimiterX{\inp}[2]{\langle}{\rangle}{#1, #2} % The X in declarepairedelimiterx allows us to format the two arguments, in this case, I want a comma between them and I don't want to type it everytime
\DeclareMathOperator{\Tr}{Tr}
\DeclareMathOperator{\Span}{span}

\DeclareMathOperator{\rowrank}{rowrank}
\DeclareMathOperator{\Ima}{Im}
%% Probability
\DeclareMathOperator{\Prob}{P}
\DeclareMathOperator{\Exp}{E}

\newcommand{\iid}{i.i.d.\@\xspace}
%% Algorithms
\usepackage{algorithm}
\usepackage{algpseudocode}

% Paper specific commands
\DeclareMathOperator{\sgn}{sgn}
\newcommand{\erm}{\hat{w}_{p}}
\newcommand{\opt}{w_{p}^{*}}

\newcommand{\hessian}{H_{p}}

\newcommand{\hessnorm}[1]{\norm{#1}_{H_{p}}}
\newcommand{\invhessnorm}[1]{\norm{#1}_{H_{p}^{-1}}}
\newcommand{\emphessnorm}[1]{\norm{#1}_{H_{p, n}}}

\newcommand{\deff}{\Exp\brack*{\invhessnorm{\nabla \ell_{p}(\inp{\opt}{X} - Y)}^{2}}}
\newcommand{\sigmap}{\sigma_{p}^{2}}
\newcommand{\emprisk}[1]{R_{p,n}(#1)}
\newcommand{\grademprisk}[1]{\nabla R_{p,n}(#1)}
\newcommand{\risk}[1]{R_{p}(#1)}

\newcommand{\gradlossi}{\nabla \lp(\inp{\opt}{X_i} - Y_i)}
\newcommand{\gradlossj}{\nabla \lp(\inp{\opt}{X_j} - Y_j)}
\newcommand{\gradloss}{\nabla \lp(\inp{\opt}{X} - Y)}
\newcommand{\hessloss}{\nabla^{2}\lp(\opterror)}
\newcommand{\lp}{\ell_{p}}
\newcommand{\error}{\inp{w}{X} - Y}
\newcommand{\opterror}{\inp{\opt}{X} - Y}
\newcommand{\errori}{\inp{w}{X_i} - Y_i}
\newcommand{\opterrori}{\inp{\opt}{X_i} - Y_i}

\addbibresource{main.bib}

\title{Optimal Excess Risk Bounds for Empirical Risk Minimization on $p$-Norm Linear Regression}

\author{
  Ayoub El Hanchi \\
  University of Toronto \& Vector Institute\\
  \texttt{aelhan@cs.toronto.edu}
    \And
  Murat A. Erdogdu \\
  University of Toronto \& Vector Institute \\
  \texttt{erdogdu@cs.toronto.edu}
}

\begin{document}

\maketitle

\begin{abstract}
    We study the performance of empirical risk minimization on the $p$-norm linear regression problem for $p \in (1, \infty)$. We show that, in the realizable case, under no moment assumptions, and up to a distribution-dependent constant, $O(d)$ samples are enough to exactly recover the target. Otherwise, for $p \in [2, \infty)$, and under weak moment assumptions on the target and the covariates, we prove a high probability excess risk bound on the empirical risk minimizer whose leading term matches, up to a constant that depends only on $p$, the asymptotically exact rate. We extend this result to the case $p \in (1, 2)$ under mild assumptions that guarantee the existence of the Hessian of the risk at its minimizer.

\end{abstract}

\section{Introduction}
\label{sec:introduction}
% What is the problem ?
Real-valued linear prediction is a fundamental problem in machine learning. Traditionally, the square loss has been the default choice for this problem. The performance of empirical risk minimization (ERM) on linear regression under the square loss, as measured by the excess risk, has been studied extensively both from an asymptotic \citep{whiteMaximumLikelihoodEstimation1982,lehmannTheoryPointEstimation2006,vaartAsymptoticStatistics1998,} and a non-asymptotic point of view \citep{audibertRobustLinearLeast2011,hsuRandomDesignAnalysis2012,oliveiraLowerTailRandom2016,lecuePerformanceEmpiricalRisk2016,saumardOptimalityEmpiricalRisk2018,mourtadaExactMinimaxRisk2022}. An achievement of the last decade has been the development of non-asymptotic excess risk bounds for ERM on this problem under weak assumptions, and which match, up to constant factors, the asymptotically exact rate.

In this paper, we consider the more general family of $p$-th power losses $t \mapsto \abs{t}^{p}$ for a user-chosen $p \in (1, \infty)$. Under mild assumptions, the classical asymptotic theory can still be applied to ERM under these losses, yielding the asymptotic distribution of the excess risk. However, to the best of our knowledge, the problem of deriving non-asymptotic excess risk bounds for ERM for $p \in (1, \infty) \setminus \brace{2}$ remains open, and, as we discuss below, resists the application of standard tools from the literature. 

% Why do I care ?
Our motivation for extending the case $p = 2$ to $p \in (1, \infty)$ is twofold. Firstly, the freedom in the choice of $p$ allows us to better capture our prediction goals. For example, we might only care about how accurate our prediction is on average, in which case, the choice $p=1$ is appropriate. At the other extreme, we might insist that we do as well as possible on a subset of inputs of probability $1$, in which case the choice $p = \infty$ is best. A choice of $p \in (1, \infty)$ therefore allows us to interpolate between these two extremes, with the case $p=2$ offering a balanced choice. Secondly, different choices of $p$ have complementary qualities. On the one hand, small values of $p$ allow us to operate with weak moment assumptions, making them applicable in more general cases. On the other, larger values of $p$ yield predictions whose optimality is less sensitive to changes in the underlying distribution: for $p = \infty$, the best predictor depends only on the support of this distribution.

% Why is it difficult ?
To sharpen our discussion, let us briefly formalize our problem. There is an input random vector $X \in \R^{d}$ and output random variable $Y \in \R$, and we are provided with $n$ \iid samples $(X_i, Y_i)_{i=1}^{n}$. We select our set of predictors to be the class of linear functions $\brace*{x \mapsto \inp{w}{x} \mid w \in \R^{d}}$, and choose a value $p \in (1, \infty)$ with the corresponding loss $\ell_{p}(t) \defeq\abs{t}^{p} / \brack{p(p-1)}$. Using this loss, we define the associated risk and empirical risk, respectively, by
\begin{align*}
    R_{p}(w) &\defeq \Exp\brack{\ell_{p}(\inp{w}{X} - Y)}, & R_{p, n}(w) \defeq \frac{1}{n} \sum_{i=1}^{n} \ell_{p}(\inp{w}{X_i} - Y_{i}).
\end{align*}
We perform empirical risk minimization $\hat{w}_{p} \in \argmin_{w \in \R^{d}} R_{p, n}(w)$, and our goal is to derive a high probability bound on the excess risk $R_p(\hat{w}_{p}) - R_{p}(w_{p}^{*})$, where $w_{p}^{*}$ is the risk minimizer. For efficient algorithms for computing an empirical risk minimizer $\erm$, we refer the reader to the rich recent literature dealing with this problem \citep{bubeckHomotopyMethodLp2018,adilIterativeRefinementPnorm2019,adilFastProvablyConvergent2019,jambulapatiImprovedIterationComplexities2022}.

To see why the problem we are considering is difficult, let us briefly review some of the recent literature. Most closely related to our problem are the results of
\citep{audibertRobustLinearLeast2011,hsuRandomDesignAnalysis2012,oliveiraLowerTailRandom2016,lecuePerformanceEmpiricalRisk2016}, who derive high probability non-asymptotic excess risk bounds for the case $p = 2$. The best such bounds are found in \citet{oliveiraLowerTailRandom2016} and \citet{lecuePerformanceEmpiricalRisk2016}, who both operate under weak assumptions on $(X, Y)$, requiring at most the existence of fourth moments of $Y$ and the components $X^j$ of $X$ for $j \in [d]$. Unfortunately, the analysis in \citet{oliveiraLowerTailRandom2016} relies on the closed form expression of the empirical risk minimizer $\hat{w}_{2}$, and therefore cannot be extended to other values of $p$. Similarly, the analysis in \citet{lecuePerformanceEmpiricalRisk2016} relies on an exact decomposition of the excess loss $\ell_2(\error) - \ell_2(\opterror)$ in terms of ``quadratic'' and ``multiplier'' components, which also does not extend to other values of $p$. 

To address these limitations, the work of \citet{mendelsonLearningConcentrationGeneral2018a} extends the ideas of \citet{mendelsonLearningConcentration2014, lecuePerformanceEmpiricalRisk2016} to work for loss functions more general than the square loss. Roughly speaking, the main result of \citet{ mendelsonLearningConcentrationGeneral2018a} states that as long as the loss is strongly convex and smooth in a neighbourhood of $0$, the techniques developed by \citet{mendelsonLearningConcentration2014} can still be applied to obtain high probability excess risk bounds. Unfortunately, the loss functions $\ell_{p}(t)$ are particularly ill-behaved in precisely this sense, as $\ell_{p}''(t) \to 0$ when $t \to 0$ for $p > 2$, and $\abs{\ell_{p}''(t)} \to \infty$ as $t \to 0$ for $p \in (1, 2)$. This makes the analysis of the excess risk of ERM in the case $p \in (1, \infty) \setminus \brace{2}$ particularly challenging using well-established methods.

Contrary to the non-asymptotic regime, the asymptotic properties of the excess risk of ERM under the losses $\ell_{p}(t)$ are better understood \citep{ronnerAsymptoticNormalityPnorm1984,baiMEstimationMultivariateLinear1992,niemiroAsymptoticsEstimatorsDefined1992,arconesBahadurKieferRepresentationLp1996,heGeneralBahadurRepresentation1996,laiOverviewAsymptoticProperties2005}, and can be derived from the more general classical asymptotic theory of $M$-estimators \citep{lehmannTheoryPointEstimation2006,vandervaartWeakConvergenceEmpirical1996,vaartAsymptoticStatistics1998} under mild regularity conditions. In particular, these asymptotic results imply that the excess risk of ERM with $n$ samples satisfies
\begin{equation}
\label{eq:asymptotic}
    \Exp\brack{R_{p}(\hat{w}_{p})} - R_{p}(w_{p}^{*}) = \frac{\Exp\brack*{\norm{\nabla \ell_p(\inp{w_{p}^{*}}{X} - Y)}^{2}_{H_{p}^{-1}}}}{2n} + o\paren*{\frac{1}{n}} \quad \text{as} \quad n \to \infty,
\end{equation}
where $H_{p} \defeq \nabla^{2}R_{p}(w_{p}^{*})$ is the Hessian of the risk at its minimizer. We refer the reader to the discussions in \citet{ostrovskiiFinitesampleAnalysisEstimators2021,mourtadaImproperEstimatorOptimal2022} for more details. As we demonstrate 
in Theorem \ref{thm:peq2}, the rate of convergence of ERM for the square loss derived in \citet{oliveiraLowerTailRandom2016} and \citet{lecuePerformanceEmpiricalRisk2016} matches the asymptotic rate (\ref{eq:asymptotic}) up to a constant factor. Ideally, we would like our excess risk bounds for the cases $p \in (1, \infty) \setminus \brace{2}$ to also match the asymptotic rate (\ref{eq:asymptotic}), although it is not yet clear how to derive any meaningful such bounds.

% What was achieved ?
In this paper, we prove the first high probability non-asymptotic excess risk bounds for ERM under the $p$-th power losses $\ell_{p}(t)$ for any $p \in (1, \infty) \setminus \brace{2}$. Our assumptions on $(X, Y)$ are weak, arise naturally from the analysis, and reduce to the standard ones for the case $p = 2$. Furthermore, the rate we derive matches, up to a constant that depends only on $p$, the asymptotically exact rate (\ref{eq:asymptotic}). 

% What can I expect if I keep reading ?
We split the analysis in three cases. The first is when the problem is realizable, i.e.\ $Y = \inp{w^{*}}{X}$ for some $w^{*} \in \R^{d}$. This edge case is not problematic for the analysis of the case $p = 2$, but as discussed above, the $\ell_{p}(t)$ losses are ill-behaved around $0$ for $p \in (1, \infty) \setminus \brace{2}$, requiring us to treat this case separately. The second case is when the problem is not realizable and $p \in (2, \infty)$. The final case is when the problem is not realizable and $p \in (1, 2)$, which turns out to be the most technically challenging. In Section \ref{sec:main_results}, we present our main results and in Section \ref{sec:proofs}, we provide their proofs.

% Notation
\textbf{Notation.} We denote the components of the random vector $X \in \R^{d}$ by $X^j$ for $j \in [d]$. We assume the support of $X$ is not contained in any hyperplane, i.e. $\Prob\paren{\inp{w}{X} = 0} = 1$ only if $w = 0$. This is without loss of generality as discussed in \citet{oliveiraLowerTailRandom2016,mourtadaExactMinimaxRisk2022}. For a positive semi-definite matrix $A$, we denote the bilinear form it induces on $\R^{d}$ by $\inp{\cdot}{\cdot}_{A}$, and define $\norm{\cdot}_{A} \defeq \sqrt{\inp{\cdot}{\cdot}_{A}}$. We define $H_{p, n} \defeq \nabla^2 R_{p, n}(\opt)$.

\section{Main results}
\label{sec:main_results}
% What's in this section ?
In this section, we state our main results. 
We start in Section \ref{subsec:constants} where we introduce constants that help us formulate our theorems. In Section \ref{subsec:background}, we state the best known results for both the case $p=2$ and the realizable case where $Y = \inp{w^{*}}{X}$. Finally, in Section \ref{subsec:results}, we state our theorems.

\subsection{Norm equivalence and small ball constants}
\label{subsec:constants}

% What's in this subsection ?
To state our results, we will need to define two types of quantities first. The first kind are related to norms and their equivalence constants, which we will use in the analysis of the non-realizable case. The second are small ball probabilities, which we will use in the analysis of the realizable case.  

% Norm equivalence constants
We start by introducing the following functions on our space of coefficients $\R^{d}$. For $p, q \in [1, \infty)$, define, with the convention $\infty^{1/p} \defeq \infty$ for all $p \in [1, \infty)$,
\begin{align}
\label{def:norms}
    \norm{w}_{L^{p}} &\defeq \Exp\brack{\abs{\inp{w}{X}}^{p}}^{1/p}, & \norm{w}_{L^{q}, p} &\defeq \Exp\brack{\norm{w}_{\nabla^2\ell_{p}(\inp{w^{*}_{p}}{X} - Y)}^{q}}^{1/q}.
\end{align}
As suggested by the notation, under appropriate moment assumptions on $X$, these functions are indeed norms on $\R^{d}$. In that case, we will be interested in norm equivalence constants between them
\begin{align}
\label{def:constants}
    C_{a \to b} &\defeq \sup_{w \in \R^{d} \setminus \brace{0}} \frac{\norm{w}_{a}}{\norm{w}_{b}}, & \sigma_{p}^2 &\defeq C_{(L^{4}, p) \to (L^{2}, p)}^{4},
\end{align}
where $a$ and $b$ stand for one of $L^{p}$ or $(L^{q}, p)$. Let us note that since we work in a finite dimensional vector space, all norms are equivalent, so that as soon as the quantities defined in (\ref{def:norms}) are indeed norms, the constants defined in (\ref{def:constants}) are finite. Furthermore, as suggested by the notation, $\sigma^{2}_{p}$ may be viewed as the maximum second moment of the random variables $\norm{w}_{\nabla^2\ell_{p}(\opterror)}^{2}$ over the unit sphere of $\norm{\cdot}_{L^{2}, p}$. Finally, we record the following identities for future use
\begin{align}
\label{eq:identities}
    \norm{w}_{L^{2}, p} &= \norm{w}_{H_p}, & \norm{w}_{L^{q}, 2} &= \norm{w}_{L^{q}}, &\sigma^{2}_{2} = C_{L^{4}, L^{2}}^{4}.
\end{align}
The first identity holds by linearity, and the second by noticing that $\nabla^{2} \ell_{2}(\inp{w}{X} - Y) = X X^{T}$.

% Small ball probabilities
We now turn to small ball probabilities. We define the following functions on $\R^{d}$, for $q \in [1, \infty)$,
\begin{align}
\label{def:smallball}
    \rho_{0}(w) &\defeq \Prob\paren{\inp{w}{X} = 0}, & \rho_{q}(w, \kappa) &\defeq \Prob\paren{\abs{\inp{w}{X}} > \kappa \norm{w}_{L^{q}}}.
\end{align}
Assumptions on the functions $\rho_{0}$ and $\rho_{2}$ have been used extensively in the recent literature, see e.g.\
\cite{mendelsonLearningConcentration2014,koltchinskiiBoundingSmallestSingular2015,lecueRegularizationSmallballMethod2017a,lecueSparseRecoveryWeak2017,mendelsonLearningConcentrationGeneral2018a,lecueRegularizationSmallballMethod2018,mourtadaExactMinimaxRisk2022}. In particular, a standard assumption postulates the existence of strictly positive constants $\beta_0$, and $(\beta_2, \kappa_2)$ such that $\rho_{0}(w) \leq 1 - \beta_{0}$ and $\rho_{2}(w, \kappa_2) \geq \beta_2$ for all $w \in \R^{d}$.
\iffalse
\begin{align}
\label{def:smallballcsts}
    \rho_{0}(w) &\leq 1 - \beta_{0}, & \rho_{2}(w, \kappa_2) \geq \beta_2, 
\end{align}
for all $w \in \R^{d}$.
\fi
Conditions of this type are usually referred to as small ball conditions. Efforts have been made to understand when these conditions hold 
\cite{mendelsonLearningConcentration2014,rudelsonSmallBallProbabilities2015,lecueSparseRecoveryWeak2017}
as well as reveal the dimension dependence of the constants with which they do 
\cite{saumardOptimalityEmpiricalRisk2018}. Here we prove that such conditions always hold for finite dimensional spaces. We leave the proof of Lemma \ref{lem:smallball} to Appendix \ref{sec:small_ball} to not distract from our main development.
\begin{lemma}
\label{lem:smallball}
    $\rho_0$ is upper semi-continuous. Furthermore, if for some $q \in [1, \infty)$, $\Exp\brack{\abs{X^j}^{q}} < \infty$ for all $j \in [d]$, then $\rho_{q}(\cdot, \kappa)$ is lower semi-continuous for any $\kappa \geq 0$. Moreover, for all $\kappa \in [0, 1)$
    \begin{align*}
        \rho &\defeq \sup_{w \in \R^{d} \setminus \brace{0}} \rho_0(w) < 1, & \inf_{w \in \R^{d} \setminus \brace{0}} \rho_q(w, \kappa) &> 0.
    \end{align*}
\end{lemma}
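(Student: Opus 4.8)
The plan is to prove the three claims in sequence: (i) upper semi-continuity of $\rho_0$; (ii) lower semi-continuity of $\rho_q(\cdot, \kappa)$ under the moment assumption; and (iii) the uniform bounds $\rho < 1$ and $\inf_w \rho_q(w, \kappa) > 0$, which will follow from (i), (ii), and a compactness argument. The key structural observation is that all three functions in (\ref{def:smallball}) are scale-invariant in $w$ (note $\rho_q(w,\kappa)$ depends on $w$ only through the ratio inside the probability, and $\norm{\cdot}_{L^q}$ is positively homogeneous), so the suprema and infima over $\R^d \setminus \{0\}$ are really over the compact unit sphere $\Sd$. Combined with semi-continuity, the extrema are attained, and the hard work reduces to showing the attained value is not the degenerate one.

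For (i), I would write $\rho_0(w) = \Prob(\inp{w}{X} = 0) = \Exp[\ind{\inp{w}{X}=0}]$ and observe that for a sequence $w_k \to w$, the event $\{\inp{w}{X} = 0\}$ contains $\limsup_k \{\inp{w_k}{X} = 0\}$ only in a pointwise-a.e. sense that needs care; the clean route is Fatou's lemma applied to $1 - \ind{\inp{w_k}{X}=0} = \ind{\inp{w_k}{X}\neq 0}$, using that $x \mapsto \ind{\inp{w}{x}\neq 0}$ is lower semi-continuous in $w$ for each fixed $x$ (if $\inp{w}{x}\neq 0$ then $\inp{w'}{x}\neq 0$ for $w'$ near $w$). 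Fatou then gives $\liminf_k \Exp[\ind{\inp{w_k}{X}\neq 0}] \geq \Exp[\ind{\inp{w}{X}\neq 0}]$, i.e. $\limsup_k \rho_0(w_k) \leq \rho_0(w)$. For (ii), the same idea applies: under $\Exp[\abs{X^j}^q] < \infty$ for all $j$, the map $w \mapsto \norm{w}_{L^q}$ is finite and continuous (it is a norm on a finite-dimensional space, or directly by dominated convergence since $\abs{\inp{w_k}{X}}^q \le (\sum_j \abs{w_k^j}\abs{X^j})^q$ is dominated on a bounded neighborhood by an integrable envelope). Then $x \mapsto \ind{\abs{\inp{w}{x}} > \kappa \norm{w}_{L^q}}$ is lower semi-continuous in $w$ on the set where the strict inequality holds, so Fatou gives $\liminf_k \rho_q(w_k, \kappa) \geq \rho_q(w, \kappa)$.

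For (iii): by scale invariance, $\rho = \sup_{w \in \Sd} \rho_0(w)$, and since $\Sd$ is compact and $\rho_0$ is upper semi-continuous, the supremum is attained at some $w_0 \in \Sd$. If $\rho = 1$ then $\rho_0(w_0) = 1$, i.e. $\Prob(\inp{w_0}{X} = 0) = 1$ with $w_0 \neq 0$, contradicting the standing assumption (stated in the Notation paragraph) that the support of $X$ lies in no hyperplane. Hence $\rho < 1$. Similarly $\inf_{w \in \Sd} \rho_q(w, \kappa)$ is attained at some $w_1 \in \Sd$ by lower semi-continuity and compactness; it suffices to show $\rho_q(w_1, \kappa) > 0$ for each fixed $w_1 \neq 0$ and $\kappa \in [0,1)$. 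This is a Paley–Zygmund-type statement: if $\Prob(\abs{\inp{w_1}{X}} > \kappa \norm{w_1}_{L^q}) = 0$ then $\abs{\inp{w_1}{X}} \leq \kappa \norm{w_1}_{L^q}$ almost surely, so $\norm{w_1}_{L^q}^q = \Exp[\abs{\inp{w_1}{X}}^q] \leq \kappa^q \norm{w_1}_{L^q}^q$, forcing $\norm{w_1}_{L^q} = 0$ (using $\kappa < 1$) and hence $\inp{w_1}{X} = 0$ a.s., again contradicting the no-hyperplane assumption since $w_1 \neq 0$. This also handles the case $q = 1$ or when $\norm{w_1}_{L^q}$ could a priori be infinite — but finiteness is guaranteed by the moment hypothesis, so the argument is clean.

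The main obstacle is the semi-continuity of $\rho_0$: the indicator of a measure-zero-boundary event is delicate, and one must be careful that it is $\rho_0$ (not $1-\rho_0$) that is upper semi-continuous — the direction matters for the compactness argument to yield $\rho < 1$ rather than a useless bound. The Fatou argument above is the right tool, but the lower semi-continuity of the integrand $x \mapsto \ind{\inp{w}{x}\neq 0}$ as a function of $w$ should be spelled out carefully. Everything else is routine; the moment assumption is used exactly once, to ensure $\norm{\cdot}_{L^q}$ is finite-valued and continuous so that the threshold in $\rho_q$ varies continuously.
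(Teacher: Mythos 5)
Your proposal is correct and follows essentially the same route as the paper: (reverse) Fatou plus the pointwise observation that the indicator condition is stable under small perturbations of $w$ for semi-continuity, then scale invariance, compactness of the sphere, and the no-hyperplane assumption to rule out the degenerate extremal values. The only cosmetic differences are that you phrase the $\rho_0$ step as Fatou on $1-\rho_0$ rather than reverse Fatou on $\rho_0$, and you normalize $\rho_q$ on the Euclidean sphere where the paper uses the $L^q$ unit sphere; both variants are equivalent.
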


\subsection{Background}
\label{subsec:background}
To better contextualize our results, we start by stating the best known high probability bound on ERM for the square loss, which we deduce from \citet{oliveiraLowerTailRandom2016} and \citet{lecuePerformanceEmpiricalRisk2016}.
\begin{theorem}[Theorem 4.2, \citet{oliveiraLowerTailRandom2016}; Theorem 1.3, \citet{lecuePerformanceEmpiricalRisk2016}]
\label{thm:peq2}
    Assume that $\Exp\brack{Y^{2}} < \infty$ and $\Exp\brack{(X^{j})^{4}} < \infty$ for all $j \in [d]$, and let $\delta \in (0, 1]$.
    If
    \begin{equation*}
        n \geq 196 \sigma^{2}_{2} \paren*{d + 2\log(4/\delta)},
    \end{equation*}
    then, with probability at least $1 - \delta$
    \begin{equation*}
        R_{2}(\hat{w}_2) - R_{2}(w_{2}^{*}) \leq \frac{16 \Exp\brack{\norm{\nabla \ell_{2}(\inp{w^{*}_{2}}{X} - Y)}^{2}_{H_{2}^{-1}}}}{n \delta}.
    \end{equation*}
\end{theorem}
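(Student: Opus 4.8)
The plan is to recover this bound by assembling two ingredients through the exact quadratic decomposition of the excess loss that underlies \citet{lecuePerformanceEmpiricalRisk2016}: the lower-tail bound for empirical second-moment matrices of \citet{oliveiraLowerTailRandom2016} (the ``quadratic'' part) and an elementary first-moment estimate (the ``multiplier'' part). Write $H_2 = \Exp\brack{X X^{T}}$ (since $\ell_2'' \equiv 1$ this is the Hessian of $R_2$ at every point), set $\xi \defeq \inp{w_2^{*}}{X} - Y$ with samples $\xi_i \defeq \inp{w_2^{*}}{X_i} - Y_i$, and let $h \defeq \hat{w}_2 - w_2^{*}$. Because $R_2$ is a convex quadratic with Hessian $H_2$ and $\nabla R_2(w_2^{*}) = \Exp\brack{\xi X} = 0$, the excess risk is \emph{exactly} $R_2(\hat w_2) - R_2(w_2^{*}) = \tfrac12\norm{h}_{H_2}^{2}$. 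First I would record the basic inequality obtained by expanding $\ell_2$ and using $R_{2,n}(\hat w_2) \le R_{2,n}(w_2^{*})$:
\begin{equation*}
  \frac{1}{2n}\sum_{i=1}^{n}\inp{h}{X_i}^{2} \;\le\; \abs*{\inp*{\frac{1}{n}\sum_{i=1}^{n}\xi_i X_i}{h}} \;\le\; \norm*{\frac{1}{n}\sum_{i=1}^{n}\xi_i X_i}_{H_2^{-1}}\,\norm{h}_{H_2},
\end{equation*}
the last step being Cauchy--Schwarz in the $H_2$ inner product. (This inequality holds for any empirical minimizer; it is also what generalizes beyond $p = 2$.)

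For the quadratic term, $\tfrac1n\sum_i\inp{h}{X_i}^{2} = \inp{h}{\hat\Sigma h}$ with $\hat\Sigma \defeq \tfrac1n\sum_i X_iX_i^{T}$; whitening by $Z_i \defeq H_2^{-1/2}X_i$ (so that $\Exp\brack{ZZ^{T}} = I$ and $\sigma_2^{2}$ is precisely the supremum of $\Exp\brack{\inp{v}{Z}^{4}}$ over unit vectors $v$) turns this into $\norm{h}_{H_2}^{2}$ times a Rayleigh quotient of $\hat G \defeq \tfrac1n\sum_i Z_iZ_i^{T}$. Oliveira's lower-tail bound, which requires only $\Exp\brack{(X^{j})^{4}} < \infty$ for $j \in [d]$, shows that the hypothesis $n \ge 196\,\sigma_2^{2}\paren{d + 2\log(4/\delta)}$ forces $\lambda_{\min}(\hat G) \ge \tfrac12$, hence $\tfrac1n\sum_i\inp{h}{X_i}^{2} \ge \tfrac12\norm{h}_{H_2}^{2}$ uniformly in $h$, on an event $\mathcal E_1$ of probability at least $1 - \delta/2$ (the $\log(4/\delta)$ in the threshold corresponds to invoking Oliveira's bound at confidence $1 - \delta/2$).

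For the multiplier term, $\tfrac1n\sum_i\xi_i Z_i$ is an average of \iid mean-zero vectors (mean zero since $\Exp\brack{\xi Z} = H_2^{-1/2}\Exp\brack{\xi X} = 0$ by optimality), so, using $\nabla\ell_2(\inp{w_2^{*}}{X} - Y) = \xi X$ together with $\norm{\xi X}_{H_2^{-1}}^{2} = \xi^{2}\norm{Z}_2^{2}$,
\begin{equation*}
  \Exp\brack*{\norm*{\frac{1}{n}\sum_{i=1}^{n}\xi_i Z_i}_2^{2}} = \frac{1}{n}\Exp\brack{\xi^{2}\norm{Z}_2^{2}} = \frac{1}{n}\Exp\brack*{\norm{\nabla\ell_2(\inp{w_2^{*}}{X} - Y)}_{H_2^{-1}}^{2}}.
\end{equation*}
If this quantity is infinite the theorem is vacuous; finiteness of the residuals (needed merely to define $w_2^{*}$ and make $\Exp\brack{\xi X} = 0$ meaningful) is the only place $\Exp\brack{Y^{2}} < \infty$ is used. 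Markov's inequality then gives $\norm*{\frac{1}{n}\sum_i\xi_i X_i}_{H_2^{-1}}^{2} \le \tfrac{2}{\delta n}\Exp\brack*{\norm{\nabla\ell_2(\inp{w_2^{*}}{X} - Y)}_{H_2^{-1}}^{2}}$ on an event $\mathcal E_2$ of probability at least $1 - \delta/2$.

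On $\mathcal E_1 \cap \mathcal E_2$, which has probability at least $1 - \delta$, the three displays above combine to give
\begin{equation*}
  \tfrac14\norm{h}_{H_2}^{2} \;\le\; \sqrt{\frac{2}{\delta n}\Exp\brack*{\norm{\nabla\ell_2(\inp{w_2^{*}}{X} - Y)}_{H_2^{-1}}^{2}}}\;\norm{h}_{H_2},
\end{equation*}
whence $\norm{h}_{H_2}^{2} \le \tfrac{32}{\delta n}\Exp\brack*{\norm{\nabla\ell_2(\inp{w_2^{*}}{X} - Y)}_{H_2^{-1}}^{2}}$, and halving (the excess risk being $\tfrac12\norm{h}_{H_2}^{2}$) gives the stated bound with constant $16$. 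The one genuinely deep step is the eigenvalue lower bound $\lambda_{\min}(\hat G) \ge \tfrac12$ from a $d/n$-type sample-size condition under \emph{only} fourth moments; this is Oliveira's theorem, which I would invoke as a black box rather than reprove. Everything else is bookkeeping of constants, together with the observation that the multiplier first moment is exactly the numerator of the asymptotic rate (\ref{eq:asymptotic}), which is what makes the bound match it up to an absolute constant.
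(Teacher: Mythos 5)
Your proposal is correct and follows essentially the same route as the paper: the exact quadratic (multiplier plus quadratic) decomposition of the excess empirical risk, the ERM basic inequality with Cauchy--Schwarz in the $H_2$ inner product, a Chebyshev/Markov bound at level $\delta/2$ for the multiplier term, and Oliveira's lower-tail bound at level $\delta/2$ for the quadratic term, yielding the same constant $16$. The explicit whitening by $H_2^{-1/2}$ is only a presentational difference from the paper's direct invocation of Proposition \ref{prop:lower_tail}.
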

Up to a constant factor and the dependence on $\delta$, Theorem \ref{thm:peq2} recovers the asymptotically exact rate (\ref{eq:asymptotic}). Let us briefly comment on the differences between Theorem \ref{thm:peq2} and the comparable statements in the original papers. First, the finiteness of $\sigma_2^{2}$ is deduced from the finiteness of the fourth moments of the components of $X$, instead of being assumed as in \citet{oliveiraLowerTailRandom2016} (see the discussion in Section 3.1 in \citet{oliveiraLowerTailRandom2016}). Second we combine Theorem 3.1 from \cite{oliveiraLowerTailRandom2016} with the proof technique of \citet{lecuePerformanceEmpiricalRisk2016} to achieve a slightly better bound that the one achieved by the proof technique used in the proof of Theorem 4.2 in \citet{oliveiraLowerTailRandom2016}, while avoiding the dependence on the small ball-constant present in the bound of Theorem 1.3 in \citet{lecuePerformanceEmpiricalRisk2016}, which is known to incur additional dimension dependence in some cases \citep{saumardOptimalityEmpiricalRisk2018}. % Mention something about the proof

We now move to the realizable case, where $Y = \inp{w^{*}}{X}$ so that $w_{p}^{*} = w^{*}$ for all $p \in (1, \infty)$. We immediately note that Theorem \ref{thm:peq2} is still applicable in this case, and ensures that we recover $w^{*}$ exactly with no more than $n = O(\sigma_2^2 d)$ samples. However, we can do much better, while getting rid of all the moment assumptions in Theorem \ref{thm:peq2}. Indeed, it is not hard to see that $\erm \neq w^{*}$ only if for some $w \in \R^{d} \setminus \brace{0}$, $\inp{w}{X_i} = 0$ for all $i \in [n]$ (taking $w = \erm - \opt$ works). The implicit argument in Theorem \ref{thm:peq2} then uses the pointwise bound (see Lemma B.2 in \citet{oliveiraLowerTailRandom2016})
\begin{equation*}
    \Prob\paren{\cap_{i=1}^{n} \brace{\inp{w}{X_i} = 0}} \leq \exp\paren*{-\frac{n}{2\sigma^2_2}},
\end{equation*}
and uniformizes it over the $L^{2}$ unit sphere in $\R^{d}$, where the $L^2$ norm is as defined in (\ref{def:norms}). However, we can use the much tighter bound $\rho^{n}$ where $\rho$ is as defined in Lemma \ref{lem:smallball}. To the best of our knowledge, the realizable case has not been studied explicitly before in the literature. However, with the above considerations in mind, we can deduce the following result from \citet{lecueSparseRecoveryWeak2017}, which uniformizes the pointwise bound we just discussed using a VC dimension argument.
\begin{theorem}[Corollary 2.5, \citet{lecueSparseRecoveryWeak2017}]
\label{thm:realizable1}
    Assume that there exists $w^{*} \in \R^{d}$ such that $Y = \inp{w^{*}}{X}$. Let $\delta \in (0, 1]$.
    If
    \begin{equation*}
        n \geq O\paren*{\frac{d + \log(1/\delta)}{(1 - \rho)^{2}}}
    \end{equation*}
    then for any $p \in (1, \infty)$, $\hat{w}_{p} = w^{*}$ with probability at least $1 - \delta$.
\end{theorem}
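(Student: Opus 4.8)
The plan is to reduce the statement to a purely linear-algebraic event about the design points $X_1,\dots,X_n$ and then bound that event by a stopping-time argument. In the realizable case $Y_i=\inp{w^{*}}{X_i}$ we have $R_{p,n}(w^{*})=\tfrac1n\sum_{i=1}^{n}\ell_p(0)=0$, and since $\ell_p\ge 0$ with $\ell_p(t)=0$ iff $t=0$ for $p\in(1,\infty)$, the point $w^{*}$ is a global minimizer of $R_{p,n}$ and every empirical risk minimizer $\erm$ attains $R_{p,n}(\erm)=0$. Hence each summand vanishes, i.e.\ $\inp{\erm}{X_i}=Y_i=\inp{w^{*}}{X_i}$ for all $i\in[n]$, so that $\erm\ne w^{*}$ forces the nonzero vector $v\defeq\erm-w^{*}$ to lie in $\bigcap_{i=1}^{n}X_i^{\perp}$; equivalently it forces $\Span(X_1,\dots,X_n)\ne\R^{d}$. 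Consequently
\begin{equation*}
    \Prob\paren*{\erm\ne w^{*}}\;\le\;\Prob\paren*{\Span(X_1,\dots,X_n)\ne\R^{d}},
\end{equation*}
and the right-hand side does not depend on $p$; it remains to bound it by $\delta$.

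For the second step I would track how quickly the span of the sample exhausts $\R^{d}$. Write $\mathcal F_{k}=\sigma(X_1,\dots,X_k)$ and $V_k=\Span(X_1,\dots,X_k)$; since $k\mapsto\dim V_k$ is nondecreasing, the event $\{\Span(X_1,\dots,X_n)\ne\R^{d}\}$ is contained in $\{\dim V_k\le d-1\text{ for all }k\le n\}$. On $\{\dim V_{k-1}\le d-1\}$ choose, measurably, a hyperplane $H_{k-1}\supseteq V_{k-1}$ through the origin with unit normal $u_{k-1}$; then Lemma \ref{lem:smallball} gives
\begin{equation*}
    \Exp\brack*{\ind{X_k\notin H_{k-1}}\smiddle\mathcal F_{k-1}}\;=\;\Prob\paren*{\inp{u_{k-1}}{X_k}\ne 0\smiddle\mathcal F_{k-1}}\;=\;1-\rho_0(u_{k-1})\;\ge\;1-\rho\;>\;0,
\end{equation*}
and, crucially, $X_k\notin H_{k-1}$ implies $X_k\notin V_{k-1}$, hence $\dim V_k=\dim V_{k-1}+1$. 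Therefore, on $\{\dim V_n\le d-1\}$ the count $N\defeq\sum_{k=1}^{n}\ind{X_k\notin H_{k-1}}$ satisfies $N\le\dim V_n\le d-1$. A routine stochastic-domination argument — extend $N$ to the whole sample space by replacing $\ind{X_k\notin H_{k-1}}$ by an independent $\Ber(1-\rho)$ draw on $\{\dim V_{k-1}=d\}$, which only enlarges the sum while leaving the bound $N\le d-1$ untouched on the event of interest — then shows $N$ stochastically dominates a $\Bin(n,1-\rho)$ variable, whence
\begin{equation*}
    \Prob\paren*{\Span(X_1,\dots,X_n)\ne\R^{d}}\;\le\;\Prob\paren*{N\le d-1}\;\le\;\Prob\paren*{\Bin(n,1-\rho)\le d-1}.
\end{equation*}
A Chernoff bound on the binomial lower tail finishes the proof: once $n(1-\rho)\ge 2d+8\log(1/\delta)$, which holds in particular when $n\ge c\,(d+\log(1/\delta))/(1-\rho)^{2}$ for a suitable universal $c$, the last probability is at most $\delta$.

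The only delicate point is making the passage from the adapted count $N$ to an honest $\Bin(n,1-\rho)$ comparison rigorous, which is exactly what the auxiliary i.i.d.\ $\Ber(1-\rho)$ variables are for; everything else is bookkeeping plus a standard tail bound, and, consistently with the statement, no moment assumptions enter since we only invoke $\rho<1$ from Lemma \ref{lem:smallball}. An alternative — and the route underlying the cited \citet{lecueSparseRecoveryWeak2017} — is to keep the pointwise estimate $\Prob\paren{\inp{v}{X_i}=0\text{ for all }i}=\rho_0(v)^{n}\le\rho^{n}$ and uniformize it over $v\in\R^{d}\setminus\{0\}$ via a Vapnik--Chervonenkis / $\eps$-net argument for the class of homogeneous half-spaces $\{x:\inp{v}{x}>0\}$, whose VC dimension is $O(d)$; this reproduces the stated $(1-\rho)^{-2}$ scaling, whereas the stopping-time argument above in fact yields the slightly sharper $(1-\rho)^{-1}$.
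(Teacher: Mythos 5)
Your argument is correct, and it takes a genuinely different route from the paper. The paper does not prove Theorem \ref{thm:realizable1} at all — it is imported verbatim as Corollary 2.5 of Lecu\'e--Mendelson, whose proof uniformizes the pointwise estimate $\Prob\paren{\inp{v}{X_i}=0 \ \forall i}\le\rho^{n}$ over $v$ via a VC-dimension argument (your "alternative route" paragraph is exactly their proof); the paper's own contribution in this direction is the stronger Theorem \ref{thm:realizable}, proved in Appendix \ref{sec:apdx_realizable} by a static union bound over all $\binom{n}{d-1}$ subsets $S$ of the sample, each contributing $\rho^{n-d+1}$ via a normal vector to $\Span\paren{\brace{X_k \mid k\in S}}$. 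Your reduction step (ERM fails only if the design fails to span $\R^d$) is identical to the paper's, but your uniformization is sequential rather than combinatorial: you track $\dim\Span(X_1,\dots,X_k)$, note that each fresh sample escapes any fixed hyperplane containing the current span with conditional probability at least $1-\rho$, and conclude that $\brace{\dim V_n\le d-1}$ forces at most $d-1$ successes out of $n$ conditionally-$\Ber(\ge 1-\rho)$ trials, giving $\Prob\paren{\erm\ne w^{*}}\le\Prob\paren{\Bin(n,1-\rho)\le d-1}$. This is of the same order as the paper's $\binom{n}{d-1}\rho^{n-d+1}$ (the binomial lower tail is a sum of $d$ such terms), so your Chernoff step indeed yields the $(1-\rho)^{-1}$ sample complexity that the paper obtains only through Theorem \ref{thm:realizable}, and in particular implies the stated $(1-\rho)^{-2}$ condition. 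What your approach buys is a short, self-contained proof avoiding both the VC machinery and the combinatorial bookkeeping; what the paper's union bound buys is an explicit non-asymptotic probability bound and, after optimizing the three regimes of $\rho$, a sharper dependence for very small $\rho$ (the $d+\log(1/\delta)/\log(1/\rho)$ regime), which a multiplicative Chernoff bound on the binomial does not recover. The two technical points you flag — measurable selection of the normal $u_{k-1}$ (cf.\ the paper's Lemma \ref{lem:exp_const} for an explicit construction) and the coupling behind the stochastic domination — are both standard and pose no obstacle.
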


\subsection{Results}
\label{subsec:results}
We are now in position to state our main results. As discussed in Section \ref{sec:introduction}, the $\ell_p(t)$ losses have degenerate second derivatives as $t \to 0$. When the problem is realizable, the risk is not twice differentiable at its minimizer for the cases $p \in (1, 2)$, and is degenerate for the cases $p \in (2, \infty)$. If we want bounds of the form (\ref{eq:asymptotic}), we must exclude this case from our analysis. Our first main result is a strengthening of Theorem \ref{thm:realizable1}, and relies on a combinatorial argument to uniformize the pointwise estimate discussed in Section \ref{subsec:background}.
\begin{restatable}{theorem}{realizable}
\label{thm:realizable}
    Assume that there exists $w^{*} \in \R^{d}$ such that $\inp{w^{*}}{X} = Y$. Then for all $n \geq d$, and for all $p \in (1, \infty)$, we have
    \begin{equation*}
        \Prob\paren*{\erm \neq w^{*}} \leq {n \choose d-1} \rho^{n-d+1},
    \end{equation*}
    where $\rho$ is as defined in Lemma \ref{lem:smallball}. Furthermore, if
        \begin{equation*}
            n \geq
            \begin{dcases*}
            O\left(d + \log\paren{1/\delta}/\log\paren{1/\rho}\right) & \quad if \quad $0 \leq \rho < e^{-1}$, \\
            O\left(\frac{d + \log\paren{1/\delta}}{1-\rho}\right) & \quad if \quad $e^{-1} \leq \rho < e^{-1/e}$, \\
            O\left(\frac{d\log\paren{1/(1-\rho)} + \log\paren{1/\delta}}{1-\rho}\right) & \quad if \quad $e^{-1/e} \leq \rho < 1$,
            \end{dcases*}
        \end{equation*}
        then with probability at least $1 - \delta$, $\hat{w}_{p} = w^{*}$.
\end{restatable}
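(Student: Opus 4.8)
The plan is to reduce the event $\brace{\erm \neq w^{*}}$ to a combinatorial event about the sample vectors, and then control it with the small-ball constant $\rho$ from Lemma~\ref{lem:smallball}. First, realizability gives $\emprisk{w^{*}} = 0$, so every empirical risk minimizer $\erm$ also attains zero empirical risk, which forces $\inp{\erm}{X_i} = Y_i = \inp{w^{*}}{X_i}$ for all $i \in [n]$. Hence, if $\erm \neq w^{*}$, then $w \defeq \erm - w^{*}$ is a nonzero vector with $\inp{w}{X_i} = 0$ for every $i$; equivalently, $\rank{X_1, \dots, X_n} \le d-1$. It therefore suffices to bound $\Prob\paren{\rank{X_1, \dots, X_n} \le d-1}$.

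Next I would cover this event combinatorially. If $\rank{X_1, \dots, X_n} \le d-1$, take the index set of a maximal linearly independent subfamily of $X_1, \dots, X_n$ and pad it (using $n \ge d$) to a set $I \subseteq [n]$ with $\abs{I} = d-1$; then $X_j \in \Span\brace{X_i : i \in I}$ for every $j \in [n]$. Consequently,
\[
  \brace{\rank{X_1,\dots,X_n} \le d-1} \;\subseteq\; \bigcup_{\substack{I \subseteq [n] \\ \abs{I} = d-1}} \brace*{ X_j \in \Span\brace{X_i : i \in I}\ \text{ for all } j \in [n]\setminus I }.
\]
For a fixed such $I$, I would condition on $(X_i)_{i \in I}$: the subspace $W \defeq \Span\brace{X_i : i \in I}$ has dimension at most $d-1$, hence is contained in some hyperplane $\brace{x : \inp{v}{x} = 0}$ with $v \neq 0$, and for each $j \in [n] \setminus I$ the independence of $X_j$ from $(X_i)_{i \in I}$ gives $\Prob\paren{X_j \in W \mid (X_i)_{i \in I}} \le \rho_0(v) \le \rho$. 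Since the $X_j$ with $j \notin I$ are conditionally independent given $(X_i)_{i \in I}$, the conditional probability of the $I$-term is at most $\rho^{n-d+1}$; taking expectations and union bounding over the $\binom{n}{d-1}$ choices of $I$ yields $\Prob\paren{\erm \neq w^{*}} \le \binom{n}{d-1}\rho^{n-d+1}$ (and $\rho < 1$ by Lemma~\ref{lem:smallball}, so this is meaningful).

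For the sample-complexity statement it then remains to determine $n$ for which $\binom{n}{d-1}\rho^{n-d+1} \le \delta$. Using $\binom{n}{d-1} \le \paren{en/(d-1)}^{d-1}$ and writing $\rho^{n-d+1} = \rho^{(n-d+1)/2}\cdot \rho^{(n-d+1)/2}$, it suffices that simultaneously (i) $\tfrac12 (n-d+1)\log(1/\rho) \ge (d-1)\log\paren{en/(d-1)}$ and (ii) $\tfrac12 (n-d+1)\log(1/\rho) \ge \log(1/\delta)$. Condition (ii) amounts to $n \ge (d-1) + 2\log(1/\delta)/\log(1/\rho)$, which one rewrites in each of the three regimes via $\log(1/\rho) \ge 1-\rho$; condition (i) is an inequality of the form $n \gtrsim \paren{d/\log(1/\rho)}\log(n/d)$, and solving it contributes the $O(d)$, $O\paren{d/(1-\rho)}$, or $O\paren{d\log(1/(1-\rho))/(1-\rho)}$ term according to whether $\log(1/\rho) > 1$, $\log(1/\rho) \in (1/e, 1]$, or $\log(1/\rho) \le 1/e$ — the last regime also using the elementary estimate $\log(1/(1-\rho)) \asymp \log\paren{1/\log(1/\rho)}$ as $\rho \uparrow 1$.

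I expect the combinatorial covering to be the crux: obtaining the exponent $n-d+1$ — rather than the weaker rate a generic uniformity/VC argument gives, as in Theorem~\ref{thm:realizable1} — is what makes the bound essentially optimal, and it rests on the observation that any rank deficiency is already witnessed by a fixed $(d-1)$-subset of the samples, leaving the remaining $n-d+1$ samples to contribute independent small-ball factors. The realizability reduction and the final casework are routine.
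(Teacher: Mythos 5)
Your proposal is correct and follows essentially the same route as the paper: reduce $\{\erm \neq w^{*}\}$ to rank deficiency of the sample matrix, cover that event by the $\binom{n}{d-1}$ choices of a $(d-1)$-subset whose span absorbs the remaining $n-d+1$ samples, and use independence to extract a factor $\rho$ per remaining sample. The only cosmetic difference is that the paper constructs the orthogonal witness vector explicitly (via a pseudo-inverse, to make its measurability and independence from the other samples transparent), whereas you obtain the same factorization by conditioning on the spanning subset; both yield $\binom{n}{d-1}\rho^{n-d+1}$, and your casework for the sample-complexity corollary is the intended one.
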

Comparing Theorem \ref{thm:realizable1} and Theorem \ref{thm:realizable}, we see that the bound on the number of samples required to reach a confidence level $\delta$ in Theorem \ref{thm:realizable} is uniformly smaller than the one in Theorem \ref{thm:realizable1}. The proof of Theorem \ref{thm:realizable} can be found in Appendix \ref{sec:realizable}.
% Comment on the proof, say better up to constants ?
%We briefly compare Theorem \ref{thm:realizable1} and Theorem \ref{thm:realizable}. First, the probability estimate in Theorem \ref{thm:realizable1} holds for $n \geq c_1 d / (1-\rho)^{2}$, whereas that of Theorem \ref{thm:realizable} only requires $n \geq d$. Second the bound on the number of samples required to reach a confidence level $\delta$ in Theorem \ref{thm:realizable} is uniformly smaller than the one in Theorem \ref{thm:realizable1}. 

We now move to the more common non-realizable case. Our first theorem here gives a non-asymptotic bound for the excess risk of ERM under a $p$-th power loss for $p \in (2, \infty)$. To the best of our knowledge, no such result is known in the literature.
\begin{restatable}{theorem}{pgeq2}
\label{thm:pgeq2}
    Let $p \in (2, \infty)$ and $\delta \in (0, 1]$. Assume that no $w \in \R^{d}$ satisfies $Y = \inp{w}{X}$. Further, assume that $\Exp\brack{\abs{Y}^{p}} < \infty$, $\Exp\brack{\abs{X^j}^{p}} < \infty$, and $\Exp\brack{\abs{\inp{\opt}{X} - Y}^{2(p-2)} (X^j)^{4}} < \infty$ for all $j \in [d]$.
    If
    \begin{equation*}
        n \geq 196 \sigma^2_{p} \paren{d + 2\log\paren{4/\delta}},
    \end{equation*}
    then with probability at least $1 - \delta$
    \begin{multline*}
        R_{p}(\hat{w}_{p}) - R_{p}(w^{*}_{p}) \leq  \frac{2048 p^2 \deff}{n \delta} \\
        + \paren*{\frac{512 p^{4} c_{p}^{2} \deff}{n \delta}}^{p/2},
    \end{multline*}
    where we used $c_p$ to denote $C_{L^{p} \to (L^{2}, p)}$ as defined in (\ref{def:constants}).
\end{restatable}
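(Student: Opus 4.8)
The plan is to follow the optimality-plus-localization route familiar from the square loss, handling the degeneracy of $\ell_{p}''$ by passing to a reweighted design. Write $w^{*} := w^{*}_{p}$, $\hat v := \hat{w}_{p} - w^{*}$, and $t := \langle w^{*}, X\rangle - Y$ (with $t_{i} := \langle w^{*}, X_{i}\rangle - Y_{i}$); for a displacement $v$ put $s := \langle v, X\rangle$. Since $\ell_{p}''(u) = \abs{u}^{p-2}$, the second-order remainder $g(t,s) := \ell_{p}(t+s) - \ell_{p}(t) - \ell_{p}'(t)\,s = s^{2}\int_{0}^{1}(1-r)\,\abs{t+rs}^{p-2}\,dr \ge 0$ is the central object. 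The key structural observation is that, with $Z := \abs{t}^{(p-2)/2}X$, we have $\nabla^{2}\ell_{p}(t) = \abs{t}^{p-2}XX^{\top} = ZZ^{\top}$, hence $H_{p} = \mathbb{E}[ZZ^{\top}]$, $\norm{v}_{L^{2},p} = \norm{v}_{H_{p}} = \mathbb{E}[\langle v, Z\rangle^{2}]^{1/2}$, $\norm{v}_{L^{4},p} = \mathbb{E}[\langle v, Z\rangle^{4}]^{1/4}$, and therefore $\sigma^{2}_{p}$ is precisely the $L^{4}$-to-$L^{2}$ norm-equivalence constant of the random vector $Z$. The moment hypotheses guarantee $\mathbb{E}[\langle v, Z\rangle^{4}] < \infty$ for all $v$, which, together with non-realizability and the blanket support assumption, makes $H_{p}$ positive definite and $\sigma^{2}_{p}, \deff$ finite. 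The proof then splits into (a) showing that $\hat v$ is small in the $H_{p}$-norm, and (b) bounding $R_{p}(\hat{w}_{p}) - R_{p}(w^{*}) = \mathbb{E}[g(t, \langle \hat v, X\rangle)]$ by a local quadratic term plus a lower-order $L^{p}$ term.

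First I would record two elementary pointwise facts, valid for $p \ge 2$ and proved by one-variable calculus. (i) \emph{Curvature lower bound:} $g(t,s) \ge \tfrac{1}{Cp}\abs{t}^{p-2}s^{2}$ for a universal constant $C$; by scaling it suffices to treat $t = \pm 1$, reducing the claim to $\inf_{\lambda \in \R}\int_{0}^{1}(1-r)\abs{1+r\lambda}^{p-2}\,dr \ge \tfrac{1}{Cp}$, where the adversarial regime is $\lambda$ near $-2$ (there $1+r\lambda$ nearly vanishes over part of $[0,1]$ and the integral is of order $1/p$). (ii) \emph{Upper bounds:} always $g(t,s) \le \tfrac{1}{2}(\abs{t}+\abs{s})^{p-2}s^{2}$; hence if $\abs{s} \le \abs{t}/p$ then $g(t,s) \le \tfrac{e}{2}\abs{t}^{p-2}s^{2}$ (using $(1+1/p)^{p-2} \le e$), while if $\abs{s} > \abs{t}/p$ then $g(t,s) \le \tfrac{1}{2}(p+1)^{p-2}\abs{s}^{p}$.

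For part (a), optimality of $\hat{w}_{p}$ gives $R_{p,n}(\hat{w}_{p}) \le R_{p,n}(w^{*})$; expanding each loss difference as $\ell_{p}'(t_{i})\langle\hat v, X_{i}\rangle + g(t_{i}, \langle\hat v, X_{i}\rangle)$ and using $\mathbb{E}[\nabla\ell_{p}(t)] = \nabla R_{p}(w^{*}) = 0$ turns this into $\tfrac{1}{n}\sum_{i} g(t_{i}, \langle\hat v, X_{i}\rangle) \le \langle\hat v, -G_{n}\rangle \le \norm{\hat v}_{H_{p}}\norm{G_{n}}_{H_{p}^{-1}}$, where $G_{n} := \tfrac{1}{n}\sum_{i}\nabla\ell_{p}(t_{i})$ has mean zero. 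By fact (i) the left-hand side is at least $\tfrac{1}{Cp}\cdot\tfrac{1}{n}\sum_{i}\langle\hat v, Z_{i}\rangle^{2}$. Since $n \ge 196\sigma^{2}_{p}(d + 2\log(4/\delta))$, the lower-tail bound for random quadratic forms of \citet{oliveiraLowerTailRandom2016}, applied to the i.i.d.\ vectors $Z_{1},\dots,Z_{n}$ (whose second-moment matrix is $H_{p}$ and whose $L^{4}$-to-$L^{2}$ constant is $\sigma_{p}$), gives, with probability at least $1-\delta/2$, that $\tfrac{1}{n}\sum_{i}\langle v, Z_{i}\rangle^{2} \ge \tfrac{1}{2}\norm{v}_{H_{p}}^{2}$ for all $v \in \R^{d}$; combined with the above this yields $\norm{\hat v}_{H_{p}} \le 2Cp\,\norm{G_{n}}_{H_{p}^{-1}}$ on that event. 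Because $G_{n}$ is a mean-zero average, $\mathbb{E}\,\norm{G_{n}}_{H_{p}^{-1}}^{2} = \deff/n$, so Markov's inequality gives $\norm{G_{n}}_{H_{p}^{-1}}^{2} \le 2\deff/(n\delta)$ with probability at least $1-\delta/2$. On the intersection of the two events (probability at least $1-\delta$) we conclude $\norm{\hat v}_{H_{p}}^{2} \le 8C^{2}p^{2}\deff/(n\delta)$.

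For part (b), conditioning on the data, $R_{p}(\hat{w}_{p}) - R_{p}(w^{*}) = \mathbb{E}[g(t, \langle\hat v, X\rangle)]$, the linear term vanishing again by first-order optimality. Splitting this expectation over $\{\abs{\langle\hat v, X\rangle} \le \abs{t}/p\}$ and its complement and applying the two bounds of fact (ii) gives $R_{p}(\hat{w}_{p}) - R_{p}(w^{*}) \le \tfrac{e}{2}\norm{\hat v}_{H_{p}}^{2} + \tfrac{1}{2}(p+1)^{p-2}\norm{\hat v}_{L^{p}}^{p}$. By the definition of $c_{p} = C_{L^{p}\to(L^{2},p)}$ and the identity $\norm{\hat v}_{L^{2},p} = \norm{\hat v}_{H_{p}}$, we have $\norm{\hat v}_{L^{p}}^{p} \le (c_{p}^{2}\norm{\hat v}_{H_{p}}^{2})^{p/2}$; plugging in the bound from part (a) turns the first term into $O(p^{2}\deff/(n\delta))$ and the second into $(p+1)^{p-2}(8C^{2}p^{2})^{p/2}\,(c_{p}^{2}\deff/(n\delta))^{p/2}$, and a crude numerical estimate ($(p+1)^{p-2}(8C^{2}p^{2})^{p/2} \le (512p^{4})^{p/2}$ and $4eC^{2} \le 2048$ for the universal $C$ of fact (i)) collapses these to the claimed bound. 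I expect the main obstacle to be fact (i): one must show that the curvature of $\ell_{p}$ near the origin degrades by only a factor linear in $p$, \emph{uniformly} over all $(t,s)$ rather than merely for $\abs{s}$ small, which forces a careful analysis of the adversarial regime $s \approx -2t$; a secondary point is invoking Oliveira's theorem with the precise constants so that the sample-size requirement is exactly the displayed "$196\sigma^{2}_{p}(d + 2\log(4/\delta))$", together with checking that the stated moments indeed render $\sigma^{2}_{p}, \deff$ finite and $H_{p}$ nonsingular. The remaining manipulations are routine.
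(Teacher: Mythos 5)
Your proposal is correct and follows essentially the same route as the paper: ERM optimality plus a pointwise second-order lower bound on the excess loss (with curvature constant degrading linearly in $p$), Oliveira's lower-tail bound applied to the reweighted design $Z=\abs{\inp{\opt}{X}-Y}^{(p-2)/2}X$, Markov's inequality for the gradient term, and then a quadratic-plus-$L^p$ upper bound on the population excess risk converted via $c_p$. The only difference is cosmetic: the paper imports your pointwise facts (i) and (ii) directly from Lemma 2.5 of \cite{adilFastAlgorithmsEll2022} (with explicit constants $\tfrac{1}{8(p-1)}$ and $\tfrac{2p}{p-1}$, $p^p$) rather than re-deriving them, so the step you flag as the main obstacle is exactly the content of that cited lemma.
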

Up to a constant factor that depends only on $p$ and the dependence on $\delta$, the bound of Theorem \ref{thm:pgeq2} is precisely of the form of the optimal bound (\ref{eq:asymptotic}). Indeed, as $p > 2$, the second term is $o(1/n)$. At the level of assumptions, the finiteness of the $p$-th moment of $Y$ and the components of $X$ is necessary to ensure that the risk $R_{p}$ is finite for all $w \in \R^{d}$. The last assumption $\Exp\brack{\abs{Y - \inp{\opt}{X}}^{2(p-2)} (X^j)^{4}} < \infty$ is a natural extension of the fourth moment assumption in Theorem \ref{thm:peq2}. In fact, all three assumptions in Theorem \ref{thm:pgeq2} reduce to those of Theorem \ref{thm:peq2} as $p \to 2$. It is worth noting that the constant $c_{p}$ has the alternative expression $\sup_{w \in \R^{d}\setminus \brace{0}} \brace{\norm{w}_{L^{p}}/\norm{w}_{\hessian}}$ by (\ref{eq:identities}), i.e. it is the norm equivalence constant between the $L^{p}$ norm and the norm induced by $H_{p}$. Using again (\ref{eq:identities}), we see that $c_p \to 1$ as $p \to 2$. As $p \to \infty$, $c_{p}$ grows, and we suspect in a dimension dependent way. However, this does not affect the asymptotic optimality of our rate as $c_{p}$ only enters an $o(1/n)$ term in our bound.

We now turn to the case of $p \in (1, 2)$ where we need a slightly stronger version of non-realizability.
\begin{restatable}{theorem}{pleq2}
\label{thm:pleq2}
    Let $p \in (1, 2)$ and $\delta \in (0, 1]$. Assume that $\Prob\paren{\abs{\inp{\opt}{X} - Y}^{2-p} > 0} = 1$ and $\Exp\brack{\abs{\inp{\opt}{X} - Y}^{2(p-2)}} < \infty$. Further, assume that $\Exp\brack{\abs{Y}^{p}} < \infty$, $\Exp\brack{(X^j)^{2}} < \infty$, $\Exp\brack{\abs{\inp{\opt}{X} - Y}^{2(p-2)}(X^{j})^{4}} < \infty$ for all $j \in [d]$. If
    \begin{equation*}
        n \geq 196 \sigmap (d + 2\log(4/\delta)),
    \end{equation*}
    then, with probability at least $1 - \delta$
    \begin{multline*}
        R_p(\erm) - R_p(\opt) \leq \frac{32768}{p-1} \frac{\deff}{n \delta} \\
        + \frac{1}{p-1} \paren*{\frac{2097152 \deff \sigma_{p}^{6-2p}c_{p}^{2-p} c^{*}_{p}}{n\delta}}^{1/(p-1)}
    \end{multline*}
    where we used $c^{*}_{p}$ to denote $\Exp\brack{\abs{Y - \inp{\opt}{X}}^{2(p-2)}}$ and $c_{p} \defeq \Tr\paren*{H_{p}^{-1}\Sigma}$ where $\Sigma \defeq \Exp\brack*{XX^{T}}$.
\end{restatable}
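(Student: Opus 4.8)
The plan is to combine convexity-based localization with a sharp two-sided bound on the curvature of $\ell_p$, reducing the problem to an estimate on $\norm{\erm - \opt}_{H_p}$. Write $\hat\Delta = \erm - \opt$, $\epsilon_i = \opterrori$, and $h_i = \inp{\Delta}{X_i}$ for a generic increment $\Delta$. Since $R_{p,n}$ is convex, it suffices to show that on each sphere $\brace{\norm{\Delta}_{H_p} = r}$ the empirical excess risk $R_{p,n}(\opt+\Delta) - R_{p,n}(\opt)$ is strictly positive, which then forces $\norm{\hat\Delta}_{H_p} \le r$. Expanding $\ell_p(\epsilon_i + h_i) - \ell_p(\epsilon_i) = \ell_p'(\epsilon_i)h_i + \brack*{\ell_p(\epsilon_i+h_i) - \ell_p(\epsilon_i) - \ell_p'(\epsilon_i)h_i}$ and summing, the first-order part equals $\inp{M_n}{\Delta}$ with $M_n = \tfrac1n\sum_{i=1}^n \nabla\ell_p(\epsilon_i)X_i$; the summands are i.i.d.\ with mean $\nabla R_p(\opt) = 0$ and $\Exp[\invhessnorm{\nabla\ell_p(\epsilon)X}^2] = \deff$, so Markov gives $\norm{M_n}_{H_p^{-1}} \le \sqrt{\deff/(n\delta')}$ with probability at least $1-\delta'$, whence $\abs{\inp{M_n}{\Delta}} \le \sqrt{\deff/(n\delta')}\,\norm{\Delta}_{H_p}$ for every $\Delta$.

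\paragraph{Two-sided curvature of $\ell_p$.}
The elementary estimate at the heart of the argument is that, for $p\in(1,2)$ and all $a,h$,
\begin{equation*}
    \ell_p(a+h)-\ell_p(a)-\ell_p'(a)h \;=\; h^2\int_0^1 (1-s)\,\abs{a+sh}^{p-2}\,\ds \;\in\; \brack*{\underline{c}_p\,\mu(a,h),\ \overline{c}_p\,\mu(a,h)},
\end{equation*}
where $\mu(a,h) \defeq \min(\abs{a}^{p-2}h^2,\,\abs{h}^p)$ and $\underline{c}_p,\overline{c}_p$ depend only on $p$, with $\overline{c}_p \asymp 1/(p-1)$; one obtains this by estimating the integral in the regimes $\abs h\le\abs a/2$ and $\abs h>\abs a/2$, and crucially in the former the remainder is $\ge\tfrac13\abs a^{p-2}h^2$ with an \emph{absolute} constant. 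Two consequences follow. Applying the upper bound to $R_p$ at $\erm$, and using that $\inp{\nabla R_p(\opt)}{\hat\Delta}=0$ holds identically (so this term is harmless even though $\erm$ is random),
\begin{equation*}
    R_p(\erm) - R_p(\opt) \;\le\; \overline{c}_p\,\Exp\brack*{\mu\paren*{\epsilon,\inp{\hat\Delta}{X}}} \;\le\; \overline{c}_p\,\Exp\brack*{\abs{\epsilon}^{p-2}\inp{\hat\Delta}{X}^2} \;=\; \overline{c}_p\,\norm{\hat\Delta}_{H_p}^2 ,
\end{equation*}
so it remains to bound $\norm{\hat\Delta}_{H_p}$. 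Applying the lower bound on the empirical side, and discarding all indices with $\abs{h_i}>\abs{\epsilon_i}/2$ by convexity, gives on each sphere
\begin{equation*}
    R_{p,n}(\opt+\Delta) - R_{p,n}(\opt) \;\ge\; \inp{M_n}{\Delta} + \tfrac13\cdot\tfrac1n\sum_{i=1}^n \abs{\epsilon_i}^{p-2} h_i^2\,\ind{\abs{h_i}\le\abs{\epsilon_i}/2}.
\end{equation*}

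\paragraph{Lower bound on the empirical curvature.}
Keeping the $p$-independent constant above pays off because the indices $\brace{\abs{h_i}\le\abs{\epsilon_i}/2}$ carry almost all the mass when $\norm{\Delta}_{H_p}$ is small. Writing $\tfrac1n\sum_i\abs{\epsilon_i}^{p-2}h_i^2\ind{\abs{h_i}\le\abs{\epsilon_i}/2} = \tfrac1n\sum_i\norm{\Delta}_{\nabla^2\ell_p(\epsilon_i)}^2 - \tfrac1n\sum_i\norm{\Delta}_{\nabla^2\ell_p(\epsilon_i)}^2\ind{\abs{h_i}>\abs{\epsilon_i}/2}$, the first sum is controlled by the lower-tail matrix deviation bound of \citet{oliveiraLowerTailRandom2016}: since $\Exp[\norm{\Delta}_{\nabla^2\ell_p(\epsilon)}^4]\le\sigma_p^2\norm{\Delta}_{H_p}^4$ by the very definition of $\sigma_p^2$, the hypothesis $n\ge 196\sigma_p^2(d+2\log(4/\delta))$ yields $\tfrac1n\sum_i\norm{\Delta}_{\nabla^2\ell_p(\epsilon_i)}^2\ge\tfrac12\norm{\Delta}_{H_p}^2$ for all $\Delta$ with probability at least $1-\delta'$. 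The residual term is lower order: because $\epsilon\neq0$ a.s., Markov with exponent $2(2-p)$ together with a H\"older interpolation against the fourth-moment bound gives $\Prob(\abs{\inp{\Delta}{X}}>\abs{\epsilon}/2)\lesssim\sigma_p^{2-p}(c_p^*)^{p/2}\norm{\Delta}_{H_p}^{2(2-p)}$, and Cauchy--Schwarz then bounds the expected residual by $\lesssim\sigma_p^{(4-p)/2}(c_p^*)^{p/4}\norm{\Delta}_{H_p}^{4-p}=o(\norm{\Delta}_{H_p}^2)$; truncating each summand at a level $\asymp\sigma_p^2\norm{\Delta}_{H_p}^2$ and invoking the $O(d)$ VC dimension of the slab class $\brace{x\mapsto\ind{\abs{\inp{\Delta}{x}}>\tau_i}}$ upgrades this to a bound uniform over the sphere, at the price of an extra $\sigma_p^2\norm{\Delta}_{H_p}^2\sqrt{d/n}$ absorbed by $n\ge196\sigma_p^2 d$. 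Hence, for $\norm{\Delta}_{H_p}\le r_0$ with $r_0^{2-p}\asymp 1/(\sigma_p^{(4-p)/2}(c_p^*)^{p/4})$, the empirical curvature is at least $\tfrac18\norm{\Delta}_{H_p}^2$.

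\paragraph{Conclusion and the main obstacle.}
Combining with the multiplier estimate, on spheres of radius $r\le r_0$ the empirical excess risk is strictly positive once $r>8\norm{M_n}_{H_p^{-1}}$; so if $8\sqrt{\deff/(n\delta')}\le r_0$ then $\norm{\hat\Delta}_{H_p}\le 8\norm{M_n}_{H_p^{-1}}$, and the displayed upper bound gives $R_p(\erm)-R_p(\opt)\le 64\,\overline{c}_p\,\norm{M_n}_{H_p^{-1}}^2\asymp(p-1)^{-1}\deff/(n\delta)$ --- the leading term. In the complementary ``transition'' regime one instead keeps the $\abs{h_i}^p$ branch of the minimum and extracts a $p$-homogeneous lower bound $\gtrsim_p\norm{\hat\Delta}_{L^p}^p$ via a small-ball lower isometry (using Lemma~\ref{lem:smallball}); combined with the finite-dimensional equivalence between $\norm{\cdot}_{L^p}$ and $\norm{\cdot}_{H_p}$ --- whose effective constant, when estimated empirically, collects $\sigma_p$, $c_p=C_{L^2\to(L^2,p)}^2$, $c_p^*$ and a factor of $d$ --- the optimality inequality becomes $\norm{\hat\Delta}_{H_p}^{p-1}\lesssim(\text{const})\norm{M_n}_{H_p^{-1}}$, which after squaring produces the lower-order term $\big(\sigma_p^{6-2p}d^{2-p}c_p^{2-p}c_p^*\,\deff/(n\delta)\big)^{1/(p-1)}$; the exponent $1/(p-1)$ is exactly the signature of this $p$-homogeneous branch. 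Choosing $\delta'$ a fixed fraction of $\delta$ and collecting constants yields the claim. The main obstacle throughout is that $\ell_p''(t)=\abs{t}^{p-2}\to\infty$ as $t\to0$: the risk is genuinely non-quadratic near its minimizer, the curvature dominates a quadratic only after an unbounded contribution is excised, and quantifying that excision uniformly over the localization ball --- tracked through the two weak-moment constants $\sigma_p^2$ and $c_p^*=\Exp[\abs{\epsilon}^{2(p-2)}]$ and a VC control of the residual process --- is precisely what forces the dimension factor $d^{2-p}$ and the exponent $1/(p-1)$ in the second term.
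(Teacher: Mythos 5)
Your high-level architecture is the right one and matches the paper's: localize via convexity of $R_{p,n}$, control the multiplier term by Markov's inequality (giving $\sqrt{\deff/(n\delta)}$), lower-bound the empirical curvature by a quadratic in $\hessnorm{\cdot}$ on a small ball and by a $p$-homogeneous quantity outside it, and finish with the quadratic population upper bound $R_p(\erm)-R_p(\opt)\lesssim (p-1)^{-1}\hessnorm{\erm-\opt}^2$. The leading $\deff/(n\delta)$ term and the exponent $1/(p-1)$ on the second term are correctly diagnosed.

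The genuine gap is in the one step that is actually hard: the uniform-over-the-sphere lower bound on the empirical curvature (the paper's Proposition \ref{prop:lower_bound_pleq2}). Your plan is to write the curvature as the full quadratic form $\tfrac1n\sum_i\norm{\Delta}^2_{\nabla^2\ell_p(\epsilon_i)}$ (handled by Oliveira) minus the residual $\tfrac1n\sum_i\abs{\epsilon_i}^{p-2}h_i^2\,\ind{\abs{h_i}>\abs{\epsilon_i}/2}$, and to uniformize the residual by ``truncating each summand at level $\asymp\sigma_p^2\hessnorm{\Delta}^2$ and invoking the VC dimension of the slab class.'' This is not carried out, and it is not clear it can be at the claimed cost: on the event $\abs{h_i}>\abs{\epsilon_i}/2$ the weight $\abs{\epsilon_i}^{p-2}$ blows up as $\epsilon_i\to0$, so the summands are unbounded and heavy-tailed, the indicator depends on $\Delta$, and after truncation you still must control the contribution above the truncation level uniformly over the sphere --- none of which a bare VC bound on indicators provides. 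Likewise, the ``small-ball lower isometry'' yielding $\gtrsim_p\norm{\hat\Delta}_{L^p}^p$ in the far regime is asserted, not proved, and would require Mendelson-type machinery. The paper sidesteps both difficulties with two devices you are missing: (i) the homogeneity inequality $\gamma_p(t,\lambda x)\ge\min\{\lambda^2,\lambda^p\}\gamma_p(t,x)$, which reduces everything to a single infimum over the sphere of a fixed radius $\eps$ and produces the $p$-homogeneous branch for free (no small-ball argument); and (ii) a \emph{deterministic} truncation $\tilde X=X\,\ind{\norm{X}_{H_p^{-1}}\le T}$ together with flooring $\abs{\epsilon_i}$ at $\beta=T\eps$, which forces $\gamma_p$ into its quadratic branch for every $\Delta$ on that sphere, so that one application of Proposition \ref{prop:lower_tail} to $Z=\max\{\abs{\epsilon},\beta\}^{(p-2)/2}\tilde X$ gives the uniform empirical bound, and the truncation error is paid only at the population level via Cauchy--Schwarz and Markov. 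This is also where the factors $d^{2-p}$, $c_p^{2-p}$, $\sigma_p^{6-2p}$ actually come from (the tail bound $\Prob(\norm{X}_{H_p^{-1}}>T)\le dc_p/T^2$ and the optimization over $T$); your bookkeeping assigns them instead to an empirical $L^p$--$H_p$ equivalence that is never established, so the constants in your sketch do not cohere into the stated bound.
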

Just like the bounds of Theorems \ref{thm:peq2} and \ref{thm:pgeq2}, the bound of Theorem \ref{thm:pleq2} is asymptotically optimal up to a constant factor that depends only on $p$. Indeed, since $1 < p < 2$, $1/(p-1) > 1$, and the second term is $o(1/n)$. At the level of assumptions, we have two additional conditions compared to Theorem \ref{thm:pgeq2}. First, we require the existence of the second moment of the covariates instead of just the $p$-th moment. Second, we require a stronger version of non-realizability by assuming the existence of the $2(2-p)$ negative moment of $\abs{\inp{\opt}{X} - Y}$.  In the majority of applications, an intercept variable is included as a covariate, i.e.\ $X^{1} = 1$, so that this negative moment assumption is already implied by the standard assumption $\Exp\brack{\abs{Y - \inp{\opt}{X}}^{2(p-2)} (X^j)^{4}} < \infty$. In the rare case where an intercept variable is not included, any negative moment assumption on $\abs{\inp{\opt}{X} - Y}$ can be used instead, at the cost of a larger factor in the $o(1/n)$ term.

Finally, it is worth noting that for the cases $p \in [1, 2)$, there are situations where the asymptotic bound (\ref{eq:asymptotic}) does not hold, as the limiting distribution of the coefficients $\hat{w}_{p}$ as $n \to \infty$ does not necessarily converge to a Gaussian, and depends heavily on the distribution of $\inp{\opt}{X} - Y$, see e.g.\ \citet{laiOverviewAsymptoticProperties2005} and \citet{knightLimitingDistributionsSb1998}. Overall, we suspect that perhaps a slightly weaker version of our assumptions is necessary for a fast rate like (\ref{eq:asymptotic}) to hold.

\section{Proofs}
\label{sec:proofs}
\subsection{Proof of Theorem \ref{thm:peq2}}
Here we give a detailed proof of Theorem \ref{thm:peq2}. While the core technical result can be deduced by combining results from \citet{oliveiraLowerTailRandom2016} and \citet{lecuePerformanceEmpiricalRisk2016}, here we frame the proof in a way that makes it easy to extend to the cases $p \in (1, \infty)$, and differently from either paper. We split the proof in three steps. First notice that since the loss is a quadratic function of $w$, we can express it exactly using a second order Taylor expansion around the minimizer $w_{2}^{*}$
\begin{equation*}
    \ell_{2}(\error) - \ell_{2}(\inp{w^{*}_{2}}{X} - Y) = \inp{\nabla \ell_2(\inp{w^{*}_{2}}{X} - Y)}{w - w_{2}^{*}} + \frac{1}{2} \norm{w - w_{2}^{*}}_{\nabla^{2} \ell_2(\inp{w^{*}_{2}}{X} - Y)}^{2}.
\end{equation*}
Taking empirical averages and expectations of both sides respectively shows that the excess empirical risk and excess risk also admit such an expansion
\begin{align}
    R_{2, n}(w) - R_{2, n}(w^{*}_{2}) &= \inp{\nabla R_{2, n}(w_{2}^{*})}{w - w_{2}^{*}} + \frac{1}{2}\norm{w - w_{2}^{*}}_{H_{2,n}}^{2}, \nonumber \\
    R_2(w) - R_{2}(w^{*}_{2}) &= \frac{1}{2} \norm{w - w_{2}^{*}}_{H_2}^{2}, \label{eq:exactexcessrisk}
\end{align}
where in the second equality we used that the gradient of the risk vanishes at the minimizer $w_2^{*}$. Therefore, to bound the excess risk, it is sufficient to bound the norm $\norm{w - w_{2}^{*}}_{H_{2}}$. This is the goal of the second step, where we use two ideas. First, by definition, the excess empirical risk of the empirical risk minimizer satisfies the upper bound
\begin{equation}
\label{eq:upperboundempiricalexcessrisk}
    R_{2, n}(\hat{w}_{2}) - R_{2, n}(w_{2}^{*}) \leq 0.
\end{equation}
Second, we use the Cauchy-Schwartz inequality to lower bound the excess empirical risk by
\begin{equation}
\label{eq:added}
    R_{2, n}(\hat{w}_{2}) - R_{2, n}(w_{2}^{*}) \geq - \norm{\nabla R_{2, n}(w_{2}^{*})}_{H_{2}^{-1}} \norm{\hat{w}_{2} - w_{2}^{*}}_{H_2} + \frac{1}{2} \norm{\hat{w}_2 - w_{2}^{*}}_{H_{2,n}}^{2},
\end{equation}
and we further lower bound it by deriving high probability bounds on the two random terms $\norm{\nabla R_{2, n}(w_{2}^{*})}_{H_{2}^{-1}}$ and $\norm{\hat{w}_2 - w_{2}^{*}}_{H_{2,n}}^{2}$. The first can easily be bounded using Chebyshev's inequality and the elementary fact that the variance of the average of $n$ \iid random variables is the variance of their common distribution divided by $n$. Here we state the result for all $p \in (1, \infty)$; the straightforward proof can be found in the Appendix \ref{sec:peq2}.
\begin{lemma}
\label{lem:highprob1}
    Let $p \in (1, \infty)$. If $p \in (1, 2)$, let the assumptions of Theorem \ref{thm:pleq2} hold. Then with probability at least $1 - \delta/2$
    \begin{equation*}
        \invhessnorm{\grademprisk{\opt}} \leq \sqrt{2\deff/(n\delta)}.
    \end{equation*}
\end{lemma}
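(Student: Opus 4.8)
The plan is a textbook second-moment estimate followed by Chebyshev's (equivalently Markov's) inequality. First I would dispose of the degenerate case: if $\deff = \infty$ the asserted bound holds vacuously, so I may assume $\deff < \infty$. Under this assumption each gradient summand $Z_i \defeq \gradlossi$ satisfies $\Exp[\invhessnorm{Z_i}^2] = \deff < \infty$, so $\invhessnorm{Z_i} \in L^2 \subseteq L^1$ and $\Exp[Z_i]$ is well defined. The structural fact I would use next is that $\Exp[Z_i] = \gradrisk{\opt} = 0$: the risk $R_p$ is differentiable at $\opt$ (for $p \ge 2$ because $\lp$ is $C^1$ with $\nabla\lp(t) = \sgn(t)\abs{t}^{p-1}/(p-1)$, and the assumed finiteness of the $p$-th moments of $Y$ and of the $X^j$ provides domination; for $p \in (1,2)$ this differentiability, together with the exchange of gradient and expectation, is exactly the role played by the assumptions of Theorem \ref{thm:pleq2}, via the standard differentiation-under-the-integral theorem, e.g.\ Theorem 2.27 in \cite{follandRealAnalysisModern2013a}), and $\opt$ is by definition a minimizer of $R_p$, so its gradient vanishes. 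Since $\grademprisk{\opt} = \frac1n \sum_{i=1}^n Z_i$ is an average of $n$ \iid copies of $\gradloss$, it is therefore a mean-zero average.

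Second, I would compute the second moment of $\invhessnorm{\grademprisk{\opt}}$ by expanding the squared norm: $\invhessnorm{\frac1n\sum_i Z_i}^2 = \frac{1}{n^2}\sum_{i,j}\inp{Z_i}{Z_j}_{H_p^{-1}}$. Taking expectations, the off-diagonal terms vanish because for $i \ne j$ the vectors $Z_i$ and $Z_j$ are independent with zero mean, so $\Exp[\inp{Z_i}{Z_j}_{H_p^{-1}}] = \inp{\Exp Z_i}{\Exp Z_j}_{H_p^{-1}} = 0$; this leaves $\Exp[\invhessnorm{\grademprisk{\opt}}^2] = \frac1n \Exp[\invhessnorm{\gradloss}^2] = \deff/n$. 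Applying Markov's inequality to the nonnegative random variable $\invhessnorm{\grademprisk{\opt}}^2$ with threshold $2\deff/(n\delta)$ yields $\Prob(\invhessnorm{\grademprisk{\opt}}^2 > 2\deff/(n\delta)) \le (\deff/n)\big/(2\deff/(n\delta)) = \delta/2$. Passing to the complementary event and taking square roots gives the claimed bound.

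I do not expect a genuine obstacle here; the only step needing care is the justification that $\Exp[\gradloss] = \gradrisk{\opt} = 0$, i.e.\ that one may differentiate the risk through the expectation at $\opt$. For $p \ge 2$ this follows from dominated convergence using the stated moment hypotheses, since $\nabla\lp$ is continuous and grows polynomially; for $p \in (1,2)$ it is precisely what the extra hypotheses of Theorem \ref{thm:pleq2} (in particular $\Prob(\abs{\opterror}^{2-p} > 0) = 1$ together with the negative-moment bound) are there to guarantee. Everything else is the elementary observation that averaging $n$ \iid terms divides the second moment by $n$, so the proof is short once this differentiability point is in place.
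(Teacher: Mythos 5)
Your proposal is correct and follows essentially the same route as the paper: expand the squared $H_p^{-1}$-norm of the averaged gradient, kill the off-diagonal terms using independence and the vanishing of $\nabla R_p(w_p^*)$ (justified by differentiating under the expectation, which the paper handles in an appendix lemma), obtain the second moment $\deff/n$, and conclude by Markov's inequality. Your extra care about the degenerate case $\deff = \infty$ and the explicit justification of $\Exp[\nabla\ell_p(\inp{\opt}{X}-Y)] = 0$ is welcome but does not change the argument.
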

For the second random term $\norm{\hat{w}_2 - w_{2}^{*}}_{H_{2,n}}^{2}$, we use Theorem 3.1 of \citet{oliveiraLowerTailRandom2016}, which we restate here, emphasizing that the existence of fourth moments of the components of the random vector is enough to ensure the existence of the needed norm equivalence constant. 
\begin{proposition}[Theorem 3.1, \citet{oliveiraLowerTailRandom2016}]
\label{prop:lower_tail}
Let $Z \in \R^{d}$ be a random vector satisfying $\Exp\brack{Z_j^{4}} < \infty$ for all $j \in [d]$ and assume that $\Prob\paren*{\inp{v}{Z} = 0} = 1$ only if $v = 0$. For $p \in [1, \infty)$ and $v \in \R^{d}$, define
\begin{align*}
    \norm{v}_{L^{p}} &\defeq \Exp\brack{(\inp{v}{Z})^{p}}^{1/p}, & \sigma^{2} &\defeq \paren*{\sup_{v \in \R^{d} \setminus \brace{0}} \norm{v}_{L^{4}}/\norm{v}_{L^{2}}}^{4}.
\end{align*}
Let $(Z_i)_{i=1}^{n}$ be \iid samples of $Z$. Then, with probability at least  $1 - \delta$, for all $v \in \R^{d}$,
\begin{equation*}
    \frac{1}{n} \sum_{i=1}^{n} \inp{v}{Z_i}^{2} \geq \paren*{1 - 7 \sigma \sqrt{\frac{d + 2 \log(2/\delta)}{n}}} \norm{v}_{L^{2}}^{2}.
\end{equation*}
\end{proposition}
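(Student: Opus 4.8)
The plan is to observe that the displayed inequality is verbatim the conclusion of Theorem~3.1 of \citet{oliveiraLowerTailRandom2016}, so the only piece not literally contained in that reference is \emph{finiteness} of the constant $\sigma^{2}$: there it is assumed, whereas here I want to deduce it from $\Exp\brack{Z_j^{4}} < \infty$ for all $j \in [d]$ together with the non-degeneracy hypothesis. Accordingly, the first (and essentially the only new) step is to check that $\norm{\cdot}_{L^{2}}$ and $\norm{\cdot}_{L^{4}}$ are genuine norms on $\R^{d}$ and then invoke equivalence of norms in finite dimension. Positive homogeneity and the triangle inequality hold by Minkowski's inequality in $L^{2}(\Prob)$ and $L^{4}(\Prob)$; finiteness of $\norm{v}_{L^{4}}$ follows by expanding $\inp{v}{Z}^{4} = (\sum_j v_j Z_j)^{4}$, bounding every mixed fourth moment by $\max_j \Exp\brack{Z_j^{4}}$ with the generalized \Holder inequality, and summing, which gives $\norm{v}_{L^{4}}^{4} \le d^{3}\,(\max_j \Exp\brack{Z_j^{4}})\,\norm{v}_{2}^{4} < \infty$; and positive definiteness of $\norm{\cdot}_{L^{2}}$ is exactly the hypothesis that $\Prob(\inp{v}{Z} = 0) = 1$ forces $v = 0$, which moreover makes $\Exp\brack{ZZ^{T}}$ positive definite, so $\norm{v}_{L^{2}}^{2} = \inp{v}{\Exp\brack{ZZ^{T}} v} \ge \lambda_{\min}(\Exp\brack{ZZ^{T}})\,\norm{v}_{2}^{2}$. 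Combining, $\sigma^{2} = C_{L^{4} \to L^{2}}^{4} \le d^{3} \max_j \Exp\brack{Z_j^{4}} / \lambda_{\min}(\Exp\brack{ZZ^{T}})^{2} < \infty$, which is all that Theorem~3.1 of \citet{oliveiraLowerTailRandom2016} requires.

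For completeness I would also reproduce the structure of Oliveira's lower-tail estimate (one may instead simply cite it). By joint $2$-homogeneity of $v \mapsto \frac{1}{n}\sum_i \inp{v}{Z_i}^{2}$ and of $\norm{v}_{L^{2}}^{2}$, it suffices to prove the bound uniformly over the $L^{2}$-unit sphere $S = \{v \in \R^{d} : \norm{v}_{L^{2}} = 1\}$, on which $\norm{v}_{2}$ stays in a compact interval bounded away from $0$ and $\infty$ by the equivalence just established. The uniformization is a PAC-Bayesian (Catoni-type) argument: first smooth the quadratic form by a Gaussian perturbation $u = v + g$ with $g \sim \Gsn(0, \beta^{2} I_d)$, using that $\Exp_g \inp{v+g}{z}^{2} = \inp{v}{z}^{2} + \beta^{2}\norm{z}_{2}^{2} \ge \inp{v}{z}^{2}$ for every $z$, so a lower bound on the smoothed form gives one on the original up to the correction $\beta^{2}\cdot\frac{1}{n}\sum_i \norm{Z_i}_{2}^{2}$, which concentrates near $\beta^{2}\Exp\norm{Z}_{2}^{2}$; then truncate $t^{2}$ at a level $M$, which only decreases the empirical average while decreasing the population average by at most $\Exp\brack{\inp{u}{Z}^{2}\ind{\inp{u}{Z}^{2} > M}} \le \Exp\brack{\inp{u}{Z}^{4}}/M \le \sigma^{2}\norm{u}_{L^{2}}^{4}/M$ (this is where $\sigma^{2}$ enters); and finally apply the PAC-Bayes inequality with prior $\Gsn(0, \beta^{2} I_d)$ and posteriors $\Gsn(v, \beta^{2} I_d)$ — whose $\mathrm{KL}$ divergence is $\norm{v}_{2}^{2}/(2\beta^{2})$ — to the bounded map $u \mapsto \frac{1}{n}\sum_i (\inp{u}{Z_i}^{2} \wedge M)$ integrated against these posteriors. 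Optimizing the inverse temperature in the PAC-Bayes bound and then the parameters $\beta$ and $M$, balancing the $\mathrm{KL}$ term, the smoothing and truncation biases, and the second-moment term, yields the advertised factor $1 - 7\sigma\sqrt{(d + 2\log(2/\delta))/n}$.

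The genuinely delicate point, and where I expect the bookkeeping to be hardest, is this PAC-Bayes uniformization: $\beta$ and $M$ must be chosen so that, simultaneously, the $\mathrm{KL}$ term is of the right order for \emph{every} $v \in S$, the second-moment (variance) term produced by the PAC-Bayes inequality for the smoothed-and-truncated quadratic stays $O(\sigma^{2}\sqrt{d/n})$ — the dimension entering through the trace term $\beta^{2}\tr(\Exp\brack{ZZ^{T}})$ in the expected smoothed second moment — and the two biases from smoothing and truncation are lower order; arranging all three to cooperate with the advertised constant is precisely Oliveira's contribution. If one is content to cite \citet{oliveiraLowerTailRandom2016} for the lower-tail inequality itself, the proof collapses to the first paragraph and there is no remaining obstacle; the last two paragraphs merely record what is being imported.
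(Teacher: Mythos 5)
Your proposal is correct and matches the paper's treatment: the paper likewise imports the lower-tail inequality verbatim from Theorem~3.1 of Oliveira and supplies only the observation that $\Exp\brack{Z_j^{4}} < \infty$ for all $j$ together with the non-degeneracy hypothesis makes $\norm{\cdot}_{L^{2}}$ and $\norm{\cdot}_{L^{4}}$ genuine norms on $\R^{d}$, whence $\sigma^{2} < \infty$ by equivalence of norms in finite dimension. Your explicit bound $\sigma^{2} \le d^{3}\max_j \Exp\brack{Z_j^{4}}/\lambda_{\min}(\Exp\brack{ZZ^{T}})^{2}$ and the sketch of Oliveira's PAC-Bayesian argument are additional material beyond what the paper records, but they are consistent with it.
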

Using this result we can immediately deduce the required high probability lower bound on the second random term $\norm{\hat{w}_2 - w^{*}_{2}}_{H_{2, n}}^{2}$; we leave the obvious proof to Appendix \ref{sec:peq2}.
\begin{corollary}
\label{cor:high_prob_2}
    Under the assumptions of Theorem \ref{thm:peq2}, if $n \geq 196 \sigma_{2}^{2}(d + 2 \log(4/\delta))$, then with probability at least $1 - \delta/2$, for all $w \in \R^{d}$,
    \begin{equation*}
        \norm{w - w_{2}^{*}}_{H_{2,n}}^{2} \geq \frac{1}{2}\norm{w - w_{2}^{*}}_{H_{2}}^{2}.
    \end{equation*}
\end{corollary}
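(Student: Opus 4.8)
The plan is to reduce the statement to a single invocation of Proposition \ref{prop:lower_tail} applied to $Z = X$. First I would record the two identifications that make this possible. Since $\nabla^2 \ell_2(\inp{w}{X} - Y) = X X^T$ (the observation used right after (\ref{eq:identities})), we have $H_{2,n} = \frac{1}{n}\sum_{i=1}^n X_i X_i^T$ and $H_2 = \Exp\brack*{X X^T}$, hence for every $v \in \R^{d}$,
\begin{equation*}
    \norm{v}_{H_{2,n}}^2 = \frac{1}{n}\sum_{i=1}^n \inp{v}{X_i}^2, \qquad \norm{v}_{H_2}^2 = \Exp\brack*{\inp{v}{X}^2} = \norm{v}_{L^2}^2 .
\end{equation*}
Thus, taking $v = w - w_{2}^{*}$, the claimed inequality is exactly a lower-tail bound on $\frac{1}{n}\sum_i \inp{v}{X_i}^2$ in terms of $\norm{v}_{L^2}^2$, uniformly over $v \in \R^{d}$.

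Next I would check the hypotheses of Proposition \ref{prop:lower_tail} are in force: the support of $X$ is not contained in any hyperplane (our standing assumption), and $\Exp\brack*{(X^j)^4} < \infty$ for all $j \in [d]$ by the assumptions of Theorem \ref{thm:peq2}; as noted when that theorem was stated, the latter also guarantees $\sigma_2^2 = C_{L^{4} \to L^{2}}^{4} < \infty$, and the quantity $\sigma$ appearing in Proposition \ref{prop:lower_tail} with $Z = X$ coincides with our $\sigma_2$ (the even moments in the two definitions of $\norm{\cdot}_{L^p}$ agree). Applying Proposition \ref{prop:lower_tail} with confidence parameter $\delta/2$ in place of $\delta$ then yields, with probability at least $1 - \delta/2$, simultaneously for all $w \in \R^{d}$,
\begin{equation*}
    \norm{w - w_{2}^{*}}_{H_{2,n}}^{2} \geq \left(1 - 7 \sigma_2 \sqrt{\frac{d + 2\log(4/\delta)}{n}}\,\right) \norm{w - w_{2}^{*}}_{H_2}^{2} .
\end{equation*}

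Finally I would discharge the sample-size condition. If $n \geq 196\, \sigma_2^2 \paren*{d + 2\log(4/\delta)}$, then $49\, \sigma_2^2 \paren*{d + 2\log(4/\delta)}/n \leq 1/4$, so $7\sigma_2\sqrt{\paren*{d + 2\log(4/\delta)}/n} \leq 1/2$ and the bracketed factor above is at least $1/2$, which is precisely the asserted bound. There is no genuine obstacle in this corollary: the only points that require any care are the bookkeeping of the matrix/norm identifications (so that the norm-equivalence constant in Proposition \ref{prop:lower_tail} is exactly $\sigma_2$) and propagating the constant $196$ correctly through the substitution $\delta \mapsto \delta/2$; everything substantive is already contained in Proposition \ref{prop:lower_tail}.
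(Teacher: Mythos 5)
Your proposal is correct and follows exactly the paper's own argument: identify $\norm{w - w_2^*}_{H_{2,n}}^2$ with the empirical average $\frac{1}{n}\sum_i \inp{w - w_2^*}{X_i}^2$, note the fourth-moment assumption makes Proposition \ref{prop:lower_tail} applicable with $Z = X$ and $\sigma = \sigma_2$, and check that the sample-size condition forces the prefactor to be at least $1/2$. The only difference is that you spell out the constant bookkeeping that the paper leaves implicit.
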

Combining Lemma \ref{lem:highprob1}, Corollary \ref{cor:high_prob_2}, and (\ref{eq:added}) yields that with probability at least $1 - \delta$
\begin{equation}
\label{eq:lowerboundexcessempiricalrisk}
    R_{2, n}(\hat{w}_{2}) - R_{2, n}(w_{2}^{*}) \geq -\sqrt{2\deff/(n\delta)} \,\, \norm{\hat{w}_2 - w_2^{*}}_{H_{2}} + \frac{1}{4} \norm{\hat{w}_2 - w_{2}^{*}}_{H_{2}}^{2}.
\end{equation}
Finally, combining (\ref{eq:upperboundempiricalexcessrisk}) and (\ref{eq:lowerboundexcessempiricalrisk}) gives that with probability at least $1 - \delta$
\begin{equation*}
    \norm{\hat{w}_2 - w_{2}^{*}}_{H_{2}} \leq 4 \sqrt{2\deff/(n\delta)}. 
\end{equation*}
Replacing in (\ref{eq:exactexcessrisk}) finishes the proof. \qed

\subsection{Proof Sketch of Theorem \ref{thm:pgeq2}}
The main challenge in moving from the case $p=2$ to the case $p \in (2, \infty)$ is that the second order Taylor expansion of the loss is no longer exact. The standard way to deal with this problem is to assume that the loss is upper and lower bounded by quadratic functions, i.e. that it is smooth and strongly convex. Unfortunately, as discussed in Section \ref{sec:introduction}, the $\ell_{p}$ loss is not strongly convex for any $p > 2$, so we need to find another way to deal with this issue. Once this has been resolved however, the strategy we used in the proof of Theorem \ref{thm:peq2} can be applied almost verbatim to yield the result. Remarkably, a result of \cite{adilFastAlgorithmsEll2022} allows us to upper and lower bound the $p$-th power loss for $p \in (2, \infty)$ by its second order Taylor expansion around a point, up to some residual terms. An application of this result yields the following Lemma.
\begin{lemma}
\label{lem:boundsgeq2}
Let $p \in (2, \infty)$. Then:
    \begin{align}
        \emprisk{w} - \emprisk{\opt} &\geq \frac{1}{8(p-1)} \emphessnorm{w - \opt}^{2} + \inp{\grademprisk{\opt}}{w - \opt}, \label{eq:lowerboundpgeq2}\\
        \risk{w} - \risk{\opt} &\leq \frac{2p}{(p-1)} \hessnorm{w - \opt}^{2} + p^{p} \norm{w - \opt}_{L^{p}}^{p}. \label{eq:upperboundpgeq2}
    \end{align}
\end{lemma}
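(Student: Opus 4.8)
The plan is to reduce both displayed inequalities to a single one-dimensional statement: a two-sided comparison of $\ell_{p}(x+\Delta)$ with its second-order Taylor expansion at the point $x$, valid for $p \ge 2$. With the normalization $\ell_{p}(t) = \abs{t}^{p}/[p(p-1)]$ one has $\ell_{p}'(t) = \abs{t}^{p-2}t/(p-1)$ and, crucially, $\ell_{p}''(t) = \abs{t}^{p-2}$; this is the reason for the rescaling advertised in the introduction. The result of \cite{adilFastAlgorithmsEll2022} cited just before the statement is of exactly this flavor: it sandwiches $\abs{x+\Delta}^{p} - \abs{x}^{p} - p\abs{x}^{p-2}x\Delta$ between a multiple of $\abs{x}^{p-2}\Delta^{2}$ and a multiple of $\abs{x}^{p-2}\Delta^{2} + \abs{\Delta}^{p}$, with constants polynomial in $p$. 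After dividing by the normalizing factor $p(p-1)$ (and loosening $p^{p-1}/(p-1) \le p^{p}$ in the last term, and discarding the nonnegative $\abs{\Delta}^{p}$ contribution to the lower bound), this becomes: for all $x, \Delta \in \R$,
\begin{equation*}
    \frac{1}{8(p-1)}\,\abs{x}^{p-2}\Delta^{2}
    \;\le\;
    \ell_{p}(x+\Delta) - \ell_{p}(x) - \ell_{p}'(x)\,\Delta
    \;\le\;
    \frac{2p}{p-1}\,\abs{x}^{p-2}\Delta^{2} + p^{p}\abs{\Delta}^{p}.
\end{equation*}
So the first step is simply to record this scalar inequality from \cite{adilFastAlgorithmsEll2022} and adjust it to the $1/[p(p-1)]$ normalization.

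Everything after that is substitution and averaging. For \eqref{eq:lowerboundpgeq2}, I would apply the lower bound pointwise with $x = \opterrori$ and $\Delta = \inp{w - \opt}{X_i}$, so that $x + \Delta = \inp{w}{X_i} - Y_i$, and then average over $i \in [n]$. Using $\nabla \ell_{p}(\inp{v}{X} - Y) = \ell_{p}'(\inp{v}{X} - Y)\,X$ and $\nabla^{2}\ell_{p}(\inp{v}{X} - Y) = \abs{\inp{v}{X} - Y}^{p-2} X X^{T}$, the middle term averages to $\emprisk{w} - \emprisk{\opt} - \inp{\grademprisk{\opt}}{w - \opt}$, and the left term averages to $\tfrac{1}{8(p-1)}\,\tfrac{1}{n} \sum_{i=1}^{n} \abs{\opterrori}^{p-2}\inp{w-\opt}{X_i}^{2} = \tfrac{1}{8(p-1)}\emphessnorm{w - \opt}^{2}$; rearranging gives the claim. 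This bound is deterministic, so no moment assumptions enter at this step. For \eqref{eq:upperboundpgeq2}, I would apply the upper bound with $x = \inp{\opt}{X} - Y$ and $\Delta = \inp{w - \opt}{X}$ and take expectations: the middle term becomes $\risk{w} - \risk{\opt} - \inp{\gradrisk{\opt}}{w - \opt} = \risk{w} - \risk{\opt}$, since the gradient of the risk vanishes at its minimizer (as already used in the proof of Theorem \ref{thm:peq2}), while the right-hand side becomes $\tfrac{2p}{p-1}\Exp\brack{\abs{\opterror}^{p-2}\inp{w-\opt}{X}^{2}} + p^{p}\Exp\brack{\abs{\inp{w-\opt}{X}}^{p}} = \tfrac{2p}{p-1}\hessnorm{w - \opt}^{2} + p^{p}\norm{w - \opt}_{L^{p}}^{p}$, using $\hessian = \Exp\brack{\nabla^{2}\ell_{p}(\opterror)}$ and the definition \eqref{def:norms} of $\norm{\cdot}_{L^{p}}$. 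Both expectations are finite under the standing assumptions of Theorem \ref{thm:pgeq2}: $\Exp\brack{\abs{Y}^{p}}, \Exp\brack{\abs{X^{j}}^{p}} < \infty$ control the $L^{p}$ term, and $\Exp\brack{\abs{\opterror}^{2(p-2)}(X^{j})^{4}} < \infty$ together with Cauchy--Schwarz controls $\hessnorm{\cdot}$.

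I expect the only genuine obstacle to be the first step: extracting the clean one-sided scalar bounds above, with usable constants, from \cite{adilFastAlgorithmsEll2022}. Their statement is likely phrased through a symmetrized ``$\gamma_{p}$''-type majorant/minorant or with somewhat different constants, so some care is needed to put it in the form used here, possibly conceding a bit in the constants. If a direct citation does not suffice, a self-contained proof of the scalar bound is elementary: the lower bound follows from convexity together with a lower bound on $\ell_{p}''$ along the segment between $x$ and $x + \Delta$ (equivalently, from the integral form of the Taylor remainder), and the upper bound follows from a case split according to whether $\abs{\Delta} \le \abs{x}$, where the remainder is controlled by a multiple of $\abs{x}^{p-2}\Delta^{2}$, or $\abs{\Delta} > \abs{x}$, where it is controlled by a multiple of $\abs{\Delta}^{p}$. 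Granting the scalar inequality, the two parts of the lemma are routine bookkeeping together with the vanishing of $\gradrisk{\opt}$.
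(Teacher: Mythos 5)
Your proposal is correct and follows essentially the same route as the paper: both rest on the two-sided scalar comparison of $\ell_{p}$ with its second-order Taylor expansion from Lemma 2.5 of \cite{adilFastAlgorithmsEll2022}, followed by the substitution $x = \opterrori$, $\Delta = \inp{w-\opt}{X_i}$, averaging for the empirical bound, and taking expectations (using $\gradrisk{\opt} = 0$) for the population bound. The paper omits the details of the upper bound, which you spell out correctly, including the integrability check under the assumptions of Theorem \ref{thm:pgeq2}.
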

Up to constant factors that depend only on $p$ and an $L^{p}$ norm residual term, Lemma \ref{lem:boundsgeq2} gives matching upper and lower bounds on the excess risk and excess empirical risk in terms of their second order Taylor expansions around the minimizer. We can thus use the approach taken in the proof of Theorem \ref{thm:peq2} to obtain the result. The only additional challenge is the control of the term $\norm{\erm - \opt}_{L^{p}}$, which we achieve by reducing it to an ${\norm{\erm - \opt}}_{H_{p}}$ term using norm equivalence. A detailed proof of Theorem \ref{thm:pgeq2}, including the proof of Lemma \ref{lem:boundsgeq2}, can be found in Appendix \ref{sec:pgeq2}.

\subsection{Proof Sketch of Theorem \ref{thm:pleq2}}
The most technically challenging case is when $p \in (1, 2)$. Indeed as seen in the proof of Theorem \ref{thm:peq2}, the most involved step is lower bounding the excess empirical risk with high probability. For the case $p \in [2, \infty)$, we achieved this by having access to a pointwise quadratic lower bound, which is not too surprising. Indeed, at small scales, we expect the second order Taylor expansion to be accurate, while at large scales, we expect the $p$-th power loss to grow at least quadratically for $p \in [2, \infty)$.

In the case of $p \in (1, 2)$, we are faced with a harder problem. Indeed, as $p \to 1$, the $\lp$ losses behave almost linearly at large scales. This means that we cannot expect to obtain a global quadratic lower bound as for the case $p \in [2, \infty)$, so we will need a different proof technique. 
Motivated by related concerns, \citet{bubeckHomotopyMethodLp2018} introduced the following approximation to the $p$-th power function
\begin{equation*}
    \gamma_{p}(t, x) \defeq 
    \begin{dcases*}
            \frac{p}{2}t^{p-2}x^{2} & \quad if \quad $x \leq t$ \\
            x^{p} - \paren*{1 - \frac{p}{2}}t^{p} & \quad if \quad $x > t$,
            \end{dcases*}
\end{equation*}
for $t, x \in [0, \infty)$ and with $\gamma_p(0, 0) = 0$. This function was further studied by \citet{adilIterativeRefinementPnorm2019}, who showed in particular that for any $t \in \R$, the function $x \mapsto \gamma_{p}(\abs{t}, \abs{x})$ is, up to constants that depend only on $p$, equal to the gap between the function $x \mapsto \ell_{p}(t + x)$ and its linearization around $0$; see Lemma 4.5 in \citep{adilIterativeRefinementPnorm2019} for the precise statement. We use this result to derive the following Lemma.
\begin{lemma}
\label{lem:boundsleq2}
    Let $p \in (1, 2)$. Under the assumptions of Theorem \ref{thm:pleq2}, we have
    \begin{align}
        \emprisk{w} - \emprisk{\opt} &\geq \frac{1}{4p^{2}} \frac{1}{n} \sum_{i=1}^{n} \gamma_{p}\paren*{\abs{\opterrori}, \abs{\inp{w - \opt}{X_i}}} + \inp{\grademprisk{\opt}}{w - \opt}, \label{eq:lowerboundpleq2} \\
        \risk{w} - \risk{\opt} &\leq \frac{4}{(p-1)}\hessnorm{w - \opt}^2 \label{eq:upperboundleq2}.
    \end{align}
\end{lemma}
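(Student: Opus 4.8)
The plan is to reduce both inequalities to one-dimensional pointwise estimates on the Bregman divergence of the scalar loss $t \mapsto \ell_p(t)$ and then lift them to $\R^d$ via the chain rule, using empirical averaging for \eqref{eq:lowerboundpleq2} and expectation for \eqref{eq:upperboundleq2}. Recall $\nabla \ell_p(\inp{w}{X}-Y) = \ell_p'(\inp{w}{X}-Y)\,X$, so $\inp{\nabla\ell_p(\opterrori)}{w-\opt} = \ell_p'(\opterrori)(\errori-\opterrori)$ with $\errori-\opterrori = \inp{w-\opt}{X_i}$; and $\nabla^2\ell_p(\inp{w}{X}-Y) = \abs{\inp{w}{X}-Y}^{p-2}XX^T$, so $\norm{v}_{H_p}^2 = \Exp\brack*{\abs{\opterror}^{p-2}\inp{v}{X}^2}$. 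The scalar facts I would establish are: for all $s,t\in\R$,
\[
    \frac{1}{4p^2}\,\gamma_p\paren*{\abs{s},\abs{t-s}} \;\le\; \ell_p(t)-\ell_p(s)-\ell_p'(s)(t-s) \;\le\; \frac{4}{p-1}\,\abs{s}^{p-2}(t-s)^2 ;
\]
substituting $s=\opterrori,\,t=\errori$ and averaging gives \eqref{eq:lowerboundpleq2}, while substituting $s=\opterror,\,t=\error$ and integrating gives \eqref{eq:upperboundleq2}.

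For the left inequality, I would invoke the pointwise comparison of $\abs{t}^p-\abs{s}^p-p\sgn(s)\abs{s}^{p-1}(t-s)$ with $\gamma_p$ proved by \citet{bubeckHomotopyMethodLp2018} and \citet{adilIterativeRefinementPnorm2019}, divide by the normalization $p(p-1)$, and bound the resulting $p$-dependent constant below by $\tfrac{1}{4p^2}$ using $p\in(1,2)$. The passage through $\gamma_p$ is essential: for $p<2$ the loss is sub-quadratic at large scales, so no global bound of the form $c\,\abs{s}^{p-2}(t-s)^2$ from below can hold, unlike the $p\ge 2$ case of Lemma \ref{lem:boundsgeq2}. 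Applying the inequality at $s=\opterrori$, $t=\errori$, summing over $i\in[n]$, and dividing by $n$ yields \eqref{eq:lowerboundpleq2} since $\grademprisk{\opt}=\tfrac1n\sum_i\ell_p'(\opterrori)X_i$.

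For the upper bound I would first record that, under the hypotheses of Theorem \ref{thm:pleq2}, $R_p$ is differentiable at $\opt$ with $\gradrisk{\opt}=\Exp\brack*{\ell_p'(\opterror)X}=0$ (differentiability by dominated convergence from the $p$-th moment bounds on $Y$ and the $X^j$; the gradient vanishes by optimality of $\opt$), and that $H_p=\Exp\brack*{\abs{\opterror}^{p-2}XX^T}$ is finite, since by Cauchy--Schwarz $\Exp\brack*{\abs{\opterror}^{p-2}(X^j)^2}\le\Exp\brack*{\abs{\opterror}^{2(p-2)}(X^j)^4}^{1/2}$. The two extra hypotheses of Theorem \ref{thm:pleq2}, $\abs{\opterror}>0$ almost surely and $\Exp\brack*{\abs{\opterror}^{2(p-2)}}<\infty$, are exactly what licenses exchanging differentiation and expectation (Theorem 2.27 in \citet{follandRealAnalysisModern2013a}) in the identification $\nabla^2 R_p(\opt)=H_p$, because the integrand $\ell_p''(\opterror)=\abs{\opterror}^{p-2}$ must be finite a.s.\ and locally dominated, which fails for $p<2$ without an a.s.\ lower bound on $\abs{\opterror}$. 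Granting $\gradrisk{\opt}=0$, adding and subtracting the (zero-expectation) linear term gives $R_p(w)-R_p(\opt)=\Exp\brack*{\ell_p(\error)-\ell_p(\opterror)-\ell_p'(\opterror)\inp{w-\opt}{X}}$, and applying the right inequality of the scalar display --- which does hold pointwise for $p\in(1,2)$, precisely because $\ell_p''$ blows up at $0$ so the right side is $+\infty$ exactly where needed, and which one checks by scale-invariance (reducing to a one-variable supremum) or extracts from \citet{adilIterativeRefinementPnorm2019} together with $\gamma_p(t,x)\le\tfrac p2 t^{p-2}x^2$ --- and taking expectations produces \eqref{eq:upperboundleq2} via $\Exp\brack*{\abs{\opterror}^{p-2}\inp{w-\opt}{X}^2}=\norm{w-\opt}_{H_p}^2$.

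The main obstacle is the pair of scalar $\gamma_p$-comparison inequalities and extracting usable constants from the cited approximation results; this is where the substantive work of the $p\in(1,2)$ case lies, since $\gamma_p$ --- not the Hessian quadratic --- is the correct local model of the excess loss. A secondary but genuinely necessary point is the measure-theoretic justification for differentiating under the expectation in the upper-bound step, which is why Theorem \ref{thm:pleq2} must impose strict (almost-sure) non-realizability together with a negative moment of $\abs{\opterror}$, as flagged in the discussion following the theorem statement.
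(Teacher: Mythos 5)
Your proposal is correct and follows essentially the same route as the paper: both inequalities are derived from the pointwise $\gamma_p$-comparison of the excess scalar loss in Lemma 4.5 of \citet{adilIterativeRefinementPnorm2019}, lifted via the chain rule and empirical averaging for the lower bound, and combined with the pointwise estimate $\gamma_p(\abs{s},\abs{t-s})\le \abs{s}^{p-2}(t-s)^2$ (valid on $\{\abs{s}>0\}$, hence almost surely by the non-realizability assumption) and $\nabla R_p(\opt)=0$ before taking expectations for the upper bound. The only cosmetic differences are that you carry a slightly sharper $\tfrac{p}{2}t^{p-2}x^2$ bound on $\gamma_p$ and foreground the dominated-convergence justification for $\nabla^2 R_p(\opt)=H_p$, which the paper relegates to separate appendix lemmas.
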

As expected, while we do have the desired quadratic upper bound, the lower bound is much more cumbersome, and is only comparable to the second order Taylor expansion when $\abs{\inp{w - \opt}{X_i}} \leq \abs{\opterrori}$. What we need for the proof to go through is a high probability lower bound of order $\Omega(\hessnorm{w - w^{*}}^{2})$ on the first term in the lower bound (\ref{eq:lowerboundpleq2}). We obtain this in the following Proposition.
\begin{proposition}
\label{prop:lower_bound_pleq2}
    Let $\delta \in (0, 1]$. Under the assumptions of Theorem \ref{thm:pleq2}, if $n \geq 196 \sigma_{p}^2(d + 2 \log(4/\delta))$, then with probability at least $1 - \delta/2$, for all $w \in \R^{d}$,
    \begin{equation*}
        \frac{1}{n} \sum_{i=1}^{n} \gamma_{p}\paren*{\abs{\opterrori}, \abs{\inp{w - \opt}{X_i}}} \geq \frac{p}{8}\min\brace*{\hessnorm{w - \opt}^{2}, \eps^{2-p}\hessnorm{w - \opt}^{p}},
    \end{equation*}
    where $\eps^{p-2} \defeq 8 \sigma_{p}^{3-p} c_{p}^{(2-p)/2} \sqrt{c^{*}_{p}}$, and $c_{p}$ and $c^{*}_{p}$ are as defined in Theorem \ref{thm:pleq2}.
\end{proposition}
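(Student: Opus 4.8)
The plan is: (I) replace $\gamma_p$ by a scaling-friendly surrogate; (II) reduce the uniform estimate to a single scale; (III) lower-bound the resulting quadratic term via Proposition~\ref{prop:lower_tail}; and (IV) absorb the leftover error term by truncation. \emph{Steps (I)--(II):} Write $v := w - \opt$ and $r_i := \opterrori$. A case check against the definition of $\gamma_p$ gives, for all $t,x \geq 0$,
\[
  \gamma_p(t,x) \;\geq\; \tfrac{p}{2}\,\psi(t,x), \qquad \psi(t,x) := \min\{t^{p-2}x^2,\,x^p\} = x^2\,(\max\{t,x\})^{p-2},
\]
because $\gamma_p(t,x) = \tfrac p2 t^{p-2}x^2 = \tfrac p2\psi(t,x)$ when $x \leq t$, while $\gamma_p(t,x) = x^p - (1-\tfrac p2)t^p \geq \tfrac p2 x^p = \tfrac p2\psi(t,x)$ when $x > t$. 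Note $\psi(\abs{r_i},\abs{\inp{v}{X_i}}) = \abs{r_i}^{p-2}\inp{v}{X_i}^2$ on $\{\abs{\inp{v}{X_i}} \leq \abs{r_i}\}$, with $\Exp\brack{\abs{r}^{p-2}\inp{v}{X}^2} = \norm{v}_{H_{p}}^2$ since $H_{p} = \Exp\brack{\abs{r}^{p-2}XX^{T}}$. As $p > 1$, it suffices to prove that, on a $(1-\delta/2)$-probability event, $\frac1n\sum_i\psi(\abs{r_i},\abs{\inp{v}{X_i}}) \geq \frac14\min\{\norm{v}_{H_{p}}^2,\,\eps^{2-p}\norm{v}_{H_{p}}^p\}$ for every $v$. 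One checks moreover (separating $\lambda \leq 1$ from $\lambda \geq 1$) that $\psi(t,\lambda x) \geq \min\{\lambda^2,\lambda^p\}\,\psi(t,x)$ for every $\lambda \geq 0$; writing a general $v \neq 0$ as $(\norm{v}_{H_{p}}/\eps)\,v'$ with $\norm{v'}_{H_{p}} = \eps$ then reduces the task to establishing $\frac1n\sum_i\psi(\abs{r_i},\abs{\inp{v}{X_i}}) \geq \frac14\eps^2$ uniformly over the sphere $\{\norm{v}_{H_{p}} = \eps\}$.

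\emph{Step (III).} Fix $v$ with $\norm{v}_{H_{p}} = \eps$ and set $\xi_i := \abs{r_i}^{p-2}\inp{v}{X_i}^2$. Using $\min\{a,b\} = a - (a-b)_+$ and that $(\xi_i - \abs{\inp{v}{X_i}}^p)_+$ is nonzero only on $B_i := \{\abs{\inp{v}{X_i}} > \abs{r_i}\}$, where it is at most $\xi_i$,
\[
  \frac1n\sum_i\psi(\abs{r_i},\abs{\inp{v}{X_i}}) \;\geq\; \frac1n\sum_i\xi_i \;-\; \frac1n\sum_i\xi_i\,\ind{\abs{\inp{v}{X_i}} > \abs{r_i}}.
\]
For the first average, apply Proposition~\ref{prop:lower_tail} to the reweighted vectors $Z_i := \abs{r_i}^{(p-2)/2}X_i$: the moment hypotheses of Theorem~\ref{thm:pleq2} make the fourth moments of the coordinates of $Z_i$ finite, the $L^2$-norm of $Z$ in the notation of that proposition is exactly $\norm{\cdot}_{L^{2},p} = \norm{\cdot}_{H_{p}}$, and the associated variance proxy equals $C_{(L^{4},p)\to(L^{2},p)}^{4} = \sigma_p^2$; since $n \geq 196\sigma_p^2(d + 2\log(4/\delta))$ forces $7\sigma_p\sqrt{(d+2\log(4/\delta))/n} \leq \tfrac12$, with probability at least $1-\delta/2$ one gets $\frac1n\sum_i\xi_i = \frac1n\sum_i\inp{v}{Z_i}^2 \geq \tfrac12\norm{v}_{H_{p}}^2 = \tfrac12\eps^2$ simultaneously for all such $v$. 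Hence it remains to prove, on an event of probability at least $1-\delta/2$, the companion bound $\frac1n\sum_i\xi_i\ind{\abs{\inp{v}{X_i}} > \abs{r_i}} \leq \tfrac14\eps^2$ uniformly over the sphere; adding the two gives $\frac1n\sum_i\psi \geq \tfrac14\eps^2$.

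\emph{Step (IV), the obstacle.} This last bound is genuinely delicate: $\xi_i = \abs{r_i}^{p-2}\inp{v}{X_i}^2$ is unbounded — it blows up as $\abs{r_i}\to 0$ — so under our weak moments there is no unconditional uniform upper bound on the form $v\mapsto\frac1n\sum_i\xi_i$. The plan is to truncate: (a) peel off the rare event $\{\abs{r_i} < \lambda\}$, controlled by the negative-moment Markov bound $\Prob\paren{\abs{r} < \lambda} \leq c_{p}^{*}\,\lambda^{2(2-p)}$; (b) on $\{\abs{r_i}\geq\lambda\}\cap B_i$ bound $\xi_i \leq \lambda^{p-2}\inp{v}{X_i}^2$ and clip it at a level $M$, so the clipped summand lies in $[0,M]$ and its empirical average stays within $O(M\sqrt{d/n})$ of its mean uniformly over the linear-threshold class $\{(x,r)\mapsto\ind{\abs{\inp{v}{x}} > \abs{r}}\}$, which has VC dimension $O(d)$ — a $\delta$-free deviation by the sample-size condition; (c) estimate the relevant expectations via Cauchy--Schwarz and \Holder against the weighted fourth moment, using $\norm{v}_{L^{4},p}^{2} \leq \sigma_p\norm{v}_{H_{p}}^{2}$, $\norm{v}_{L^{2}} \leq \sqrt{c_{p}}\,\norm{v}_{H_{p}}$, and $\Exp\brack{\abs{r}^{2(p-2)}\inp{v}{X}^2} \leq \norm{v}_{L^{4},p}^{2}\sqrt{c_{p}^{*}}$; and (d) control the clipping error by $M^{-1}$ times a second-moment term. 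Optimizing over $\lambda$ and $M$ at scale $\norm{v}_{H_{p}} = \eps$ yields a bound that is at most $\tfrac14\eps^2$ exactly when $\eps^{p-2} = 8\,\sigma_p^{3-p}(d\,c_{p})^{(2-p)/2}\sqrt{c_{p}^{*}}$ — which is the definition of $\eps$ in the statement. Combined with Step~(III) and the factor $\tfrac p2 > \tfrac12$ of Step~(I), this gives the claim with constant $\tfrac18$.

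I expect Step~(IV) to be the main obstacle: the rest is elementary inequalities plus a single use of Proposition~\ref{prop:lower_tail}, whereas uniformly taming the unbounded, $v$-dependent error process $v\mapsto\frac1n\sum_i\xi_i\ind{\abs{\inp{v}{X_i}} > \abs{r_i}}$ — with only the negative moment $c_{p}^{*}$, the weighted fourth moment, and a VC bound available, and no $\delta$ allowed in the final constant — is what forces the truncation-and-balance argument, whose balance point pins down the scale $\eps$.
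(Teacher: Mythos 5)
Your Steps (I)--(III) are sound and track the paper's own reduction: the surrogate $\psi(t,x)=x^2\max\{t,x\}^{p-2}$ with $\gamma_p\geq\tfrac p2\psi$, the homogeneity $\psi(t,\lambda x)\geq\min\{\lambda^2,\lambda^p\}\psi(t,x)$ reducing to the sphere $\{\hessnorm{v}=\eps\}$, and the application of Proposition~\ref{prop:lower_tail} to $Z=\abs{\opterror}^{(p-2)/2}X$ are all exactly the right moves (the paper uses the scaling inequality for $\gamma_p$ itself, from Lemma~3.3 of Adil et al., rather than for $\psi$, but this is cosmetic). The problem is Step~(IV), which you correctly identify as the crux and then do not carry out. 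The decomposition $\psi=\xi-(\xi-\abs{\inp{v}{X}}^p)_+$ forces you to prove a uniform \emph{upper} bound on the heavy-tailed empirical process $v\mapsto\frac1n\sum_i\xi_i\ind{\abs{\inp{v}{X_i}}>\abs{\opterrori}}$, and this is where the plan breaks: under only the stated (weighted) fourth-moment and negative-moment assumptions, the upper tail of $\frac1n\sum_i\inp{v}{Z_i}^2$ does \emph{not} concentrate uniformly over the sphere --- the asymmetry between lower and upper tails is precisely why Proposition~\ref{prop:lower_tail} is a lower-tail-only statement. Concretely, the contribution of $\{\abs{\opterrori}<\lambda\}$ (where $\xi_i$ is largest) and of $\{\xi_i>M\}$ cannot be controlled with high probability from their expectations alone without paying a $1/\delta$ factor, which would push $\delta$ into $\eps$ and hence into the constant of the proposition; and the clipped class $\{(x,r)\mapsto\min\{\lambda^{p-2}\inp{v}{x}^2,M\}\ind{\abs{\inp{v}{x}}>\abs{r}}\}_v$ is not a VC class of indicators, so the advertised $O(M\sqrt{d/n})$ deviation does not follow from a VC argument as stated.

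The paper sidesteps the need for any upper bound by truncating \emph{inside} $\gamma_p$ before ever linearizing it. Since $\gamma_p(t,x)$ is decreasing in $t$ and increasing in $x$, replacing $\abs{\opterrori}$ by $\max\{\abs{\opterrori},\beta\}$ and $X_i$ by $\tilde X_i\defeq X_i\ind{\invhessnorm{X_i}\leq T}$ only decreases each summand; with $\beta=T\eps$ one then has $\abs{\inp{w}{\tilde X_i}}\leq\eps T=\beta\leq\max\{\abs{\opterrori},\beta\}$ on the sphere $\hessnorm{w}=\eps$, so every term sits in the quadratic branch and the sum becomes exactly $\frac{p}{2}\frac1n\sum_i\inp{w}{Z_i}^2$ for the bounded-below-weight vector $Z=\max\{\abs{\opterror},\beta\}^{(p-2)/2}\tilde X$. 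A single application of Proposition~\ref{prop:lower_tail} then gives the uniform lower bound by $\tfrac12\Exp\brack{\max\{\abs{\opterror},\beta\}^{p-2}\inp{w}{\tilde X}^2}$, and the loss from the two truncations is an \emph{expectation-level} error, bounded by $\sigma_p\paren{\Prob\paren{\abs{\opterror}<\beta}+\Prob\paren{\invhessnorm{X}>T}}^{1/2}$ via Cauchy--Schwarz and then by Markov using $c_p^*$ and $\Exp\brack{\invhessnorm{X}^2}\leq dc_p$; optimizing $T$ and choosing $\eps$ makes this error at most $\tfrac12$. No uniform upper bound on any empirical process is needed. As written, your proof has a genuine gap at the step you yourself flag as the obstacle, and the route you sketch for closing it is unlikely to work under the paper's assumptions.
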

\begin{proof}
    Let $\eps > 0$ and let $T \in (0, \infty)$ be a truncation parameter we will set later. Define
    \begin{equation*}
        \tilde{X} \defeq X \cdot \mathbbm{1}_{[0, T]}(\norm{X}_{H_{p}^{-1}}),
    \end{equation*}
    and the constant $\beta \defeq T \eps$. By Lemma 3.3 in \citep{adilIterativeRefinementPnorm2019}, we have that $\gamma_p(t, \lambda x) \geq \min\brace{\lambda^2, \lambda^{p}} \gamma_{p}(t, x)$ for all $\lambda \geq 0$. Furthermore, it is straightforward to verify that $\gamma_{p}(t, x)$ is decreasing in $t$ and increasing in $x$. Therefore, we have, for all $w \in \R^{d}$,
    \begin{align}
        &\frac{1}{n} \sum_{i=1}^{n} \gamma_{p}\paren*{\abs{\opterrori}, \abs{\inp{w - \opt}{X_i}}} \nonumber \\
        &\geq \min\brace*{\eps^{-2}\hessnorm{w - w_{p}}^{2}, \eps^{-p}\hessnorm{w - w_{p}}^{p}} \frac{1}{n} \sum_{i=1}^{n}
        \gamma_{p}\paren*{\abs*{\opterrori}, \abs*{\inp*{\frac{\eps(w - \opt)}{\hessnorm{w - \opt}}}{X_i}}} \nonumber \\
        &\geq \min\brace*{\eps^{-2}\hessnorm{w - w_{p}}^{2}, \eps^{-p}\hessnorm{w - w_{p}}^{p}} \cdot \inf_{\hessnorm{w} = \eps} \frac{1}{n} \sum_{i=1}^{n}
        \gamma_{p}\paren*{\abs{\opterrori}, \abs{\inp{w}{X_i}}}. \label{eq:ineq21}
    \end{align}
    The key idea to control the infimum in (\ref{eq:ineq21}) is to truncate $\inp{w}{X_i}$ from above by using the truncated vector $\tilde{X}$, and $\abs{\opterrori}$ from below by forcing it to be greater than $\beta$. By the monotonicity properties of $\gamma_{p}$ discussed above, we get that
    \begin{align}
        &\inf_{\hessnorm{w} = \eps} \frac{1}{n} \sum_{i=1}^{n} \gamma_{p}\paren*{\abs{\opterrori}, \abs{\inp{w}{X_i}}} \nonumber \\
        &\geq \inf_{\hessnorm{w} = \eps} \frac{1}{n} \sum_{i=1}^{n} \gamma_{p}(\max\brace*{\abs{\opterrori}, \beta}, \abs{\inp{w}{\tilde{X}_i}}) \nonumber \\ 
        &=\frac{\eps^{2}p}{2}\inf_{\hessnorm{w} = 1} \frac{1}{n} \sum_{i=1}^{n} \max\brace*{\abs{\opterrori}, \beta}^{p-2} \abs{\inp{w}{\tilde{X}_i}}^{2}, \label{eq:ineq22}
    \end{align}
    where the equality follows by the fact that with the chosen truncations, the second argument of $\gamma_{p}$ is less than or equal to the first. It remains to lower bound the infimum in (\ref{eq:ineq22}). Define
    \begin{equation*}
        Z = \max\brace*{\abs{\opterror}, \beta}^{(p-2)/2} \tilde{X}.
    \end{equation*}
    Removing the truncations and using our assumptions, we see that the components of $Z$ have finite fourth moment. By Proposition \ref{prop:lower_tail} and the condition on $n$, we get that with probability at least $1-\delta/2$,
    \begin{align}
        &\inf_{\hessnorm{w} = 1} \frac{1}{n} \sum_{i=1}^{n} \max\brace*{\abs{\opterrori}, \beta}^{p-2} \abs{\inp{w}{\tilde{X}_i}}^{2} \nonumber \\
        &=\inf_{\hessnorm{w} = 1} \frac{1}{n}\sum_{i=1}^{n} \inp{w}{Z_i}^{2} \geq \frac{1}{2} \inf_{\norm{w}_{H_{p}} = 1} \Exp\brack*{\inp{w}{Z}^2} \nonumber \\
        &= \frac{1}{2} \inf_{\hessnorm{w}=1}  \Exp\brack*{\max\brace*{\abs{\opterror}, \beta}^{(p-2)} \inp{w}{\tilde{X}}^{2}} \nonumber \\
        &\geq \frac{1}{2} \paren*{1 - \sup_{\hessnorm{w}=1}\Exp\brack*{\abs{\opterror}^{p-2}\inp{w}{X}^{2} \paren*{\mathbbm{1}_{[0, \beta)}(\abs{\opterror}) +  \mathbbm{1}_{(T, \infty)}( \norm{X}_{H_{p}^{-1}})}}} \label{eq:ineq23}
    \end{align}
    We now bound the supremum in (\ref{eq:ineq23}). We have
    \begin{align}
        &\sup_{\hessnorm{w}=1}\Exp\brack*{\abs{\opterror}^{p-2}\inp{w}{X}^{2} \paren*{\mathbbm{1}_{[0, \beta)}(\abs{\opterror}) +  \mathbbm{1}_{(T, \infty)}( \norm{X}_{H_{p}^{-1}})}} \nonumber \\
        &\leq \sup_{\hessnorm{w}=1} \brace*{\Exp\brack*{\abs{\opterror}^{2(p-2)}\inp{w}{X}^{4}}^{1/2}} \paren*{\Prob\paren*{\abs{\opterror} < \beta} + \Prob\paren*{\invhessnorm{X} > T}}^{1/2} \nonumber \\
        &= \paren*{\sup_{\hessnorm{w}=1} \norm{w}_{L^{4}, p}^{2}} \paren*{\Prob\paren*{\abs{\opterror} < \beta} + \Prob\paren*{\invhessnorm{X} > T}}^{1/2} \nonumber \\
        &= \sigma_{p} \paren*{\Prob\paren*{\abs{\opterror} < \beta} + \Prob\paren*{\invhessnorm{X} > T}}^{1/2}, \label{eq:ineq24}
    \end{align}
    where the first inequality follows from Cauchy-Schwartz inequality, and the subsequent equalities by definitions of $\norm{\cdot}_{L^{4},p}$ and $\sigma^{2}_{p}$. It remains to bound the tail probabilities. Recall that $\beta = T \eps$, so
    \begin{align}
        \Prob\paren*{\abs{\opterror} < \beta} &= \Prob\paren*{\abs{\opterror} < T\eps} \nonumber\\
        &= \Prob\paren*{\abs{\opterror}^{-1} > (T\eps)^{-1}} \nonumber\\
        &= \Prob\paren*{\abs{\opterror}^{2(p-2)} > (T\eps)^{2(p-2)}} \nonumber\\
        &\leq \Exp\brack{\abs{\opterror}^{2(p-2)}} (T\eps)^{2(2-p)} \nonumber\\
        &= c^{*}_{p} (T\eps)^{2(2-p)} \nonumber,
    \end{align}
    where we applied Markov's inequality in the fourth line, and the last follows by definition of $c^{*}_{p}$ in Theorem \ref{thm:pleq2}. Moreover by the finiteness of the second moment of the coordinates $X^{j}$ of $X$, we have
    \begin{equation*}
        \Exp\brack*{\invhessnorm{X}^{2}} = \Exp\brack*{X^{T}H_p^{-1}X} = \Exp\brack*{\Tr\paren*{H^{-1}_{p}XX^{T}}} = \Tr\paren*{H_{p}^{-1} \Sigma} = c_p
    \end{equation*}
    where $\Sigma = \Exp\brack{XX^{T}}$, and the last equality by definition of $c_{p}$ in Theorem \ref{thm:pleq2}. By Markov's inequality
    \begin{equation*}
        \Prob\paren*{\abs{\opterror} < T\eps} + \Prob\paren*{\invhessnorm{X} > T} \leq c^{*}_{p} T^{2(2-p)} \eps^{2(2-p)} + \frac{c_{p}}{T^{2}}.
    \end{equation*}
    Choosing
    \begin{align*}
        &T \defeq \paren*{\frac{c_{p}}{c^{*}_{p}(2-p)}}^{1/(6-2p)}, &\eps^{2 - p} \defeq \frac{1}{8\sigma_{p}^{3-p}\sqrt{c^{*}_{p}} \cdot c_{p}^{(2-p)/2}},
    \end{align*}
    ensures that
    \begin{equation}
    \label{eq:ineq26}
        \sigma_{p} \paren*{\Prob\paren*{\abs{\opterror} < T^{*}\eps} + \Prob\paren*{\invhessnorm{X} > T^{*}}}^{1/2} \leq 1/2.
    \end{equation}
    Combining the inequalities (\ref{eq:ineq26}), (\ref{eq:ineq24}), (\ref{eq:ineq23}), (\ref{eq:ineq22}), and (\ref{eq:ineq21}) yields the result.
\end{proof}
A detailed proof of Theorem \ref{thm:pleq2}, including the proof of Lemma \ref{lem:boundsleq2}, can be found in Appendix \ref{sec:pleq2}.

\begin{ack}
We thank Nikita Zhivotovskiy, Sushant Sachdeva, and Deeksha Adil for feedback on the manuscript. MAE was partially supported by NSERC Grant [2019-06167], CIFAR AI Chairs program, and CIFAR AI Catalyst grant.
\end{ack}

\nocite{adilFasterPnormMinimizing2019}
\nocite{mendelsonLearningBoundedSubsets2021}
\nocite{vaskeviciusSuboptimalityConstrainedLeast2023}
\nocite{mourtadaDistributionFree2022}
\nocite{catoniDimensionfreePACBayesianBounds2017}
\printbibliography

@online{catoniDimensionfreePACBayesianBounds2017,
  title = {Dimension-Free {{PAC-Bayesian}} Bounds for Matrices, Vectors, and Linear Least Squares Regression},
  author = {Catoni, Olivier and Giulini, Ilaria},
  date = {2017-12-31},
  doi = {10.48550/arXiv.1712.02747}
}

@inproceedings{adilFasterPnormMinimizing2019,
  title = {Faster $p$-Norm Minimizing Flows, via Smoothed q-Norm Problems},
  booktitle = {Proceedings of the 2020 {{ACM-SIAM Symposium}} on {{Discrete Algorithms}} ({{SODA}})},
  author = {Adil, Deeksha and Sachdeva, Sushant},
  date = {2019-12-23},
  doi = {10.1137/1.9781611975994.54}
}

@article{mendelsonLearningBoundedSubsets2021,
  title = {Learning {{Bounded Subsets}} of {{{$L_{p}$}}}},
  author = {Mendelson, Shahar},
  date = {2021-08},
  journaltitle = {IEEE Transactions on Information Theory},
  doi = {10.1109/TIT.2021.3083553},
}

@article{vaskeviciusSuboptimalityConstrainedLeast2023,
  title = {Suboptimality of Constrained Least Squares and Improvements via Non-Linear Predictors},
  author = {Vaškevičius, Tomas and Zhivotovskiy, Nikita},
  date = {2023-02},
  journaltitle = {Bernoulli},
  doi = {10.3150/22-BEJ1465}
}

@article{mourtadaDistributionFree2022,
    author = {Jaouad Mourtada and Tomas Va{\v{s}}kevi{\v{c}}ius and Nikita Zhivotovskiy},
    title = {Distribution-free robust linear regression},
    journaltitle = {Mathematical Statistics and Learning},
    year = 2022,
    doi = {10.4171/MSL/27}
}

@online{adilFastAlgorithmsEll2022,
  title = {Fast {{Algorithms}} for $\ell_{p}$-{{Regression}}},
  author = {Adil, Deeksha and Kyng, Rasmus and Peng, Richard and Sachdeva, Sushant},
  date = {2022-11-07},
  doi = {10.48550/arXiv.2211.03963}
}

@inproceedings{adilFastProvablyConvergent2019,
  title = {Fast, {{Provably}} Convergent {{IRLS Algorithm}} for p-norm {{Linear Regression}}},
  booktitle = {Advances in {{Neural Information Processing Systems}}},
  author = {Adil, Deeksha and Peng, Richard and Sachdeva, Sushant},
  date = {2019},
  url = {https://proceedings.neurips.cc/paper_files/paper/2019/hash/46c7cb50b373877fb2f8d5c4517bb969-Abstract.html}
}

@inproceedings{adilIterativeRefinementPnorm2019,
  title = {Iterative {{Refinement}} for $\ell_{p}$-norm {{Regression}}},
  booktitle = {Proceedings of the 2019 {{Annual ACM-SIAM Symposium}} on {{Discrete Algorithms}} ({{SODA}})},
  author = {Adil, Deeksha and Kyng, Rasmus and Peng, Richard and Sachdeva, Sushant},
  date = {2019-01},
  doi = {10.1137/1.9781611975482.86}
}

@article{arconesBahadurKieferRepresentationLp1996,
  title = {The {{Bahadur-Kiefer Representation}} of {{$L_{p}$ Regression Estimators}}},
  author = {Arcones, Miguel A.},
  date = {1996},
  journaltitle = {Econometric Theory},
  url = {https://www.jstor.org/stable/3532831}
}

@article{audibertRobustLinearLeast2011,
  title = {Robust Linear Least Squares Regression},
  author = {Audibert, Jean-Yves and Catoni, Olivier},
  date = {2011-10},
  journaltitle = {The Annals of Statistics},
  doi = {10.1214/11-AOS918}
}

@article{baiMEstimationMultivariateLinear1992,
  title = {M-{{Estimation}} of {{Multivariate Linear Regression Parameters Under}} a {{Convex Discrepancy Function}}},
  author = {Bai, Z. D. and Rao, C. Radhakrishna and Wu, Y.},
  date = {1992},
  journaltitle = {Statistica Sinica},
  url = {https://www.jstor.org/stable/24304129}
}

@inproceedings{bubeckHomotopyMethodLp2018,
  title = {An Homotopy Method for $\ell_{p}$ Regression Provably beyond Self-Concordance and in Input-Sparsity Time},
  booktitle = {Proceedings of the 50th {{Annual ACM SIGACT Symposium}} on {{Theory}} of {{Computing}}},
  author = {Bubeck, Sébastien and Cohen, Michael B. and Lee, Yin Tat and Li, Yuanzhi},
  date = {2018-06-20},
  doi = {10.1145/3188745.3188776}
}

@article{heGeneralBahadurRepresentation1996,
  title = {A General {{Bahadur}} Representation of {{M-estimators}} and Its Application to Linear Regression with Nonstochastic Designs},
  author = {He, Xuming and Shao, Qi-Man},
  date = {1996-12},
  journaltitle = {The Annals of Statistics},
  doi = {10.1214/aos/1032181172}
}

@inproceedings{hsuRandomDesignAnalysis2012,
  title = {Random {{Design Analysis}} of {{Ridge Regression}}},
  booktitle = {Proceedings of the 25th {{Annual Conference}} on {{Learning Theory}}},
  author = {Hsu, Daniel and Kakade, Sham M. and Zhang, Tong},
  date = {2012-06-16},
  url = {https://proceedings.mlr.press/v23/hsu12.html},
}

@inproceedings{jambulapatiImprovedIterationComplexities2022,
  title = {Improved Iteration Complexities for Overconstrained $p$-Norm Regression},
  booktitle = {Proceedings of the 54th {{Annual ACM SIGACT Symposium}} on {{Theory}} of {{Computing}}},
  author = {Jambulapati, Arun and Liu, Yang P. and Sidford, Aaron},
  date = {2022-06-10},
  doi = {10.1145/3519935.3519971}
}

@article{knightLimitingDistributionsSb1998,
  title = {Limiting Distributions for $L_{1}$ Regression Estimators under General Conditions},
  author = {Knight, Keith},
  date = {1998-04},
  journaltitle = {The Annals of Statistics},
  doi = {10.1214/aos/1028144858}
}

@article{koltchinskiiBoundingSmallestSingular2015,
  title = {Bounding the {{Smallest Singular Value}} of a {{Random Matrix Without Concentration}}},
  author = {Koltchinskii, Vladimir and Mendelson, Shahar},
  date = {2015},
  journaltitle = {International Mathematics Research Notices},
  doi = {10.1093/imrn/rnv096},
}

@article{laiOverviewAsymptoticProperties2005,
  title = {An {{Overview}} of {{Asymptotic Properties}} of {{$L_{p}$ Regression}} under {{General Classes}} of {{Error Distributions}}},
  author = {Lai, P. Y. and Lee, Stephen M. S.},
  date = {2005},
  journaltitle = {Journal of the American Statistical Association},
  url = {https://www.jstor.org/stable/27590567}
}

@article{lecuePerformanceEmpiricalRisk2016,
  title = {Performance of Empirical Risk Minimization in Linear Aggregation},
  author = {Lecué, Guillaume and Mendelson, Shahar},
  date = {2016-08},
  journaltitle = {Bernoulli},
  doi = {10.3150/15-BEJ701}
}

@article{lecueRegularizationSmallballMethod2017a,
  title = {Regularization and the Small-Ball Method {{II}}: Complexity Dependent Error Rates},
  author = {Lecué, Guillaume and Mendelson, Shahar},
  date = {2017},
  journaltitle = {Journal of Machine Learning Research},
  url = {http://jmlr.org/papers/v18/16-422.html}
}

@article{lecueRegularizationSmallballMethod2018,
  title = {Regularization and the Small-Ball Method {{I}}: {{Sparse}} Recovery},
  author = {Lecué, Guillaume and Mendelson, Shahar},
  date = {2018-04},
  journaltitle = {The Annals of Statistics},
  doi = {10.1214/17-AOS1562}
}

@article{lecueSparseRecoveryWeak2017,
  title = {Sparse Recovery under Weak Moment Assumptions},
  author = {Lecué, Guillaume and Mendelson, Shahar},
  date = {2017-02-15},
  journaltitle = {Journal of the European Mathematical Society},
  doi = {10.4171/jems/682},
}

@book{lehmannTheoryPointEstimation2006,
  title = {Theory of {{Point Estimation}}},
  author = {Lehmann, Erich L. and Casella, George},
  year = {1983},
}

@inproceedings{mendelsonLearningConcentration2014,
  title = {Learning without Concentration},
  booktitle = {Proceedings of {{The}} 27th {{Conference}} on {{Learning Theory}}},
  author = {Mendelson, Shahar},
  date = {2014-05-29},
  url = {https://proceedings.mlr.press/v35/mendelson14.html},
}

@article{mendelsonLearningConcentrationGeneral2018a,
  title = {Learning without Concentration for General Loss Functions},
  author = {Mendelson, Shahar},
  date = {2018-06-01},
  journaltitle = {Probability Theory and Related Fields},
  doi = {10.1007/s00440-017-0784-y},
}

@article{mourtadaExactMinimaxRisk2022,
  title = {Exact Minimax Risk for Linear Least Squares, and the Lower Tail of Sample Covariance Matrices},
  author = {Mourtada, Jaouad},
  date = {2022-08},
  journaltitle = {The Annals of Statistics},
  doi = {10.1214/22-AOS2181}
}

@article{mourtadaImproperEstimatorOptimal2022,
  title = {An Improper Estimator with Optimal Excess Risk in Misspecified Density Estimation and Logistic Regression},
  author = {Mourtada, Jaouad and Gaïffas, Stéphane},
  date = {2022},
  journaltitle = {Journal of Machine Learning Research},
  url = {http://jmlr.org/papers/v23/20-782.html}
}

@article{niemiroAsymptoticsEstimatorsDefined1992,
  title = {Asymptotics for $M$-{{Estimators Defined}} by {{Convex Minimization}}},
  author = {Niemiro, Wojciech},
  date = {1992-09},
  journaltitle = {The Annals of Statistics},
  doi = {10.1214/aos/1176348782}
}

@article{oliveiraLowerTailRandom2016,
  title = {The Lower Tail of Random Quadratic Forms with Applications to Ordinary Least Squares},
  author = {Oliveira, Roberto Imbuzeiro},
  date = {2016-12-01},
  journaltitle = {Probability Theory and Related Fields},
  doi = {10.1007/s00440-016-0738-9},
}

@article{ostrovskiiFinitesampleAnalysisEstimators2021,
  title = {Finite-Sample Analysis of $M$-Estimators Using Self-Concordance},
  author = {Ostrovskii, Dmitrii M. and Bach, Francis},
  date = {2021-01},
  journaltitle = {Electronic Journal of Statistics},
  doi = {10.1214/20-EJS1780}
}

@article{ronnerAsymptoticNormalityPnorm1984,
  title = {Asymptotic Normality of $p$-norm Estimators in Multiple Regression},
  author = {Ronner, Arjen E.},
  date = {1984-09-01},
  journaltitle = {Zeitschrift für Wahrscheinlichkeitstheorie und Verwandte Gebiete},
  doi = {10.1007/BF00531893},
}

@article{rudelsonSmallBallProbabilities2015,
  title = {Small {{Ball Probabilities}} for {{Linear Images}} of {{High-Dimensional Distributions}}},
  author = {Rudelson, Mark and Vershynin, Roman},
  date = {2015},
  journaltitle = {International Mathematics Research Notices},
  doi = {10.1093/imrn/rnu243},
}

@article{saumardOptimalityEmpiricalRisk2018,
  title = {On Optimality of Empirical Risk Minimization in Linear Aggregation},
  author = {Saumard, Adrien},
  date = {2018-08},
  journaltitle = {Bernoulli},
  doi = {10.3150/17-BEJ925}
}

@book{vaartAsymptoticStatistics1998,
  title = {Asymptotic {{Statistics}}},
  author = {family=Vaart, given=A. W., prefix=Van Der, useprefix=false},
  date = {1998},
  doi = {10.1017/CBO9780511802256}
}

@book{vandervaartWeakConvergenceEmpirical1996,
  title = {Weak {{Convergence}} and {{Empirical Processes}}},
  author = {Van Der Vaart, Aad W. and Wellner, Jon A.},
  date = {1996},
  doi = {10.1007/978-1-4757-2545-2}
}

@article{whiteMaximumLikelihoodEstimation1982,
  title = {Maximum {{Likelihood Estimation}} of {{Misspecified Models}}},
  author = {White, Halbert},
  date = {1982},
  journaltitle = {Econometrica},
  doi = {10.2307/1912526}
}

\newpage
\appendix

\section{Differentiability of the risk}
\label{sec:diff_risk}
In this section, we rigorously establish the twice differentiability the risk under our assumptions. We start by showing that under a subset of our assumptions, the risk is differentiable everywhere on $\R^{d}$.
\begin{lemma}
\label{lem:diff}
    Let $p \in (1, \infty)$ and assume that $\Exp\brack{\abs{Y}^{p}} < \infty$ and $\Exp\brack{\abs{X_j}^{p}} < \infty$ for all $j \in [d]$. Then $R_{p}$ is differentiable on $\R^{d}$, and
    \begin{equation*}
        \nabla R_{p}(w) = \Exp\brack{\nabla \lp(\error)}. %= \frac{1}{p-1}\Exp\brack{\sgn\paren{\error} \abs{\error}^{p-1}X}
    \end{equation*}
\end{lemma}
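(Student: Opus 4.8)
The plan is to invoke the standard theorem permitting differentiation under the expectation (e.g.\ Theorem 2.27 in \cite{follandRealAnalysisModern2013a}): it suffices to exhibit, on a neighbourhood of each point $w_{0} \in \Rd$, an integrable function that dominates all partial derivatives of the integrand $w \mapsto \lp(\error)$ uniformly over that neighbourhood. Recall that $\lp(t) = \abs{t}^{p}/[p(p-1)]$, so $\lp'(t) = \abs{t}^{p-1}\sgn(t)/(p-1)$, which is continuous on $\R$ because $p > 1$ (here the degeneracy of $\lp''$ at $0$ is irrelevant, which is why only $p$-th moments are needed); by the chain rule the $j$-th partial of $w \mapsto \lp(\error)$ is $\lp'(\error) X_{j}$, with absolute value $\tfrac{1}{p-1}\abs{\error}^{p-1}\abs{X_{j}}$. (The finiteness of $R_{p}(w)$ at every $w$, needed for the statement to make sense, follows from the same moment hypotheses by an analogous bound.)

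First I would fix $w_{0}$ and $R > 0$ and bound $\abs{\error}$ uniformly over $\brace{w : \norm{w - w_{0}} \le R}$ by $\abs{Y} + (\norm{w_{0}} + R)\norm{X}$, using the triangle and Cauchy--Schwarz inequalities. Since $(a+b)^{p-1} \le c_{p}(a^{p-1} + b^{p-1})$ for all $a, b \ge 0$, with $c_{p} = 1$ when $p \in (1,2]$ and $c_{p} = 2^{p-2}$ when $p > 2$, the $j$-th partial on this ball is dominated by
\[
    g_{j} \defeq \frac{c_{p}}{p-1}\paren*{\abs{Y}^{p-1} + (\norm{w_{0}} + R)^{p-1}\norm{X}^{p-1}}\abs{X_{j}} .
\]
The one substantive point is the integrability of $g_{j}$. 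For the first term, Young's inequality with exponents $p/(p-1)$ and $p$ gives $\abs{Y}^{p-1}\abs{X_{j}} \le \tfrac{p-1}{p}\abs{Y}^{p} + \tfrac{1}{p}\abs{X_{j}}^{p}$, which is integrable by hypothesis; for the second, $\norm{X}^{p-1}\abs{X_{j}} \le \norm{X}^{p} \le d^{p/2}\sum_{k=1}^{d}\abs{X_{k}}^{p}$, also integrable by hypothesis.

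With this domination in hand, Theorem 2.27 in \cite{follandRealAnalysisModern2013a} yields that $R_{p}$ has partial derivatives everywhere on $\Rd$, with $\partial_{j} R_{p}(w) = \Exp\brack{\lp'(\error) X_{j}}$. To upgrade to full differentiability I would then check that each $\partial_{j} R_{p}$ is continuous: if $w_{n} \to w$, then $\lp'(\inp{w_{n}}{X} - Y) X_{j} \to \lp'(\error) X_{j}$ pointwise by continuity of $\lp'$, and the sequence is eventually dominated by the integrable $g_{j}$ attached to a fixed ball around $w$, so dominated convergence gives $\partial_{j} R_{p}(w_{n}) \to \partial_{j} R_{p}(w)$. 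Hence $R_{p} \in C^{1}(\Rd)$, so it is differentiable with $\nabla R_{p}(w) = \Exp\brack{\nabla \lp(\error)}$, as claimed. The main obstacle is merely the bookkeeping of the two regimes $p \in (1,2]$ and $p > 2$ in the power inequality; beyond that the proof is a routine application of dominated convergence.
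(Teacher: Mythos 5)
Your proof is correct, but it follows a different route from the paper's. The paper verifies Fr\'echet differentiability directly from the definition: it fixes $w$, takes an arbitrary sequence $\Delta_k \to 0$, bounds the full difference quotient $\abs{\phi(w+\Delta_k)-\phi(w)-\inp{\nabla\phi(w)}{\Delta_k}}/\norm{\Delta_k}$ pointwise (using the fundamental theorem of calculus applied to $t \mapsto \nabla\phi(w+t\Delta_k)$, Cauchy--Schwarz, and the inequality $\abs{a+b}^{q}\le \max\{2^{q-1},1\}(\abs{a}^{q}+\abs{b}^{q})$) by an integrable dominating function of the same general shape as your $g_j$, and then passes the limit inside the expectation by dominated convergence. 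You instead establish existence of each partial derivative via differentiation under the integral sign on a ball around $w_0$, and then upgrade to differentiability by checking continuity of the partials (again by dominated convergence, using the continuity of $\ell_p'$, which holds at $0$ precisely because $p>1$), i.e.\ you prove $R_p \in C^1$. Both arguments rest on essentially the same domination estimate and the same moment hypotheses; the paper's version avoids the extra continuity step at the cost of dominating the vector-valued difference quotient rather than a single partial, while yours is closer to the textbook statement you cite and makes the two regimes $p\in(1,2]$ versus $p>2$ of the power inequality explicit. Your integrability check (Young's inequality for $\abs{Y}^{p-1}\abs{X_j}$ and the bound $\norm{X}^{p}\le d^{p/2}\sum_k\abs{X_k}^{p}$) is a clean substitute for the paper's use of H\"older's inequality and is fully rigorous. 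No gaps.
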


\begin{proof}
    Let $w \in \R^{d}$. We want to show that
    \begin{equation*}
        \lim_{\Delta \to 0} \frac{\abs*{R_{p}(w+\Delta) - R_{p}(w) - \Exp\brack*{\inp{\nabla \lp(\error)}{\Delta}}}}{\norm{\Delta}} = 0,
    \end{equation*}
    where, for convenience, we take the norm $\norm{\cdot}$ to be the Euclidean norm. Define the function $\phi(w, X, Y) \defeq \lp(\error)$ and note that by the chain rule $\phi$ is differentiable as a function of $w$ on all of $\R^{d}$. Now let $(\Delta_{k})_{k=1}^{n}$ be a sequence in $\R^{d}$ such that $\lim_{k \to \infty} \norm{\Delta_k} = 0$. Then
    \begin{align}
        &\lim_{k \to \infty} \frac{\abs*{R_{p}(w+\Delta_k) - R_{p}(w) - \Exp\brack*{\inp{\nabla\phi(w, X, Y)}{\Delta_k}}}}{\norm{\Delta_k}} \nonumber \\
        &= \lim_{k \to \infty} \frac{\abs*{\Exp\brack*{\phi(w+\Delta_k, X, Y) - \phi(w, X, Y) - \inp{\nabla \phi(w, X, Y)}{\Delta_k}}}}{\norm{\Delta_k}} \nonumber \\
        &\leq \lim_{k \to \infty} \Exp\brack*{\frac{\abs*{\phi(w + \Delta_k, X, Y) - \phi(w, X, Y) - \inp{\nabla \phi(w, X, Y)}{\Delta_{k}}}}{\norm{\Delta_{k}}}}. \label{eq:proof_1}
    \end{align}
    Our goal is to interchange the limit and expectation. For that, we will use the dominated convergence theorem. We construct our dominating function as follows. Let $R \defeq \sup_{k \in \N} \norm{\Delta_k}$, and note that $R < \infty$ since $\norm{\Delta_k} \to 0$ as $k \to \infty$. Then we have
    \begin{align*}
        &\frac{\abs*{\phi(w + \Delta_k, X, Y) - \phi(w, X, Y) - \inp{\nabla \phi(w, X, Y)}{\Delta_{k}}}}{\norm{\Delta_{k}}} \\
        &\leq \frac{\abs*{\phi(w + \Delta_k, X, Y) - \phi(w, X, Y)}}{\norm{\Delta_k}} + \frac{\abs*{\inp{\nabla \phi(w, X, Y)}{\Delta_{k}}}}{\norm{\Delta_{k}}} \\
        &\leq \frac{\inp*{\int_{0}^{1} \nabla \phi(w + t\Delta_k, X, Y) dt}{\Delta_k}}{\norm{\Delta_k}} + \norm{\nabla \phi(w, X, Y)} \\
        &\leq \norm*{\int_{0}^{1} \nabla \phi(w + t\Delta_k, X, Y) dt} + \norm{\nabla \phi(w, X, Y)} \\
        &\leq \int_{0}^{1} \norm{\nabla \phi(w + t\Delta_k, X, Y)} dt + \norm{\nabla \phi(w, X, Y)} \\
        &\leq 2 \sup_{\Delta \in B(0, R)} \norm{\nabla \phi(w + \Delta, X, Y)} \\
        &\leq \frac{2}{p-1} \norm{X} \sup_{\Delta \in B(0, R)} \abs*{\inp{w+\Delta}{X} - Y}^{p-1} \\
        &\leq \frac{2}{p-1} \norm{X} \sup_{\Delta \in B(0, R)} \max\brace{2^{p-1}, 1} \paren*{\abs*{\inp{w}{X} - Y}^{p-1} + \abs*{\inp{\Delta}{X}}^{p-1}} \\
        &= \frac{2^{p}}{p-1} \brace*{\abs*{\inp{w}{X} - Y}^{p-1}\norm{X} + R^{p-1}\norm{X}^{p}} \eqdef g(X, Y),
    \end{align*}
    where the second line follows by triangle inequality, the third from the fundamental theorem of calculus applied component-wise, the fourth by Cauchy-Schwartz inequality, the fifth by Jensen's inequality and the convexity of the norm, and the eighth by the inequality $\abs{a+b}^{q} \leq \max\brace{2^{q-1}, 1}(\abs{a}^{q} + \abs{b}^{q})$ valid for $q > 0$. It remains to show that $g(X, Y)$ is integrable. We have
    \begin{align*}
        \Exp\brack*{g(X, Y)} &= \frac{2^{p}}{p-1} \Exp\brack*{\abs*{\inp{w}{X} - Y}^{p-1}\norm{X} + R^{p-1}\norm{X}^{p}} \\
        &= \frac{2^{p}}{p-1} \brace*{\sum_{j=1}^{d} \Exp\brack*{\abs*{\inp{w}{X} - Y}^{p-1} \abs{X^{j}}} + R^{p-1}\Exp\brack*{\paren*{\sum_{j=1}^{d}\abs{X^{j}}}^p}} \\
        &\leq \frac{2^{p}}{p-1} \brace*{\sum_{j=1}^{d} \Exp\brack*{\abs{\error}^{p}}^{\frac{p-1}{p}} \Exp\brack*{\abs{X_j}^{p}}^{1/p} + R^{p-1}d^{p} \sum_{j=1}^{d} \Exp\brack*{\abs{X^{j}}^{p}}} \\
        &< \infty,
    \end{align*}
    where in the second line we used that the Euclidean norm is bounded by the $1$-norm, in the third we used Holder's inequality, and the last line follows from our assumptions. Applying the dominated convergence theorem, we interchange the limit and the expectation in (\ref{eq:proof_1}). Recalling that $\phi$ is differentiable finishes the proof.
\end{proof}

We now turn to the twice differentiability of the risk. We start with the easy case $p \in [2, \infty)$. The proof is very similar to that of Lemma \ref{lem:diff} and we omit it here.
\begin{lemma}
\label{lem:twice_diff_pgeq2}
    Let $p \in [2, \infty)$ and assume that $\Exp\brack{\abs{Y}^{p}} < \infty$ and $\Exp\brack{\abs{X_j}^{p}} < \infty$ for all $j \in [d]$. Then $R_{p}$ is twice differentiable on $\R^{d}$, and
    \begin{equation*}
        \nabla^{2} R_{p}(w) = \Exp\brack{\nabla^{2} \lp(\error)}.
    \end{equation*}
\end{lemma}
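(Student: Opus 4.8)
The plan is to follow the template of the proof of Lemma~\ref{lem:diff} one order higher: instead of arguing entry by entry, I will show directly that the map $\nabla R_p$ is Fr\'echet differentiable at every $w \in \R^{d}$ with derivative $\Exp\brack{\nabla^2 \lp(\error)}$, which is precisely what the statement asserts. The starting point is that for $p \in [2,\infty)$ the scalar loss is $C^2$ on $\R$: from $\lp'(t) = \sgn(t)\abs{t}^{p-1}/(p-1)$ and $p-1 \geq 1$ one gets $\lp''(t) = \abs{t}^{p-2}$, which is continuous. Hence, writing $\phi(w)$ for $\lp(\error)$ (suppressing the dependence on $(X,Y)$) as in the proof of Lemma~\ref{lem:diff}, the chain rule shows $\phi$ is twice differentiable in $w$ on all of $\R^{d}$, with $\nabla^2\phi(w) = \abs{\error}^{p-2} X X^{T}$ and therefore $\opnorm{\nabla^2\phi(w)} = \abs{\error}^{p-2}\norm{X}^{2}$. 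Lemma~\ref{lem:diff} also supplies $\nabla R_p(w) = \Exp\brack{\nabla\phi(w)}$, and $\Exp\brack{\nabla^2\phi(w)}$ is a well-defined matrix once we verify (last step below) that $\abs{\error}^{p-2}\norm{X}^{2}$ is integrable.

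Fix $w$ and let $(\Delta_k)_{k \geq 1} \subset \R^{d}$ with $\norm{\Delta_k}\to 0$, where $\norm{\cdot}$ is Euclidean. By the triangle and Jensen inequalities,
\begin{multline*}
\frac{\norm*{\nabla R_p(w+\Delta_k) - \nabla R_p(w) - \Exp\brack{\nabla^2\phi(w)}\Delta_k}}{\norm{\Delta_k}} \\
\leq \Exp\brack*{\frac{\norm*{\nabla\phi(w+\Delta_k) - \nabla\phi(w) - \nabla^2\phi(w)\Delta_k}}{\norm{\Delta_k}}} .
\end{multline*}
Since $\phi$ is twice differentiable in $w$, the integrand tends to $0$ pointwise as $k \to \infty$; I will pass the limit inside the expectation by dominated convergence, exactly as in Lemma~\ref{lem:diff}. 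Put $R \defeq \sup_k \norm{\Delta_k} < \infty$. Applying the fundamental theorem of calculus to $t \mapsto \nabla\phi(w + t\Delta_k)$ and using $\norm{M\Delta}\leq\opnorm{M}\norm{\Delta}$, the integrand is at most $\int_0^1 \opnorm{\nabla^2\phi(w + t\Delta_k) - \nabla^2\phi(w)}\dt \leq 2\sup_{\Delta \in B(0, R)}\opnorm{\nabla^2\phi(w+\Delta)}$, uniformly in $k$.

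It remains to dominate $\sup_{\Delta \in B(0, R)}\opnorm{\nabla^2\phi(w+\Delta)} = \sup_{\Delta \in B(0, R)}\abs{\inp{w+\Delta}{X} - Y}^{p-2}\norm{X}^{2}$ by an integrable function. Using $\abs{a+b}^{q} \leq \max\brace{2^{q-1}, 1}(\abs{a}^{q} + \abs{b}^{q})$ with $q = p - 2 \geq 0$ together with $\abs{\inp{\Delta}{X}} \leq R\norm{X}$, this supremum is at most $\max\brace{2^{p-3}, 1}\paren{\abs{\error}^{p-2} + R^{p-2}\norm{X}^{p-2}}\norm{X}^{2} \eqdef g(X, Y)$. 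For integrability, bound $\norm{X}$ by $\sum_{j=1}^{d} \abs{X^j}$; H\"older's inequality with exponents $p/(p-2)$ and $p/2$ gives $\Exp\brack{\abs{\error}^{p-2}\norm{X}^{2}} \leq \Exp\brack{\abs{\error}^{p}}^{(p-2)/p}\Exp\brack{\norm{X}^{p}}^{2/p}$, and both $\Exp\brack{\abs{\error}^{p}} < \infty$ and $\Exp\brack{\norm{X}^{p}} < \infty$ follow from $\Exp\brack{\abs{Y}^{p}} < \infty$ and $\Exp\brack{\abs{X^j}^{p}} < \infty$ for all $j \in [d]$; likewise $R^{p-2}\Exp\brack{\norm{X}^{p}} < \infty$. (At the boundary $p = 2$ one has $\abs{\error}^{p-2}\equiv 1$, only $\Exp\brack{\norm{X}^{2}}<\infty$ is needed, and $\nabla^2\phi \equiv XX^{T}$, so everything degenerates consistently.) Dominated convergence then forces the displayed quotient to vanish along every such $(\Delta_k)$, hence $\nabla R_p$ is differentiable at $w$ with derivative $\Exp\brack{\nabla^2\phi(w)} = \Exp\brack{\nabla^2 \lp(\error)}$; since $w$ was arbitrary, this proves the claim.

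I do not anticipate a genuine obstacle here: the one point requiring care is the construction of the dominating function $g$ and the verification of its integrability, which is exactly where all three moment hypotheses are used (via H\"older with the conjugate pair $(\tfrac{p}{p-2},\tfrac{p}{2})$), together with checking that the estimate behaves well at $p = 2$. The genuinely delicate twice-differentiability question arises only for $p \in (1,2)$, where $\lp''$ is unbounded near $0$ and a stronger non-realizability assumption is needed; that case is not addressed by this lemma.
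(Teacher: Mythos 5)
Your proof is correct and is exactly the argument the paper intends: the paper omits this proof, describing it as "very similar to that of Lemma \ref{lem:diff}," and you supply precisely that argument one derivative higher (Fr\'echet differentiability of $\nabla R_p$ via dominated convergence, with the dominating function built from a supremum over a ball, the inequality $\abs{a+b}^{q}\leq\max\brace{2^{q-1},1}(\abs{a}^{q}+\abs{b}^{q})$, and H\"older with the conjugate pair $(\tfrac{p}{p-2},\tfrac{p}{2})$). The observation that no case split is needed here because $\lp''(t)=\abs{t}^{p-2}$ is continuous and polynomially bounded for $p\geq 2$ — in contrast to the $p\in(1,2)$ lemma — is also the right way to see why this case is the easy one.
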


The case $p \in (1, 2)$ is more complicated. The following lemma establishes the twice differentiability of the risk at its minimizer under a subset of the assumptions of Theorem \ref{thm:pleq2}.
\begin{lemma}
    Let $p \in (1, 2)$. Assume that $\Prob\paren*{\abs{\opterror} = 0} = 0$ and $\Exp\brack{\abs{\opterror}^{p-2}(X^{j})^{2}} < \infty$ for all $j \in [d]$. Then $R_{p}$ is twice differentiable at $\opt$ and
    \begin{equation*}
        \nabla^{2} R_{p}(\opt) = \Exp\brack{\nabla^{2} \lp(\opterror)}
    \end{equation*}
\end{lemma}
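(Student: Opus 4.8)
The plan is to mimic the argument used in the proof of Lemma~\ref{lem:diff}, but now applied at the level of the gradient map $w \mapsto \nabla R_p(w) = \Exp\brack{\nabla \ell_p(\error)}$ and evaluated only at the point $\opt$. Concretely, I would show that
\begin{equation*}
    \lim_{\Delta \to 0} \frac{\norm*{\Exp\brack{\nabla \ell_p(\inp{\opt + \Delta}{X} - Y)} - \Exp\brack{\nabla \ell_p(\opterror)} - \Exp\brack{\nabla^2 \ell_p(\opterror)}\Delta}}{\norm{\Delta}} = 0,
\end{equation*}
where $\nabla R_p$ exists by Lemma~\ref{lem:diff} (the assumption $\Exp\brack{\abs{\opterror}^{p-2}(X^j)^2} < \infty$ together with $\Prob(\abs{\opterror} = 0) = 0$ is stronger than what is needed for first-order differentiability, once one also has $\Exp\brack{\abs{Y}^p}, \Exp\brack{\abs{X^j}^p} < \infty$, which are part of the Theorem~\ref{thm:pleq2} assumptions). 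Pulling the difference quotient inside the expectation via the triangle inequality, it suffices to dominate
\begin{equation*}
    \psi_\Delta(X, Y) \defeq \frac{\norm*{\nabla \ell_p(\inp{\opt + \Delta}{X} - Y) - \nabla \ell_p(\opterror) - \nabla^2\ell_p(\opterror)\Delta}}{\norm{\Delta}}
\end{equation*}
by an integrable function uniformly for $\Delta$ in a ball $B(0,R)$, and then apply the dominated convergence theorem along an arbitrary sequence $\Delta_k \to 0$; pointwise, $\psi_{\Delta_k}(X,Y) \to 0$ for every $(X,Y)$ with $\opterror \neq 0$ (which is almost every $(X,Y)$ by hypothesis) because $t \mapsto \ell_p'(t)$ is differentiable away from $0$.

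The domination step is where the work and the main obstacle lie. Writing $\nabla \ell_p(\inp{w}{X} - Y) = \ell_p'(\error)X$ and $\nabla^2 \ell_p(\opterror) = \ell_p''(\opterror) X X^T$, with $\ell_p'(t) = \sgn(t)\abs{t}^{p-1}/(p-1)$ and $\ell_p''(t) = \abs{t}^{p-2}$, I would use the fundamental theorem of calculus to write $\ell_p'(\inp{\opt+\Delta}{X} - Y) - \ell_p'(\opterror) = \int_0^1 \ell_p''(\opterror + s\inp{\Delta}{X})\inp{\Delta}{X}\ds$, so that
\begin{equation*}
    \psi_\Delta(X,Y) = \frac{\abs{\inp{\Delta}{X}}}{\norm{\Delta}}\,\norm{X}\,\abs*{\int_0^1 \paren*{\ell_p''(\opterror + s\inp{\Delta}{X}) - \ell_p''(\opterror)}\ds} \leq \norm{X}^2 \int_0^1 \abs*{\abs{\opterror + s\inp{\Delta}{X}}^{p-2} - \abs{\opterror}^{p-2}}\ds.
\end{equation*}
Since $p - 2 \in (-1, 0)$, the map $u \mapsto \abs{u}^{p-2}$ is \emph{not} bounded near $0$, so I cannot simply bound the integrand by a constant; instead I would bound $\abs{\opterror + s\inp{\Delta}{X}}^{p-2} \le \abs{\opterror + s\inp{\Delta}{X}}^{p-2}$ crudely and control the resulting singular integral. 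The cleanest route is: for fixed $\Delta$ and $X$ with $\inp{\Delta}{X} \neq 0$, substitute $v = \opterror + s\inp{\Delta}{X}$ to get $\int_0^1 \abs{\opterror + s\inp{\Delta}{X}}^{p-2}\ds = \frac{1}{\abs{\inp{\Delta}{X}}}\int_{\opterror}^{\opterror + \inp{\Delta}{X}} \abs{v}^{p-2}\dv \le \frac{1}{\abs{\inp{\Delta}{X}}}\cdot\frac{2}{p-1}\paren{\abs{\opterror} + \abs{\inp{\Delta}{X}}}^{p-1}$, using $\int_a^b \abs{v}^{p-2}\dv \le \frac{2}{p-1}\max\{\abs{a},\abs{b}\}^{p-1}$. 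Substituting back and using $\abs{\inp{\Delta}{X}} \le R\norm{X}$ together with $(a+b)^{p-1} \le a^{p-1} + b^{p-1}$ for $p-1 \in (0,1)$, this gives a dominating function of the shape $c_p\brack*{\abs{\opterror}^{p-1}\norm{X}/(R') + \abs{\opterror}^{p-2}\norm{X}^2 + R^{p-2}\norm{X}^p}$ after also handling the $\ell_p''(\opterror)\Delta$ term directly (which contributes $\abs{\opterror}^{p-2}\norm{X}^2$).

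Integrability of this dominating function then follows from the Theorem~\ref{thm:pleq2} hypotheses: $\Exp\brack{\abs{\opterror}^{p-2}(X^j)^2} < \infty$ for all $j$ controls the $\abs{\opterror}^{p-2}\norm{X}^2$ term (after expanding $\norm{X}^2 \le d\sum_j (X^j)^2$); $\Exp\brack{\abs{X^j}^p} < \infty$ and $\Exp\brack{\abs{Y}^p} < \infty$ give $\Exp\brack{\abs{\opterror}^{p-1}\norm{X}} < \infty$ via Hölder (exponents $p/(p-1)$ and $p$) and $\Exp\brack{\norm{X}^p} < \infty$ via Jensen; so every term is integrable. Once domination is secured, the dominated convergence theorem lets me pass the limit inside, the pointwise limit is $0$ on the full-measure event $\{\opterror \neq 0\}$, and the Fréchet derivative of $\nabla R_p$ at $\opt$ is therefore $\Exp\brack{\nabla^2\ell_p(\opterror)}$, which is exactly $H_p = \nabla^2 R_p(\opt)$. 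I expect the only genuinely delicate point to be the bookkeeping around the singularity of $\abs{\cdot}^{p-2}$ at the origin — in particular making sure every bound is uniform in $\Delta \in B(0,R)$ and that the substitution argument is valid on the full-measure set where $\opterror \neq 0$ and $\inp{\Delta}{X} \neq 0$ (the latter being a null set only when $\Delta = 0$, by the no-hyperplane assumption on $X$).
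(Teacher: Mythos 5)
Your overall strategy (dominated convergence applied to the difference quotient of the gradient, with pointwise convergence on the full-measure event $\{\opterror \neq 0\}$) is the same as the paper's, but your domination step has a genuine gap. After the substitution you obtain, up to constants, the bound
\begin{equation*}
    \psi_\Delta(X,Y) \;\lesssim\; \norm{X}^2\left[\frac{\paren{\abs{\opterror}+\abs{\inp{\Delta}{X}}}^{p-1}}{(p-1)\abs{\inp{\Delta}{X}}} + \abs{\opterror}^{p-2}\right],
\end{equation*}
and the first bracketed term splits into $\abs{\opterror}^{p-1}/\abs{\inp{\Delta}{X}}$ and $\abs{\inp{\Delta}{X}}^{p-2}$. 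Both of these blow up as $\inp{\Delta}{X} \to 0$, which is exactly the limit you are taking, so neither admits a bound that is uniform over $\Delta \in B(0,R)$. Your proposed dominating function reflects this problem: the term $\abs{\opterror}^{p-1}\norm{X}/R'$ requires a \emph{lower} bound $R'$ on $\abs{\inp{\Delta}{X}}$ that does not exist, and the term $R^{p-2}\norm{X}^p$ is not an upper bound for $\abs{\inp{\Delta}{X}}^{p-2}\norm{X}^2$ at all, since $p-2<0$ makes $t\mapsto t^{p-2}$ decreasing and $\abs{\inp{\Delta}{X}}\le R\norm{X}$ therefore gives $\abs{\inp{\Delta}{X}}^{p-2}\ge (R\norm{X})^{p-2}$, the wrong direction.

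The missing idea is a case split comparing the perturbation to $\abs{\opterror}$, which is how the paper proceeds (it splits on $\norm{\Delta_k} \gtrless \abs{\opterror}/(2\norm{X})$). When $\abs{\inp{\Delta}{X}} \le \abs{\opterror}/2$, the argument of $\ell_p''$ never approaches zero, so the integrand is at most $2^{2-p}\abs{\opterror}^{p-2}$ directly; when $\abs{\inp{\Delta}{X}} > \abs{\opterror}/2$, your own substitution bound gives $\paren{\abs{\opterror}+\abs{\inp{\Delta}{X}}}^{p-1}/\abs{\inp{\Delta}{X}} \le 3^{p-1}\abs{\inp{\Delta}{X}}^{p-2} \le 3^{p-1}\paren{\abs{\opterror}/2}^{p-2}$, again using monotonicity of $t^{p-2}$. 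In both regimes one lands on a dominating function of the form $C_p\,\abs{\opterror}^{p-2}\norm{X}^2$, which is integrable precisely by the lemma's hypothesis $\Exp\brack{\abs{\opterror}^{p-2}(X^j)^2}<\infty$ (no Hölder step or $p$-th moment assumptions needed for this part, unlike in your proposed dominating function). With that repair your FTC/substitution route is a perfectly valid, and arguably slightly slicker, variant of the paper's two-case argument.
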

\begin{proof}
    The difficulty in the proof compared to Lemma \ref{lem:diff} and Lemma \ref{lem:twice_diff_pgeq2} stems from the fact that the loss is not twice differentiable at zero. We still rely on the dominated convergence theorem, but the construction of the dominating function is slightly more intricate. Using the setup of the proof of Lemma \ref{lem:diff}, and following the same line of arguments, we arrive at
    \begin{align}
        &\lim_{k \to \infty} \frac{\norm{\nabla R_{p}(\opt+\Delta_k) - \nabla R_{p}(\opt) - \Exp\brack*{\nabla^{2}\phi(\opt, X, Y)\Delta_k}}}{\norm{\Delta_k}} \nonumber \\
        &\leq \lim_{k \to \infty} \Exp\brack*{\frac{\norm{\nabla \phi(\opt + \Delta_k, X, Y) - \nabla \phi(\opt, X, Y) - \nabla^2 \phi(\opt, X, Y)\Delta_{k}}}{\norm{\Delta_{k}}}}, \label{eq:proof_2}
    \end{align}
    where we have used the fact that since $\Prob\paren*{\abs{\opterror} = 0} = 0$, $\phi(w, X, Y)$ is almost surely twice differentiable at $\opt$. To finish the proof, it remains to construct a dominating function for the above sequence to justify the interchange of the limit and expectation. We consider two cases.

    \textbf{Case 1: $\norm{\Delta_k} \geq \abs*{\opterror}/(2\norm{X}) \eqdef R(X, Y)$}.
    Then we have
    \begin{align*}
        &\frac{\norm{\nabla \phi(\opt + \Delta_k, X, Y) - \nabla \phi(\opt, X, Y) - \nabla^2 \phi(\opt, X, Y)\Delta_{k}}}{\norm{\Delta_{k}}} \\
        &\leq \frac{\norm{\nabla \phi(\opt + \Delta_k, X, Y)} + \norm{\nabla \phi(\opt, X, Y)} + \norm{\nabla^2 \phi(\opt, X, Y)\Delta_{k}}}{\norm{\Delta_{k}}} \\
        &\leq \frac{\paren*{\abs*{\inp{\opt + \Delta}{X} - Y}^{p-1} + \abs*{\inp{\opt}{X} - Y}^{p-1}} \norm{X}}{(p-1)\norm{\Delta_k}} + \norm{\nabla^2 \phi(\opt, X, Y)}_{op} \\
        &\leq \frac{2 \abs*{\opterror}^{p-1} \norm{X}}{(p-1) \norm{\Delta_k}} + \abs*{\opterror}^{p-2} \norm{X}^2 + \frac{\abs{\inp{\Delta_k/\norm{\Delta_k}}{X}}^{p-1} \norm{X}}{(p-1) \norm{\Delta_k}^{2-p}} \\
        &\leq \frac{4 \abs*{\opterror}^{p-2} \norm{X}^2}{(p-1)} + \abs*{\opterror}^{p-2} \norm{X}^2 + \frac{\norm{X}^{p}}{(p-1) \norm{\Delta_k}^{2-p}} \\
        &\leq \frac{7 \abs*{\opterror}^{p-2} \norm{X}^2}{(p-1)}
    \end{align*}
    where the second line follows by triangle inequality, the third by definition of the operator norm, the fourth by $\abs{a+b}^{q} \leq \abs{a}^{q} + \abs{b}^{q}$ valid for $q \in (0, 1)$, and the fifth and sixth by Cauchy-Schwartz inequality and the assumed lower bound on $\norm{\Delta_k}$.

    \textbf{Case 2: $\norm{\Delta_k} < R(X, Y)$}. We start by noting that, for all $\Delta \in B(0, R(X, Y)) \defeq \brace*{x \in \R^{d} \mid \norm{x} < R(X, Y)}$, we have
    \begin{equation*}
        \abs*{\inp{\opt + \Delta}{X} - Y} \geq \abs*{\opterror} - \abs*{\inp{\Delta}{X}} \geq \abs*{\opterror} - \norm{\Delta} \norm{X} > \abs*{\opterror}/2 > 0.
    \end{equation*}
    Therefore $\phi(w, X, Y)$ is twice differentiable on $B(0, R(X, Y))$. Now
    \begin{align*}
        &\frac{\norm{\nabla \phi(\opt + \Delta_k, X, Y) - \nabla \phi(\opt, X, Y) - \nabla^2 \phi(\opt, X, Y)\Delta_{k}}}{\norm{\Delta_{k}}} \\
        &\leq \frac{\norm{\nabla \phi(\opt + \Delta_k, X, Y) - \nabla \phi(\opt, X, Y)} + \norm{\nabla^2 \phi(\opt, X, Y)\Delta_{k}}}{\norm{\Delta_{k}}} \\
        &\leq \frac{\norm*{\paren*{\int_{0}^{1} \nabla^2 \phi(\opt + t\Delta_k, X, Y) dt} \Delta_k}}{\norm{\Delta_k}} + \norm{\nabla^2 \phi(\opt, X, Y)}_{op} \\
        &\leq \norm*{\int_{0}^{1} \nabla^{2} \phi(w + t\Delta_k, X, Y) dt}_{op} + \norm{\nabla^{2} \phi(\opt, X, Y)}_{op} \\
        &\leq \int_{0}^{1} \norm{\nabla^{2} \phi(w + t\Delta_k, X, Y)}_{op} dt + \norm{\nabla^{2} \phi(\opt, X, Y)}_{op} \\
        &\leq 2 \sup_{\Delta \in B(0, R(X, Y))} \norm{\nabla^{2} \phi(\opt + \Delta, X, Y)}_{op} \\
        &\leq 2 \norm{X}_2^{2} \sup_{\Delta \in B(0, R(X, Y))} \abs*{\inp{\opt+\Delta}{X} - Y}^{p-2} \\
        &\leq 4 \abs*{\opterror}^{p-2} \norm{X}_2^2
    \end{align*}
    where the second line follows from the triangle inequality, the third follows from the twice differentiability of $\phi$ on $B(0, R(X, Y))$ and the fundamental theorem of calculus applied component-wise, the fifth by Jensen's inequality, and the last by definition of $R(X, Y)$ and the above lower bound. We therefore define our dominating function by
    \begin{equation*}
        g(X, Y) \defeq 8 \abs*{\opterror}^{p-2} \norm{X}_2^2.
    \end{equation*}
    It is then immediate from our assumptions that $g(X, Y)$ is integrable. Interchanging the limit and the expectation in (\ref{eq:proof_2}) and recalling that $\phi$ is almost surely twice differentiable at $\opt$ finishes the proof.

\end{proof}

\section{Proof of Lemma \ref{lem:smallball}}
\label{sec:small_ball}
We start with the claim that $\rho_0$ is upper-semicontinuous. We want to show that for any $w \in \R^{d}$ and any sequence $(w_{k})_{k=1}^{\infty}$ converging to $w$ (in the norm topology)
\begin{equation*}
    \limsup_{k \to \infty} \rho_{0}(w_{k}) \leq \rho_{0}(w).
\end{equation*}
Fix a $w \in \R^{d}$ and let $(w_k)_{k=1}^{\infty}$ be a sequence in $\R^{d}$ satisfying $\lim_{k \to \infty} \norm{w - w_k} = 0$, where for convenience we take $\norm{\cdot}$ to be the Euclidean norm on $\R^{d}$. Then we have by (reverse) Fatou's Lemma
\begin{equation}
\label{eq:ineq11}
    \limsup_{k \to \infty} \rho_{0}(w_k) = \limsup_{k \to \infty} \Exp\brack*{\mathbbm{1}_{\brace{0}}(\inp{w_k}{X})} \\
    \leq \Exp\brack*{\limsup_{k \to \infty} \mathbbm{1}_{\brace{0}}(\inp{w_k}{X})}.
\end{equation}
Now we bound the inner limsup pointwise. We split this task in two cases. If $\inp{w}{X} = 0$, then
\begin{equation}
\label{eq:ineq12}
    \limsup_{k \to \infty} \mathbbm{1}_{\brace{0}}(\inp{w_k}{X}) \leq 1 = \mathbbm{1}_{\brace{0}}(\inp{w}{X}).
\end{equation}
Otherwise we have $\delta \defeq \abs{\inp{w}{X}} > 0$. But then, by the convergence of $(w_k)_{k=1}^{\infty}$ to $w$, there exists a $K \in \N$ such that for all $k \geq K$ we have $\norm{w_k - w} < \delta/(2\norm{X})$. This implies that for all $k \geq K$
\begin{equation*}
    \abs{\inp{w_{k}}{X}} = \abs{\inp{w}{X} - \inp{w - w_{k}}{X}} \geq \abs{\inp{w}{X}} - \abs{\inp{w - w_k}{X}} \geq \delta - \norm{w_k - w}_{2} \norm{X} \geq \delta/2 > 0.
\end{equation*}
We conclude that
\begin{equation}
\label{eq:ineq13}
    \limsup_{k \to \infty} \mathbbm{1}_{\brace{0}}(\inp{w_k}{X}) = \lim_{k \to \infty} \mathbbm{1}_{\brace{0}}(\inp{w_k}{X}) = 0 = \mathbbm{1}_{\brace{0}}(\inp{w}{X}).
\end{equation}
Combining (\ref{eq:ineq11}), (\ref{eq:ineq12}), and (\ref{eq:ineq13}) proves the upper-semicontinuity of $\rho_{0}$. Essentially the same proof shows the lower-semicontinuity of $\rho_{q}(\cdot, \kappa)$ for any $\kappa \geq 0$; we omit it here.

For the remaining claims, first notice that the function $\rho_{0}$ is scale invariant, i.e. for all $w \in \R^{d}$ and all $c \in \R$, we have $\rho_{0}(cw) = \rho_{0}(w)$. Therefore
\begin{equation*}
    \sup_{w \in \R^{d} \setminus \brace{0}} \rho_{0}(w) = \sup_{w \in \S^{d-1}} \rho_{0}(w),
\end{equation*}
where $\S^{d-1}$ is the Euclidean unit sphere. By assumption on the random vector $X$, we know that $\rho_{0}(w) < 1$ for all $w \in \S^{d-1}$. Furthermore since $\rho_{0}$ is upper semicontinuous, and $\S^{d-1}$ is compact, $\rho_{0}$ attains its supremum on $\S^{d-1}$ at some point $w_{0} \in \S^{d-1}$. From this we conclude that
\begin{equation*}
    \rho = \sup_{w \in \R^{d} \setminus \brace{0}} \rho_{0}(w) = \rho_{0}(w_0) < 1.
\end{equation*}

Finally, we turn to the claim about $\rho_{q}$. Since $\Exp\brack{\abs{X^j}^{q}} < \infty$, the function $\norm{\cdot}_{L^{q}}$ is a norm on $\R^{d}$, from which it follows that $\rho_{q}(w, \kappa)$ is also scale invariant for any $\kappa$. Therefore
\begin{equation*}
    \inf_{w \in \R^{d} \setminus \brace{0}} \rho_{q}(w, \kappa) = \inf_{w \in S_{q}} \rho_{q}(w, \kappa),
\end{equation*}
where $S_{q}$ is the unit sphere of the norm $\norm{\cdot}_{L^{q}}$. Now fix $\kappa \in [0, 1)$. We claim that $\rho_{q}(w, \kappa) > 0$ for all $w \in S_{q}$. Suppose not. Then there exists a $w \in S_{q}$ such that $\abs{\inp{w}{X}} \leq \kappa$ with probability $1$, but then we get the contradiction
\begin{equation*}
    1 = \norm{w}_{L^{q}} = \Exp\brack{\abs{\inp{w}{X}}^{q}}^{1/q} \leq \kappa < 1.
\end{equation*}
therefore $\rho_{q}(w, \kappa) > 0$ for all $w \in S_{q}$. Now since $\rho_{q}(\cdot, \kappa)$ is lower-semicontinuous, and $S_{q}$ is compact,  $\rho_{q}(\cdot, \kappa)$ attains its infimum on $S_{q}$ at some point $w_{q} \in S_{q}$. From this we conclude
\begin{equation*}
    \inf_{w \in \R^{d} \setminus \brace{0}}\rho_{q}(w, \kappa) = \rho_{q}(w_q, \kappa) > 0. 
\end{equation*}
\qed

\section{Proof of Theorem \ref{thm:realizable}}
\label{sec:realizable}
Fix $p \in (1, \infty)$, and let $\hat{w} \defeq \hat{w}_{p}$. Our goal will be to upper bound the probability $\Prob\paren*{\hat{w} \neq w^{*}}$. By assumption, we have that $Y = \inp{w^{*}}{X}$, so that $Y_i = \inp{w^{*}}{X_{i}}$ for all $i \in [n]$. Since $\hat{w}$ minimizes the empirical risk, we must also have $\inp{\hat{w}}{X_i} = Y_i = \inp{w^{*}}{X_i}$ for all $i \in [n]$. Let $A \in \R^{n \times d}$ denote the matrix whose $i$-th row is $X_i$. Then we have the following implications.
\begin{equation}
\label{eq:imp1}
    \hat{w} \neq w^{*} \Rightarrow \inp{\hat{w} - w^{*}}{X_i} = 0  \,\, \forall i \in [n]
    \Rightarrow \exists w \in \R^{d} \setminus \brace{0} \mid A w = 0
    \Leftrightarrow \rowrank(A) < d.
\end{equation}
Let $r \defeq \rowrank(A)$. We claim the following equivalence
\begin{equation}
\label{eq:imp2}
    \rowrank(A) < d \Leftrightarrow \exists S \subset [n] \mid \abs{S} = d-1 \land \forall i \in [n] \setminus S \,\, X_i \in \Span\paren*{\brace*{X_k \mid k \in S}}.
\end{equation}
Indeed the implication $(\Leftarrow)$ follows by definition of the rowrank of $A$. For the implication $(\Rightarrow)$, by definition, $\brace*{X_i \mid i \in [n]}$ is a spanning set for the row space of $A$, therefore it can be reduced to a basis of it $\brace*{X_{k} \mid k \in S_1}$ for some indices $S_1 \subset [n]$ with $\abs{S_1} = r$. If $r = d-1$, then the choice $S = S_1$ satisfies the right side of (\ref{eq:imp2}). Otherwise, let $S_2 \subset [n] \setminus S_1$ with $\abs{S_2} = d-1-r$. Such a subset exists since by assumption $n \geq d > d-1$. Then the set $S \defeq S_1 \cup S_2$ satisfies the right side of (\ref{eq:imp2}). Combining (\ref{eq:imp1}) and (\ref{eq:imp2}) we arrive at:
\begin{align}
    \Prob\paren*{\hat{w} \neq w^{*}} &\leq \Prob\paren*{\bigcup_{\substack{S \subset [n] \\ \abs{S} = d-1}} \brace*{\forall i \in [n] \setminus S \,\, X_i \in \Span\paren*{\brace*{X_k \mid k \in S}}}} \nonumber \\
    &\leq \sum_{\substack{S \subset [n] \\ \abs{S} = d-1}} \Prob\paren*{\forall i \in [n] \setminus S \,\, X_i \in \Span\paren*{\brace*{X_k \mid k \in S}}} \label{eq:proof_3}
\end{align}
where the second inequality follows from the union bound. We now bound each of the terms of the sum. Fix $S = \brace*{i_1, \dotsc, i_{d-1}} \subset [n]$ with $\abs{S} = d-1$. Let $Z_{S} = n((X_{i_j})_{j=1}^{d-1})$ be a non-zero vector orthogonal to $\Span\paren*{\brace*{X_k \mid k \in S}}$. Such a vector must exist since $\dim(\Span\paren*{\brace*{X_k \mid k \in S}}) < d$; see Lemma \ref{lem:exp_const} below for an explicit construction of the function $n$. Denote by $P_{Z_{S}}$ the distribution of $Z_{S}$ and $P_{(X_i)_{i \in [n] \setminus S}} = \prod_{i=1}^{n - d - 1} P_{X}$ the distribution of $(X_i)_{i \in [n] \setminus S}$, where $P_{X}$ is the distribution of $X$. Note that since $Z_{S}$ is a function of $(X_{i_j})_{j=1}^{d}$ only, it is independent of $(X_i)_{i \in [n] \setminus S}$. In particular, the joint distribution of $(Z_{S}, (X_i)_{i \in [n] \setminus S})$ is given by the product $P_{Z_{S}} \times P_{(X_i)_{i \in [n] \setminus S}}$. Now if $X_{i} \in \Span\paren*{\brace*{X_k \mid k \in S}}$, then by definition of $Z_{S}$, $\inp{Z_{S}}{X_i} = 0$. Therefore
\begin{align}
    &\Prob\paren*{\forall i \in [n] \setminus S \,\, X_i \in \Span\paren*{\brace*{X_k \mid k \in S}}} \nonumber \leq \Prob\paren*{\forall i \in [n] \setminus S \,\, \inp{Z_{S}}{X_i} = 0} \nonumber\\
    &=\Exp\brack*{\prod_{i \in [n] \setminus S} \mathbbm{1}_{\brace*{0}}(\inp{Z_S}{X_i})} \nonumber\\
    &= \int \brace*{\prod_{i \in [n] \setminus S} \mathbbm{1}_{\brace*{0}}(\inp{y_{S}}{x_i})} P_{Z_{S}}(dz_{S}) P_{(X_i)_{i \in [n] \setminus S}}(d (x_i)_{i \in [n] \setminus S}) \nonumber\\
    &= \int P_{Z_{S}}(dy_s) \brace*{\prod_{i \in [n] \setminus S} \int \mathbbm{1}_{\brace*{0}}(\inp{y_S}{x_i}) P_{X}(dx_i)} \nonumber\\
    &= \int \brace*{\prod_{i \in [n] \setminus S} \Prob\paren*{\inp{z_{S}}{X} = 0}} P_{Z_{S}}(dy_s) \nonumber \\
    &= \int \brace*{\prod_{i \in [n] \setminus S} \rho_{0}(y_s)} P_{Z_{S}} (dy_s) \nonumber \\
    &\leq \rho^{n-d+1} \label{eq:proof_4},
\end{align}
where in the third line we used the independence of $Z_{S}$ and $(X_i)_{i \in [n] \setminus S}$, in the fourth we used the independence of the $(X_i)_{i \in \setminus S}$, in the sixth we used the definition of $\rho_{0}$ in \ref{def:smallball}, and in the last line we used the fact that $z_{S} \neq 0$ and the definition of $\rho$ in Lemma \ref{lem:smallball}. Combining the inequalities (\ref{eq:proof_3}) and (\ref{eq:proof_4}) yields the result. \qed

\begin{lemma}
\label{lem:exp_const}
    Let $m \in \brace*{1, \dotsc, d-1}$ and let $(x_j)_{j=1}^{m}$ be a sequence of points in $\R^{d}$. Denote by $A \in \R^{m \times d}$ the matrix whose $j$-th row is $x_j$ and let $A^{+}$ be its pseudo-inverse. Let $(b_i)_{i=1}^{d}$ be an ordered basis of $\R^{d}$, and define
    \begin{align*}
        &k \defeq \min\brace*{i \in [n] \mid (I - A^{+}A)b_i \neq 0} \\
        &n((x_j)_{j=1}^{m}) \defeq (I - A^{+}A)b_k
    \end{align*}
    Then $n((x_j)_{j=1}^{m})$ is non-zero and is orthogonal to $\Span\paren*{\brace*{x_j \mid j \in [m]}}$.
\end{lemma}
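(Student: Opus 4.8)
The proof is a short exercise in the properties of the Moore--Penrose pseudo-inverse, so I do not anticipate any real obstacle; the main point to get right is simply why the index $k$ exists. The plan is as follows. Write $P \defeq A^{+}A$ and recall two standard facts: $P$ is the orthogonal projection of $\R^{d}$ onto the row space of $A$, which is exactly $\Span(\{x_j \mid j \in [m]\})$; and the identity $AA^{+}A = A$ holds, so that $I - P$ is the orthogonal projection onto $\Span(\{x_j \mid j \in [m]\})^{\perp}$.

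First I would argue that $k$ is well defined and that $n((x_j)_{j=1}^{m}) \ne 0$. Since $A$ has only $m \le d-1$ rows, $\rank{A} \le d-1 < d$, so $I - P$ has rank $d - \rank{A} \ge 1$ and in particular is not the zero map. As $(b_i)_{i=1}^{d}$ is a basis of $\R^{d}$, we cannot have $(I-P)b_i = 0$ for every $i$, for otherwise $I-P$ would vanish on a spanning set and hence be identically zero. Thus $\{i : (I-P)b_i \ne 0\}$ is nonempty, the minimum $k$ is well defined, and $n((x_j)_{j=1}^{m}) = (I-P)b_k \ne 0$ by the choice of $k$.

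Second, for the orthogonality claim I would compute $A(I-P) = A - AA^{+}A = A - A = 0$ using the Moore--Penrose identity, so that $A\, n((x_j)_{j=1}^{m}) = A(I-P)b_k = 0$. Since the $j$-th entry of the vector $A\, n((x_j)_{j=1}^{m})$ is $\inp{x_j}{n((x_j)_{j=1}^{m})}$, this yields $\inp{x_j}{n((x_j)_{j=1}^{m})} = 0$ for every $j \in [m]$, i.e.\ $n((x_j)_{j=1}^{m})$ is orthogonal to $\Span(\{x_j \mid j \in [m]\})$, as claimed. The hypothesis $m \le d-1$ is used only to guarantee $I - P \ne 0$ in the first step; the rest is purely formal. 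An alternative, pseudo-inverse-free route would just take any nonzero vector in $\Span(\{x_j\})^{\perp}$, which is nontrivial by the same dimension count, but the explicit formula above is what matters for the proof of Theorem~\ref{thm:realizable}: it exhibits $n$ as a (Borel) measurable function of $(x_j)_{j=1}^{m}$, which is exactly what the independence argument there requires.
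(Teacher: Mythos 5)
Your proof is correct and follows essentially the same route as the paper's: both establish that $k$ is well defined by noting $I - A^{+}A$ projects onto the nontrivial kernel of $A$ and cannot vanish on an entire basis, and both derive orthogonality from the Moore--Penrose identity $AA^{+}A = A$. Your phrasing of the orthogonality step via $A(I-P)=0$ is just a compact repackaging of the paper's computation with $v = A^{T}c$, and your closing remark about measurability correctly identifies why the explicit construction is needed.
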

\begin{proof}
    We start by showing that $k$ is well defined. First note that $I - A^{+}A$ is the orthogonal projector onto the kernel of $A$, which is non-trivial since $\dim(\ker(A)) = d - \dim(\Ima(A)) \geq d - m \geq 1$. Now we claim that there exists an $i \in [d]$ such that $(I - A^{+}A)b_i \neq 0$. Suppose not, then for any $w \in \R^{d}$, we have $(I - A^{+}A)w = (I - A^{+}A) (\sum_{i=1}^{d} c_i b_i) = \sum_{i=1}^{d} c_i (I - A^{+}A)b_i = 0$, implying that $I - A^{+}A = 0$, but this contradicts the non-triviality of $\ker(A)$. This proves that $k$ is well-defined, which in turn proves that $n((x_j)_{j=1}^{m}) \neq 0$. It remains to prove the orthogonality claim. Let $v \in \Span\paren*{\brace*{x_j \mid j \in [m]}}$. Then there exists coefficients $c \in \R^{m}$ such that $v = A^{T}c$. Therefore
    \begin{equation*}
        \inp{v}{n((x_j)_{j=1}^{m})} = \inp{A^{T}c}{n((x_j)_{j=1}^{m})} = c^{T} A (I - AA^{+}) b_k = 0, 
    \end{equation*}
    where the last equality holds since $(I - AA^{+})b_k \in \ker(A)$.    
\end{proof}

\section{Missing proofs for Theorem \ref{thm:peq2}}
\label{sec:peq2}
This section contains the proofs of Lemma \ref{lem:highprob1} and Corollary \ref{cor:high_prob_2} we used in the proof of Theorem \ref{thm:peq2}.
\subsection{Proof of Lemma \ref{lem:highprob1}}
We compute the expectation:
\begin{align*}
    \Exp\brack*{\invhessnorm{\grademprisk{\opt}}^{2}} &= \Exp\brack*{\invhessnorm{n^{-1} \gradlossi}^{2}} \\
    &= n^{-2} \sum_{i=1}^{n} \Exp\brack*{\invhessnorm{\gradlossi}^{2}} \\
    &\quad + 2n^{-2} \sum_{i=1}^{n}\sum_{j=1}^{i-1} \inp{\Exp\brack{\gradlossi}}{\Exp\brack{\gradlossj}}_{H_{p}^{-1}} \\
    &= n^{-1} \deff
\end{align*}
where in the second line we expanded the inner product of the sums into its $n^2$ terms, used linearity of expectation, and used the independence of the samples to take the expectation inside the inner product. In the last line, we used the fact that the samples are identically distributed to simplify the first term. For the second term, we used the fact that the expectation of the gradient of the loss at the risk minimizer vanishes. Applying Markov's inequality finishes the proof. \qed
\subsection{Proof of Corollary \ref{cor:high_prob_2}}
We have
\begin{align*}
    \norm{w - w_{2}^{*}}_{H_{2, n}}^{2} &= (w - w_{2}^{*})^{T} H_{2, n} (w - w_{2}^{*}) \\ 
    &= \frac{1}{n}\sum_{i=1}^{n} (w - w_{2}^{*})^{T} \nabla^2 \ell_{p}(\inp{w_{2}^{*}}{X_i} - Y_i) (w - w_{2}^{*}) \\
    &= \frac{1}{n} \sum_{i=1}^{n} \inp{w - w_{2}^{*}}{X_i}^{2}.
\end{align*}
Now by assumption, the components of the vector $X$ have finite fourth moment so that applying Proposition \ref{prop:lower_tail} and using the condition on $n$ yields the result. \qed

\section{Detailed proof of Theorem \ref{thm:pgeq2}}
\label{sec:pgeq2}
This section contains the proof of Lemma \ref{lem:boundsgeq2} as well as that of Theorem \ref{thm:pgeq2}.
\subsection{Proof of Lemma \ref{lem:boundsgeq2}}
By Lemma 2.5 in \cite{adilFastAlgorithmsEll2022}, we have for all $t, s \in \R$
\begin{equation*}
    \ell_{p}(t) - \ell_{p}(s) - \ell_{p}'(s)(t - s) \geq \frac{1}{8(p-1)}\ell_{p}''(s)(t - s)^2.
\end{equation*}
Recall that by the chain rule
\begin{align*}
    \nabla \ell_{p}(\inp{w}{X} - Y) &= \lp'(\error) X & \nabla^{2} \lp(\error) =  \lp''(\error)XX^{T}.
\end{align*}
Replacing $t$ and $s$ by $\errori$ and $\opterrori$ respectively, and using the formulas for the gradient and Hessian we arrive at
\begin{multline*}
    \lp(\errori) - \lp(\opterrori) \geq \frac{1}{8(p-1)} (w - \opt)^{T} \nabla^2 \lp(\opterrori) (w - \opt) \\ + \inp{\nabla \ell_{p}(\opterrori)}{w - \opt}
\end{multline*}
Averaging over $i \in [n]$ yields the first inequality. The proof of the second inequality proceeds in the same way and uses instead the upper bound of Lemma 2.5 in \cite{adilFastAlgorithmsEll2022}. We omit it here. \qed

\subsection{Proof of Theorem \ref{thm:pgeq2}}

We proceed similarly to the proof of Theorem \ref{thm:peq2}. By definition of the empirical risk minimizer, we have the upper bound
\begin{equation}
\label{eq:ineq41}
    R_{p, n}(\erm) - R_{p, n}(\opt) \leq 0.
\end{equation}
Using (\ref{eq:lowerboundpgeq2}) from Lemma \ref{lem:boundsgeq2} and the Cauchy-Schwarz inequality, we obtain the pointwise lower bound
\begin{equation}
\label{eq:ineq42}
    R_{p, n}(\erm) - R_{p, n}(\opt) \geq - \invhessnorm{\grademprisk{\opt}} \hessnorm{\erm - \opt} + \frac{1}{8(p-1)} \norm{\erm - \opt}^{2}_{H_{p, n}}.
\end{equation}
Using Lemma \ref{lem:highprob1} we have that, with probability at least $1 - \delta/2$,
\begin{equation}
\label{eq:ineq43}
    \invhessnorm{\grademprisk{\opt}} \leq \sqrt{2\deff/(n\delta)}.
\end{equation}
It remains to control $\norm{\erm - \opt}^{2}_{H_{p, n}}$ from below. Define the random vector
\begin{equation*}
    Z = \abs{\opterror}^{(p-2)/2} X
\end{equation*}
Then, for any $w \in \R^{d}$, we have
\begin{align*}
    \emphessnorm{w - \opt}^{2} &= (w - \opt)^{T} H_{p,n} (w - \opt) \\
    &= \frac{1}{n} \sum_{i=1}^{n} (w - \opt)^{T} \nabla^2 \lp(\opterrori) (w - \opt) \\
    &=\frac{1}{n} \sum_{i=1}^{n} \inp{w - \opt}{Z_i}^{2}
\end{align*}
By assumption, the components of the random vector $Z$ have finite fourth moment. Applying Proposition \ref{prop:lower_tail}, and using the condition on $n$ assumed in the statement of Theorem \ref{thm:pgeq2}, we get that, with probability at least $1 - \delta/2$, for all $w \in \R^{d}$,
\begin{equation}
\label{eq:ineq44}
    \emphessnorm{w - \opt}^{2} \geq \frac{1}{2} \hessnorm{w - \opt}^{2}.
\end{equation}
Combining (\ref{eq:ineq43}) and (\ref{eq:ineq44}) with (\ref{eq:ineq42}) gives that with probability at least $1 - \delta$,
\begin{multline}
    \label{eq:ineq45}
    R_{p, n}(\erm) - R_{p, n}(\opt) \geq - \sqrt{2\deff/(n\delta)} \, \hessnorm{\erm - \opt} \\+ \frac{1}{16(p-1)} \hessnorm{\erm - \opt}^{2}.
\end{multline}
Further combining (\ref{eq:ineq45}) with (\ref{eq:ineq41}) and rearranging yields that with probability at least $1 - \delta$
\begin{equation}
    \label{eq:ineq46}
    \hessnorm{\erm - \opt}^{2} \leq \frac{512 p^{2} \deff}{n\delta}
\end{equation}
The last step is to bound the excess risk of the empirical risk minimizer using the bound (\ref{eq:ineq46}) and (\ref{eq:upperboundpgeq2}) from Lemma \ref{lem:boundsgeq2}. For that, we control the $L^{p}$ norm term in (\ref{eq:upperboundpgeq2}) as follows
\begin{align}
    p^{p} \norm{\erm - \opt}_{L^{p}}^{p} &= \paren*{p^{2} \frac{\norm{\erm - \opt}_{L^{p}}^{2}}{\hessnorm{\erm - \opt}^{2}} \hessnorm{\erm - \opt}^{2}}^{p/2} \nonumber \\
    &\leq \paren*{p^{2} \sup_{w \in \R^{d} \setminus \brace{0}} \brace*{\frac{\norm{w}_{L^{p}}^{2}}{\hessnorm{w}^{2}}} \hessnorm{\erm - \opt}^{2}}^{p/2} \nonumber \\
    &= \paren*{p^{2} c_{p}^{2} \hessnorm{\erm - \opt}^{2}}^{p/2}. \label{eq:ineq47}
\end{align}
Combining (\ref{eq:ineq46}), (\ref{eq:upperboundpgeq2}), and (\ref{eq:ineq47}) yields the result. \qed

\section{Detailed proof of Theorem \ref{thm:pleq2}}
\label{sec:pleq2}
This section contains the proof of Lemma \ref{lem:boundsleq2} as well as that of Theorem \ref{thm:pleq2}.
\subsection{Proof of Lemma \ref{lem:boundsleq2}}
Both inequalities follow from Lemma 4.5 in \citet{adilIterativeRefinementPnorm2019}. (\ref{eq:lowerboundpleq2}) follows from a straightforward calculation using the lower bound of Lemma 4.5 in \citet{adilIterativeRefinementPnorm2019}; we omit it here. The upper bound requires a bit more work. We have by the quoted Lemma
\begin{equation*}
    \lp(t) - \lp(s) - \lp'(s)(t - s) \leq \frac{4}{p (p - 1)} \gamma_{p}(\abs{s}, \abs{t - s}).
\end{equation*}
Now assume that $\abs{s} > 0$. If $\abs{t - s} \leq \abs{s}$, we have
\begin{equation*}
    \gamma_{p}(\abs{s}, \abs{t - s}) = \frac{p}{2} \abs{s}^{p-2}{(t-s)^{2}} \leq \abs{s}^{p-2} (t - s)^{2} = \lp''(s)(t - s)^{2}.
\end{equation*}
Otherwise, if $\abs{t - s} > \abs{s}$, then we have
\begin{equation*}
    \gamma_{p}(\abs{s}, \abs{t - s}) = \abs{t - s}^{p} - (1 - p/2)\abs{s}^{p} \leq (t - s)^{2} \abs{t - s}^{p - 2} \leq \abs{s}^{p-2} (t - s)^{2} = \lp''(s)(t-s)^{2}.
\end{equation*}
Therefore in both cases we have $\gamma_{p}(\abs{s}, \abs{t-s}) \leq \lp''(s)(t - s)^{2}$ as long as $\abs{s} > 0$. Replacing $t$ and $s$ by $\error$ and $\opterror$ respectively we get, on the event that $\opterror \neq 0$
\begin{equation*}
    \lp(\error) - \lp(\opterror) - \inp{\gradloss}{w - \opt} \leq \frac{4}{p(p-1)}\norm{w - \opt}_{\hessloss}
\end{equation*}
Recalling that by assumption $\Prob\paren*{\opterror \neq 0} = 1$, taking expectation of both sides, and bounding $1/p \leq 1$ finishes the proof of (\ref{eq:upperboundleq2}). \qed

\subsection{Proof of Theorem \ref{thm:pleq2}}
We follow the same proof strategy as the one used in the proofs of Theorems \ref{thm:peq2} and \ref{thm:pgeq2}. By definition of the empirical risk minimizer, we have
\begin{equation}
\label{eq:ineq51}
    R_{p, n}(\erm) - R_{p, n}(\opt) \leq 0.
\end{equation}
Using (\ref{eq:lowerboundpleq2}) from Lemma \ref{lem:boundsleq2} and the Cauchy-Schwarz inequality, we have the lower bound
\begin{multline}
\label{eq:ineq52}
    R_{p, n}(\erm) - R_{p, n}(\opt) \geq - \invhessnorm{\grademprisk{\opt}} \hessnorm{\erm - \opt} \\
    + \frac{1}{4p^{2}} \frac{1}{n} \sum_{i=1}^{n} \gamma_{p}\paren*{\abs{\opterrori}, \abs{\inp{w - \opt}{X_i}}}
\end{multline}
Using Lemma \ref{lem:highprob1} we have that, with probability at least $1 - \delta/2$,
\begin{equation}
\label{eq:ineq53}
    \invhessnorm{\grademprisk{\opt}} \leq \sqrt{2\deff/(n\delta)}.
\end{equation}
On the other hand, by Proposition \ref{prop:lower_bound_pleq2}, we have with probability $1 - \delta/2$, for all $w \in \R^{d}$,
\begin{equation}
\label{eq:ineq54}
    \frac{1}{n} \sum_{i=1}^{n} \gamma_{p}\paren*{\abs{\opterrori}, \abs{\inp{w - \opt}{X_i}}} \geq \frac{p}{8}\min\brace*{\hessnorm{w - \opt}^{2}, \eps^{2-p}\hessnorm{w - \opt}^{p}},
\end{equation}
where $\eps$ is as defined in Proposition \ref{prop:lower_bound_pleq2}. We now consider two cases. If $\hessnorm{\erm - \opt}^{2} \leq \eps^{2-p}\hessnorm{\erm - \opt}^{p}$, then combining (\ref{eq:ineq51}), (\ref{eq:ineq52}), (\ref{eq:ineq53}), and (\ref{eq:ineq54}) gives
\begin{equation}
\label{eq:ineq55}
    \hessnorm{\erm - \opt}^{2} \leq \frac{8192 \deff}{n\delta}.
\end{equation}
Otherwise, $\hessnorm{\erm - \opt}^{2} > \eps^{2-p}\hessnorm{\erm - \opt}^{p}$, then again combining (\ref{eq:ineq51}), (\ref{eq:ineq52}), (\ref{eq:ineq53}), and (\ref{eq:ineq54}) gives
\begin{equation}
\label{eq:ineq56}
    \hessnorm{\erm - \opt}^{2} \leq \paren*{\frac{8192 \deff \eps^{2(p-2)}}{n\delta}}^{1/(p-1)}
\end{equation}
In either case, we have, using (\ref{eq:ineq55}) and (\ref{eq:ineq56}), with probability at least $1-\delta$,
\begin{multline*}
        \hessnorm{\erm - \opt}^{2} \leq \frac{8192 \deff}{n\delta} \\ + \paren*{\frac{8192 \deff \eps^{2(p-2)}}{n\delta}}^{1/(p-1)}.
\end{multline*}
Combining this last inequality with (\ref{eq:upperboundleq2}) from Lemma \ref{lem:boundsleq2} finishes the proof. \qed

\end{document}